\DeclareMathAlphabet{\mathpzc}{OT1}{pzc}{m}{it}
\newtheorem{theorem}{Theorem}[section]
\newtheorem*{theorem*}{Theorem}
\newtheorem{theorem-non}{Theorem}
\newtheorem{proposition}[theorem]{Proposition}
\newtheorem*{lemma*}{Lemma}
\newtheorem{corollary}[theorem]{Corollary}
\newtheorem{conjecture}[theorem]{Conjecture}
\newtheorem*{conjecture*}{Conjecture}
\theoremstyle{definition}
\newtheorem{definition}[theorem]{Definition}
\newtheorem{example}[theorem]{Example}
\theoremstyle{remark}
\newtheorem{remark}[theorem]{Remark}
\DeclareMathOperator{\im}{im}
\numberwithin{equation}{section}
\begin{document}
\title[Homogeneous contact manifolds and resolutions of Calabi-Yau cones]{Homogeneous contact manifolds and resolutions of Calabi-Yau cones}

\author{Eder M. Correa}
\address{\resizebox{12cm}{.2cm}{ \textit{IMPA \ - \ Instituto de Matem\'{a}tica Pura e Aplicada,  Estr. Dona Castorina, 110, Rio de Janeiro, 22460-320, Brasil}} }

\thanks{Eder M. Correa was supported by CNPq grant 150899/2017-3}

\thanks{ E-mail: \rm edermoraes@impa.br}


\begin{abstract} 
In the present work we provide a constructive method to describe contact structures on compact homogeneous contact manifolds. The main feature of our approach is to describe the Cartan-Ehresmann connection (gauge field) for principal ${\rm{U}}(1)$-bundles over complex flag manifolds by using elements of representation theory of simple Lie algebras. This description allows us to compute explicitly the expression of the contact form for any Boothby-Wang fibration over complex flag manifolds \cite{BW} as well as their underlying Sasaki structures. By following \cite{COLON}, \cite{RESOLUTIONCOMPSUP}, and \cite{GOTO}, as an application of our results we use the Cartan-Remmert reduction \cite{GRAUERT} and the Calabi Ansatz technique \cite{CALABIANSATZ} to provide many explicit examples of crepant resolutions of Calabi-Yau cones with certain homogeneous Sasaki-Einstein manifolds realized as links of isolated singularities. These concrete examples illustrate the existence part of the conjecture introduced in \cite{ADSCFT}.

\end{abstract}

\maketitle

\hypersetup{linkcolor=black}
\tableofcontents

\hypersetup{linkcolor=black}

\section{Introduction}

\subsection{An overview on contact geometry}
As stated in \cite{GEIGES}, the roots of Contact Geometry can be traced back to 1872, when Sophus Lie introduced the notion of contact transformation \cite{SOPHUS} as a geometric tool to study systems of differential equations. The subject has manifold connections with other fields of pure mathematics, and a significant place in applied areas such as mechanics, optics, thermodynamics, and control theory. 

According to \cite{GEIGESII}, the study of contact manifolds in the modern sense can be traced back to the work of Georges Reeb \cite{REEB}, who referred to a strict contact manifold $(M,\eta)$ as a ``syst\`{e}me dynamique avec invariant int\'{e}gral de Monsieur Elie Cartan". The relation with dynamical systems comes from the fact that a contact form $\eta$ gives rise to a vector field $\xi$ defined uniquely by the equations

\begin{center}

$d\eta (\xi, \cdot) = 0,$ \ \ $\eta(\xi) = 1.$

\end{center}
This vector field is nowadays called the {\textit{Reeb vector field}} of $\eta$, see for instance \cite{GEIGES}, \cite{GEIGESII}, \cite{LUTZ}.

Since its foundation, contact geometry has been seen to underlie many physical phenomena and be related to many other mathematical structures. An important feature of contact geometry is its connection with symplectic geometry. Actually, given a contact manifold $(M,\eta)$, it is straightforward to check that the cone 
\begin{equation}
\label{conesymplectic}
\big (\mathscr{C}(M) = \mathbb{R}^{+} \times M, \omega_{\mathscr{C}} = \frac{1}{2}d(r^{2}\eta) \big),
\end{equation}
is a symplectic manifold, also known as {\textit{symplectization}} of $(M,\eta)$, see for instance \cite{MCDUFF}. Likewise, the Reeb field $\xi$ associated to $\eta$ defines a foliation $\mathcal{F}_{\eta}$ on $M$ called {\textit{characracteristic foliation}}. When this foliation is regular and $M$ is compact, the transverse space is a smooth symplectic manifold $(N,\omega_{N})$ giving a projection $\pi$ over the space of leaves $N = M/\mathcal{F}_{\eta}$ called Boothby-Wang fibration \cite{BW}, such that $\pi^{\ast}\omega_{N} = \frac{1}{2}d\eta$. In this last case, we have that $\pi \colon (M,\eta) \to (N,\omega_{N})$ defines a principal ${\rm{U}}(1)$-bundle over $(N,\omega_{N})$ and $\eta$ induces a connection 1-form on $M$. The following diagram illustrate how symplectic geometry arises from contact geometry through of these two different perspectives.

\begin{center}

\begin{tikzcd}
(\mathscr{C}(M),\omega_{\mathscr{C}}) \arrow[r, "\iota", hookleftarrow ] & (M,\eta) \arrow[d, "\pi", twoheadrightarrow] \\
                                                                & (N,\omega_{N}) 
\end{tikzcd}

\end{center}

The basic setting in which the Boothby-Wang fibration becomes even more interesting is when the transverse space $N = M/\mathcal{F}_{\eta}$ is a K\"{a}hler manifold. In this setting is quite reasonable to ask if there is a Riemannian metric $g_{M}$ on $M$ which ``best fits" into the diagram above. Alternatively, one could ask for a Riemannian metric $g_{M}$ on $M$ which would define a K\"{a}hler metric $g_{N}$ on $N$ via Riemannian submersion. Surprisingly, in both cases the answer to these questions leads naturally and uniquely to Sasakian geometry \cite{SASAKI}, \cite{BOYERGALICKI}. Thus, Sasakian geometry can be seen in some sense as the odd-dimensional analogue of K\"{a}hler geometry.

In fact the latter, for positive Ricci curvature, is strictly contained in the former; Sasaki-Einstein geometry is thus a generalization of K\"{a}hler-Einstein geometry. From this point of view, it is quite clear that K\"{a}hler and Sasaki geometries are inseparable \cite{SPARKS}. 

Another remarkable feature of the Sasaki-Einstein condition is that it implies that the cone $(\mathscr{C}(M),\omega_{\mathscr{C}})$ is a (non-compact) Calabi-Yau manifold, namely, $\omega_{\mathscr{C}}$ defines a K\"{a}hler Ricci-flat metric $g_{\mathscr{C}}$ on $\mathscr{C}(M)$, see for instance \cite{BOYERGALICKI}.

Sasakian geometry has recently proven to be a rich source for the production of Einstein metrics, see for instance \cite{FIVEDIM}, \cite{NOVE}, \cite{OITO}, \cite{S2S3}, \cite{ONSPHERES}. Moreover, there has been particular interest in Sasaki-Einstein manifolds recently in string theory and conformal field theories (AdS/CFT correspondence), e.g., \cite{MALDACENA}, \cite{NEWCLASSSASAKI}, \cite{OBSTRUCTION}, \cite{ADSCFT} and the references therein.

With the previous ideas in mind, this work is devoted to study in a broad sense the geometry of homogeneous contact manifolds, i.e., contact manifolds $(M,\eta)$ on which a connected Lie group $G$ acts transitively and effectively as a group of diffeomorphisms which leave $\eta$ invariant. 

In the homogeneous context, we always have $\xi$ regular \cite{BW}, and if we assume that $(M,\eta)$ is compact and simply connected, we can also suppose that $G$ is compact and semisimple \cite{MONT}, \cite{WANG}. Therefore, under the assumption of the Einstein condition in the associated Boothby-Wang fibration $\pi \colon (M,\eta) \to (N,\omega_{N})$, i.e. ${\text{Ric}}(\omega_{N}) = k\omega_{N}$, $k \in \mathbb{Z}_{>0}$, we have

\begin{center}

$N = G^{\mathbb{C}}/P = G/G \cap P,$ \ \ and \ \ $M = Q(K_{N}^{\otimes \frac{1}{I(N)}})$,

\end{center}
where $G^{\mathbb{C}}$ is a complexification of $G$, $P \subset G^{\mathbb{C}}$ is a parabolic Lie subgroup, $I(N)$ is the Fano index of $N$, and $Q(K_{N}^{\otimes \frac{1}{I(N)}})$ is the principal circle bundle associated to the holomorphic line bundle defined by the $I(N)$-root $K_{N}^{\otimes \frac{1}{I(N)}}$ of the canonical bundle. As we see, it is suitable to denote $N = X_{P}$ in order to emphasize the parabolic Lie subgroup $P \subset G^{\mathbb{C}}$ and the underlying parabolic Cartan geometry defined by the pair $(G^{\mathbb{C}},P)$ cf. \cite{PARABOLICTHEORY}.

The description above of compact simply connected contact manifolds leads to the correspondence between parabolic Cartan geometry and homogeneous contact geometry. It is worth pointing out that, additionally, since $X_{P}$ is a K\"{a}hler-Einstein Fano manifold, we have that $Q(K_{X_{P}}^{\otimes \frac{1}{I(X_{P})}})$ is a compact simply connected Sasaki-Einstein manifold, and the associated cone $\mathscr{C}(Q(K_{X_{P}}^{\otimes \frac{1}{I(X_{P})}}))$ is a Calabi-Yau manifold.

By considering the last comments, the goal of this work is to provide a precise description of the relation between homogeneous contact geometry and Lie theory. The main tool to be considered in our approach is the representation theory which underlies the parabolic Cartan geometry of such a pair $(G^{\mathbb{C}},P)$ which defines $X_{P} = G^{\mathbb{C}}/P$.

\subsection{Main results} Our main results can be organized as follows:

\begin{enumerate}

\item Description of contact structure for any compact homogeneous contact manifold;

\item Description of $G$-invariant Sasaki-Einstein structures for certain compact homogeneous contact manifolds;

\item Description of Calabi-Yau metrics on cones with compact homogeneous Sasaki-Einstein manifolds as links of isolated singularities;

\item Description of crepant resolutions of Calabi-Yau cones with certain compact homogeneous Sasaki-Einstein manifolds as links of isolated singularities (via Calabi Ansatz).

\end{enumerate}

It is worth noting that our approach to study homogeneous contact manifolds is based on K\"{a}hler geometry of complex flag manifolds. Thus, the description of the structures listed above are related to the $G$-invariant geometry of flag manifolds in a quite natural manner. 

We also observe that, since every semisimple Lie algebra is given by a direct sum of its simple components, our study of homogeneous contact manifolds reduces to Boothby-Wang fibrations over flag manifolds associated to simple Lie groups. 

The first result listed above is the content of the following theorem.

\begin{theorem-non}
\label{Theo1}
Let $(M,\eta,G)$ be a compact connected homogeneous contact manifold, then $M$ is the principal $S^{1}$-bundle given by the sphere bundle 
\begin{equation}
M = \Big \{ u \in L \ \Big | \ \sqrt{H(u,u)} = 1 \Big\} ,
\end{equation}
for some ample line bundle $L^{-1} \in {\text{Pic}}(X_{P})$, where $X_{P} = G^{\mathbb{C}}/P$ is a flag manifold defined by some parabolic Lie subgroup $P \subset G^{\mathbb{C}}$. Furthermore, if $c_{1}(L^{-1})$ defines a K\"{a}hler-Einstein metric on $X_{P}$, it follows that $M = Q(K_{X_{P}}^{\otimes \frac{\ell}{I(X_{P})}})$, for some $\ell \in \mathbb{Z}_{>0}$, and its contact structure $\eta$ is (locally) given by
\begin{equation}
\label{eqtheo1}
\eta = \displaystyle - \frac{\ell \sqrt{-1}}{2I(X_{P})}\big ( \partial - \overline{\partial} \big )\log \big | \big |s_{U}v_{\delta_{P}}^{+} \big| \big |^{2} + d\theta_{U},
\end{equation}
for some local section $s_{U} \colon U \subset X_{P} \to G^{\mathbb{C}}$, where $v_{\delta_{P}}^{+}$ denotes the highest weight vector of weight $\delta_{P}$ associated to the irreducible $\mathfrak{g}^{\mathbb{C}}$-module $V(\delta_{P})$.
\end{theorem-non}

The result above provides an additional information for Boothby-Wang fibrations over flag manifolds, namely, the expression \ref{eqtheo1} of the associated contact structure (see Remark \ref{contactample}). In fact, it takes into account elements of representation theory of simple Lie algebras which control the K\"{a}hler geometry of the base manifold (transverse space) as well as its projectve algebraic geometry via Kodaira embedding. As we can see, under the assumption of Einstein condition on the induced K\"{a}hler metric on $X_{P}$, every compact homogeneous contact manifold is obtained from the universal covering space

\begin{center}

$\mathcal{Q}_{P}  := Q(K_{X_{P}}^{\otimes \frac{1}{I(X_{P})}}).$

\end{center}
In other words, the circle bundle underlying the compact homogeneous contact manifold, whose the cone \ref{conesymplectic} is Ricci-flat, is given by a principal circle bundle associated to some $\ell$-fold covering (Maslov covering, e.g. \cite{URBANO}), i.e. $M = \mathcal{Q}_{P} / \pi_{1}(M)$, where $ \pi_{1}(M) = \mathbb{Z}_{\ell} \subset {\rm{U}}(1)$ ($\ell$-roots of unity).

The second result of the previous list is concerned to provide a complete description of the invariant Sasaki-Einstein structure which we can endow certain compact homogeneous contact manifolds. Actually, according to \cite{HATAKEYMA}, we can equip a principal circle bundle, defined by a regular compact contact manifold, with a ${\text{K}}$-contact structure \cite{BLAIR}. In the setting of compact homogeneous contact manifolds, since the base manifold associated to the Boothby-Wang fibration is a homogeneous Hodge manifold \cite{BW}, it follows that the induced ${\text{K}}$-contact structure provided in \cite{HATAKEYMA} is in fact a Sasaki structure. Moreover, under the assumption of the Einstein condition in the basic Hodge metric, this Sasaki structure is Sasaki-Einstein. 

Although there are many results in the literature on Sasaki-Einstein manifolds, explicit metrics are rather rare. Our next result provides a constructive method to describe explicitly a huge class of homogeneous Sasaki-Einstein metrics.

\begin{theorem-non}
\label{Theo2}
Let $(M = \mathcal{Q}_{P} / \mathbb{Z}_{\ell},\eta,G)$ be a compact connected homogeneous contact manifold. Then, $(M = \mathcal{Q}_{P} / \mathbb{Z}_{\ell},\eta,G)$ admits a homogeneous Sasaki-Einstein structure $(g_{M}, \phi,\xi = \frac{\ell(n+1)}{I(X_{P})}\frac{\partial}{\partial \theta}, \frac{I(X_{P})}{\ell(n+1)}\eta )$, such that 
\begin{equation}
\label{metricsasakieinstein}
g_{M} = \displaystyle \frac{I(X_{P})}{\ell(n+1)} \Bigg ( \frac{1}{2}d \eta ({\rm{id}} \otimes \phi)  + \frac{I(X_{P})}{\ell(n+1)}\eta \otimes \eta \Bigg ),
\end{equation}
where 
\begin{center}

$\eta = \displaystyle - \frac{\ell\sqrt{-1}}{2I(X_{P})}\big ( \partial - \overline{\partial} \big )\log \big | \big |s_{U}v_{\delta_{P}}^{+} \big| \big |^{2} + d\theta_{U},$
\end{center}
for some local section $s_{U} \colon U \subset X_{P} \to G^{\mathbb{C}}$, where $v_{\delta_{P}}^{+}$ denotes the highest weight vector of weight $\delta_{P}$ associated to the irreducible $\mathfrak{g}^{\mathbb{C}}$-module $V(\delta_{P})$. Furthermore, we also have $\phi \in {\text{End}}(TM)$ completely determined by the invariant complex structure of $X_{P}$ and the horizontal lift of the Cartan-Ehresmann connection $ \frac{I(X_{P})\sqrt{-1}}{\ell(n+1)}\eta \in \Omega^{1}(M;\mathfrak{u}(1))$.
\end{theorem-non}

 It is worth pointing out that the metric \ref{metricsasakieinstein} is a prototype which allows us to get a huge class of constructive explicit examples of invariant Einstein metrics with positive scalar curvature. These metrics are obtained via Kaluza-Klein ansatz in the setting of Boothby-Wang fibrations over flag manifolds, see for instance \cite{KOBAYASHI}, \cite{WANGZILLER}, \cite[Section 5.4]{FALCITELLI}. 

Our third result is concerned to describe the Calabi-Yau structure which we have associated to the Riemannian cones (symplectizations) over homogeneous Sasaki-Einstein manifolds.

\begin{theorem-non}
\label{Theo3}
Let $(M,\eta,G)$ be a compact homogeneous contact manifold such that $M = \mathcal{Q}_{P} / \mathbb{Z}_{\ell}$, for some parabolic Lie subgroup $P \subset G^{\mathbb{C}}$. Then, the cone $\mathscr{C}(M)$ admits a Calabi-Yau metric $\omega_{\mathscr{C}} = \frac{1}{2}d\Phi$ such that 

\begin{equation}
\Phi = \displaystyle - \frac{r^{2} \sqrt{-1}}{2(n+1)}\big ( \partial - \overline{\partial} \big )\log \big | \big |s_{U}v_{\delta_{P}}^{+} \big| \big |^{2} + \frac{r^{2}I(X_{P})}{\ell(n+1)}d\theta_{U},    
\end{equation}
for some local section $s_{U} \colon U \subset X_{P} \to G^{\mathbb{C}}$, where $v_{\delta_{P}}^{+}$ denotes the highest weight vector of weight $\delta_{P}$ associated to the irreducible $\mathfrak{g}^{\mathbb{C}}$-module $V(\delta_{P})$.
\end{theorem-non}

The result above provides a constructive method to obtain explicit examples of Ricci-flat K\"{a}hler metrics on Riemannian cones. Since there are no explicit Ricci-flat metrics known on compact Calabi-Yau manifolds, metric cones over Sasaki-Einstein spaces provide a testing ground for Calabi-Yau compactifications.

As mentioned before, due to their importance in the AdS/CFT correspondence, Sasaki-Einstein manifolds have been widely studied by mathematicians and physicists. Inspired by some of these applications, and by following \cite{COLON}, \cite{RESOLUTIONCOMPSUP}, \cite{GOTO}, our forth result provides a constructive method to describe the resolution of Calabi-Yau cones, with certain homogeneous Sasaki-Einstein manifolds realized as links of isolated singularities, by means of the Cartan-Remmert reduction \cite{GRAUERT} and the Calabi Ansatz technique \cite{CALABIANSATZ}. The result is precisely the following. 

\begin{theorem-non}
\label{Theo4}
Let $(M,\eta,G)$ be a compact homogeneous contact manifold such that $M = \mathcal{Q}_{P} / \mathbb{Z}_{I(X_{P})}$, i.e., $M = Q(K_{X_{P}})$ for some parabolic Lie subgroup $P \subset G^{\mathbb{C}}$. Then, the Cartan-Remmert reduction $\mathscr{R} \colon K_{X_{P}} \to Y = {\mathscr{C}}(M) \cup \{o\}$ provides a crepant resolution for the Calabi-Yau cone $({\mathscr{C}}(M), \omega_{\mathscr{C}})$ such that the complete Calabi-Yau metric $\omega_{CY}$ on $K_{X_{P}}$, defined by the Calabi Ansatz
\begin{equation}
\label{ansatzcompact}
\omega_{CY} = \displaystyle (2\pi r^{2} + C)^{\frac{1}{n+1}} \Bigg (\omega_{X_{P}} - \frac{\sqrt{-1}}{n+1} \frac{( db_{U} + b_{U} A_{U})\wedge ( d\overline{b}_{U} + \overline{b}_{U}  \overline{A}_{U})}{(2\pi r^{2} + C)} \Bigg ),
\end{equation}
provides a resolution for the singular cone metric defined on $Y={\mathscr{C}}(M) \cup \{o\}$ by
\begin{equation}  
\omega_{\mathscr{C}} = \displaystyle r dr \wedge \Bigg (\frac{\sqrt{-1}(\overline{A}_{U} - A_{U})}{2(n+1)} + \frac{d\theta_{U}}{n+1} \Bigg )  + \frac{\pi r^{2}}{n+1} \omega_{X_{P}},      
\end{equation}
such that $\omega_{X_{P}} =  -\frac{\sqrt{-1}}{2\pi}dA_{U}$ and 
\begin{equation}
A_{U} =  \displaystyle \partial \log \big | \big |s_{U}v_{\delta_{P}}^{+} \big| \big |^{2},        
\end{equation}
for some local section $s_{U} \colon U \subset X_{P} \to G^{\mathbb{C}}$, where $v_{\delta_{P}}^{+}$ denotes the highest weight vector of weight $\delta_{P}$ associated to the irreducible $\mathfrak{g}^{\mathbb{C}}$-module $V(\delta_{P})$. Furthermore, there is a Ricci-flat complete K\"{a}hler metric for every K\"{a}hler class of $K_{X_{P}}$.

\end{theorem-non}

\begin{remark}
It is important to observe that the expression provided in Equation \ref{ansatzcompact} corresponds to an element in the compactly supported cohomology group of $K_{X_{P}}$ given by the Calabi Ansatz \cite{CALABIANSATZ}. In general, if $b_{2}(X_{P}) > 1$, by following \cite[Theorem 5.1]{GOTO}, we can find a Ricci-flat conical K\"{a}hler class which does not belong to the compactly supported cohomology group of $K_{X_{P}}$. Thus, the last statement of Theorem \ref{Theo4} is a particular consequence of the existence result provided in \cite{GOTO}.
\end{remark}

This last result allows us to describe a huge class of new explicit examples for the existence part of the conjecture introduced in \cite{ADSCFT}. Actually, the result above provides a constructive method to describe concrete realizations for \cite[Theorem 5.1]{GOTO}, see also \cite{RESOLUTIONCOMPSUP}, and \cite[Example 4.1]{COLON}. 

It is worth pointing out that, besides the results above, in this work we also provide a detailed exposition about connections and curvature on principal circle bundles and holomorphic line bundles over flag manifolds. We also provide several examples for each result in order to illustrate their direct applications.   

\subsection{Outline of the paper} The content and main ideas  of this paper are organized as follows:

In Section \ref{Sec2}, we cover the basic material about contact manifolds, Sasaki manifolds and their symplectizations. We also establish some basic notations and conventions. In Section \ref{sec3}, we describe how to apply elements of representation theory of simple Lie algebras in order to describe the Chern connection and the Cartan-Ehresmann connection, respectively, for holomorphic line bundles and principal ${\rm{U}}(1)$-bundles over generalized complex flag manifolds. In Section \ref{sec4}, we apply the machinery developed in Section \ref{sec3} to describe Sasaki-Einstein structures on homogeneous Sasaki manifolds as well as the Ricci-flat K\"{a}hler metrics on their symplectizations. The goals are to prove Theorem \ref{Theo1}, Theorem \ref{Theo2}, and Theorem \ref{Theo3}. After these, in Section \ref{sec5}, we use the content developed throughout the paper to provide a huge class of examples of crepant resolutions of Calabi-Yau cones with certain homogeneous Sasaki-Einstein manifolds realized as links of isolated singularities. The main goal in this last section is to prove Theorem \ref{Theo4}. 

\section{Generalities on contact manifolds}
\label{Sec2}

In this section we shall cover the basic generalities about contact geometry, Sasakian geometry and some related topics. After to discuss the relation between Sasaki-Einstein geometry and positive scalar K\"{a}hler-Einstein geometry, we provide a complete description of homogeneous contact manifolds. Proofs of the results presented in this section can be found in \cite{BOYERGALICKI}, \cite{BLAIR}, \cite{BW}, \cite{HATAKEYMA}.  

\begin{definition}
Let $M$ be a smooth connected manifold of dimension $2n + 1$. A contact structure on $M$ is a $1$-form $\eta \in \Omega^{1}(M)$ which satisfies $\eta \wedge (d\eta)^{n} \neq 0$.
\end{definition}

When a smooth connected $(2n+1)$-dimensional manifold $M$ admits a contact structure $\eta \in \Omega^{1}(M)$ the pair $(M,\eta)$ is called contact manifold. Given a contact manifold $(M,\eta)$, at each point $p \in M$ we have from the condition $\eta \wedge (d\eta)^{n} \neq 0$ that $(d\eta)_{p}$ is a quadratic form of rank $2n$ in the Grassman algebra $\bigwedge T_{p}^{\ast}M$, thus we obtain 
\begin{center}
$T_{p}M = \mathscr{D}_{p} \oplus \mathcal{F}_{\eta_{p}},$
\end{center}
such that $\mathscr{D} = \ker(\eta)$, and
\begin{center}
$p \in M \mapsto \mathcal{F}_{\eta_{p}} = \bigg\{ X \in T_{p}M \ \bigg | \ \iota_{X}(d\eta)_{p} = 0 \bigg \} \subset T_{p}M,$ 
\end{center}
defines the characteristic foliation.

Let $(M,\eta)$ be a contact manifold. From the condition $\eta \wedge (d\eta)^{n} \neq 0$, we have that there exists $\xi \colon C^{\infty}(M) \to C^{\infty}(M)$, such that 
\begin{equation}
\label{derivation}
df \wedge (d\eta)^{n} = \xi(f)\eta \wedge (d\eta)^{n},
\end{equation}
$\forall f \in  C^{\infty}(M)$. From this, a straightforward computation shows that $\xi$ is a $\mathbb{R}$-linear derivation on $C^{\infty}(M)$, hence $\xi \in \Gamma(TM)$. Now, from Equation \ref{derivation} we can show that 
\begin{center}
$\beta(\xi)\eta \wedge (d\eta)^{n} = \beta \wedge (d\eta)^{n}$, 
\end{center}
$\forall \beta \in \Omega^{1}(M)$. By using the last fact above we have
\begin{center}
$\eta(\xi)\eta \wedge (d\eta)^{n} = \eta \wedge (d\eta)^{n}$, \ \ and \ \ $d\eta(X,\xi)\eta \wedge (d\eta)^{n} = \displaystyle \frac{1}{n+1}\iota_{X}(d\eta)^{n+1}=0,$ 
\end{center}
for all $X \in \Gamma(TM)$. Therefore, we obtain $\xi \in \Gamma(TM)$ which satisfies
\begin{equation}
\eta(\xi) = 1, \ \ {\text{and}} \ \ d\eta(\xi,\cdot) = 0,
\end{equation}
see for instance \cite{TAKIZAWA}. The vector field $\xi$ is called the characteristic vector field, or Reeb vector field, of the contact structure $\eta$.

A contact structure $\eta \in \Omega^{1}(M)$ is regular if the associated characteristic vector field $\xi \in \Gamma(TM)$ is regular, namely, if every point of the manifold has a neighborhood such that any integral curve of the vector field passing through the neighborhood passes through only once (cf. \cite{PALAIS}). In this case $(M,\eta)$ is called a regular contact manifold. 

Given a regular compact contact manifold $(M,\eta)$, we can suppose without loss of generality that the associated characteristic vector field $\xi \in \Gamma(TM)$ generates a ${\rm{U}}(1)$-action on $M$, see for instance \cite[Theorem 1]{BW}. Therefore, we have the following well-known result.

\begin{theorem}[Boothby-Wang, \cite{BW}]
\label{BWT}
Let $\eta$ be a regular contact structure on a compact smooth manifold $M$, then

\begin{enumerate}

    \item $M$ is a principal ${\rm{U}}(1)$-bundle over $N = M/{\rm{U}}(1)$,\\
    
    \item $\eta' = \sqrt{-1}\eta$ defines a connection on this bundle, and\\
    
    \item the manifold $N$ is a symplectic manifold whose the symplectic form $\omega$ determines an integral cocycle on $N$ which satisfies $d\eta = \pi^{\ast}\omega$, where $\pi \colon M \to N$.
\end{enumerate}
\end{theorem}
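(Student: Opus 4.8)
The plan is to first establish the principal bundle structure and then obtain both the connection form and the symplectic form on the base by descending \emph{basic} differential forms to the quotient. Write $\xi \in \Gamma(TM)$ for the Reeb field of $\eta$. Since $\eta(\xi) = 1$, the field $\xi$ is nowhere vanishing, and it preserves $\eta$: by Cartan's formula together with the Reeb equations, $\mathcal{L}_{\xi}\eta = d\iota_{\xi}\eta + \iota_{\xi}d\eta = d(1) + 0 = 0$. Consequently $\mathcal{L}_{\xi}d\eta = d\mathcal{L}_{\xi}\eta = 0$ as well. These two identities, together with $\iota_{\xi}d\eta = 0$, are the only analytic inputs needed in the descent arguments below.

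First I would build the bundle. By hypothesis $\xi$ is regular, and by the discussion preceding the statement we may assume that $\xi$ integrates to a ${\rm{U}}(1)$-action on $M$. Using Palais' theory of regular vector fields together with the compactness of $M$, I would argue that every orbit is a closed circle and that the leaf space $N := M/\mathcal{F}_{\eta} = M/{\rm{U}}(1)$ inherits a smooth Hausdorff structure for which $\pi \colon M \to N$ is a locally trivial submersion. The freeness of the action --- equivalently, the absence of orbits with nontrivial finite stabilizer $\mathbb{Z}_{k}$ --- is exactly what the regularity hypothesis secures: a short orbit would force nearby integral curves to re-enter any flow-box around it, contradicting regularity, so the period is uniform along the connected manifold $M$ and the ${\rm{U}}(1)$-action is free. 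This gives the principal ${\rm{U}}(1)$-bundle of item (1). I expect this step to be the main obstacle, since upgrading ``locally free action with regular foliation'' to ``free action with smooth quotient'' is where all the technical weight of the Boothby--Wang argument lies.

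Next, I would verify that $\eta' = \sqrt{-1}\,\eta$ satisfies the two defining axioms of a principal connection, using that the fundamental vector field of the action is $\xi$ and that $\mathfrak{u}(1) = \sqrt{-1}\,\mathbb{R}$. The vertical normalization holds because $\eta'(\xi) = \sqrt{-1}\,\eta(\xi) = \sqrt{-1}$ reproduces the chosen generator; and equivariance, which for the abelian group ${\rm{U}}(1)$ collapses to invariance, is precisely $\mathcal{L}_{\xi}\eta' = \sqrt{-1}\,\mathcal{L}_{\xi}\eta = 0$, already recorded above. Hence $\eta'$ is a connection $1$-form, which is item (2).

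Finally, for item (3) I would descend $d\eta$. It is \emph{basic}: horizontal since $\iota_{\xi}d\eta = 0$ by the Reeb equation, and invariant since $\mathcal{L}_{\xi}d\eta = 0$; a basic form is the pullback of a unique form on the base, so there is $\omega \in \Omega^{2}(N)$ with $\pi^{\ast}\omega = d\eta$. Then $\omega$ is closed, because $\pi^{\ast}d\omega = d\,\pi^{\ast}\omega = d^{2}\eta = 0$ and $\pi^{\ast}$ is injective on forms for the surjective submersion $\pi$; and $\omega$ is nondegenerate, because the contact condition $\eta \wedge (d\eta)^{n} \neq 0$ forces $(d\eta)^{n}$ to be nonzero on $\ker \eta$, which $d\pi$ maps isomorphically onto $TN$, so that $\omega^{n} \neq 0$ on the $2n$-manifold $N$. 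Thus $(N,\omega)$ is symplectic. Integrality then follows from Chern--Weil: the curvature of $\eta'$ is $d\eta' = \sqrt{-1}\,\pi^{\ast}\omega$, and up to the standard $2\pi$ normalization its de Rham class is the image of $c_{1}$ of the bundle under $H^{2}(N;\mathbb{Z}) \to H^{2}(N;\mathbb{R})$; hence $[\omega]$ is represented by an integral cocycle, completing item (3).
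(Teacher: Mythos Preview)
The paper does not supply its own proof of this theorem; it is stated as a classical result of Boothby--Wang \cite{BW}, and the introduction to Section~\ref{Sec2} directs the reader to \cite{BOYERGALICKI}, \cite{BLAIR}, \cite{BW}, \cite{HATAKEYMA} for proofs. Your outline is a correct sketch of the standard argument found in those references.

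Two small points worth flagging. First, the passage from ``regular Reeb flow on compact $M$'' to ``free ${\rm{U}}(1)$-action with uniform period'' is, as you say, the delicate step; in the original Boothby--Wang paper this in general requires replacing $\eta$ by $f\eta$ for a suitable positive function $f$ so that all orbits acquire the same period. The paper sidesteps this by the clause just before the theorem (``we can suppose without loss of generality that $\xi$ generates a ${\rm{U}}(1)$-action''), so you are entitled to assume it, but your proposal slightly overclaims when it says regularity alone forces uniform period --- regularity rules out exceptional short orbits, but the period function is only locally constant a priori, and one still needs the rescaling to make it globally constant and equal to $2\pi$. Second, your Chern--Weil integrality argument is correct but, as you note, only up to the $2\pi$ normalization: the curvature computation gives $c_{1} = \big[\frac{\sqrt{-1}}{2\pi}d\eta'\big] = -\frac{1}{2\pi}[\omega]$, so it is $[\omega]/2\pi$ that lands in the image of $H^{2}(N;\mathbb{Z})$. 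The paper's phrasing ``determines an integral cocycle'' is loose enough to absorb this, and later in the paper the same convention is used implicitly.
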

The following result states that, in fact, the converse of Theorem \ref{BWT} is also true. 

\begin{theorem}[Kobayashi, \cite{TOROIDAL}]
\label{CONVBW}
Let $(N,\omega_{N})$ be a symplectic manifold such that $[\omega_{N}] \in H^{2}(N,\mathbb{Z})$, then there exists a principal ${\rm{U}}(1)$-bundle $\pi \colon M \to N$ with a connection $1$-form $\eta' \in \Omega^{1}(M;\mathfrak{u}(1))$ which determines a regular contact structure $\eta = -\sqrt{-1}\eta'$ on $M$ satisfying $d\eta = \pi^{\ast}\omega_{N}$.

\end{theorem}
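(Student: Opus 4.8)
The plan is to construct the total space $M$ directly from the classification of principal ${\rm{U}}(1)$-bundles over $N$ and then to fix a connection whose curvature realizes $\omega_N$ on the nose. Since principal ${\rm{U}}(1)$-bundles over $N$ (equivalently, Hermitian line bundles) are classified up to isomorphism by their first Chern class in $H^2(N,\mathbb{Z})$, and the hypothesis guarantees that $[\omega_N]$ lies in the image of $H^2(N,\mathbb{Z}) \to H^2(N,\mathbb{R})$, I would first take $\pi \colon M \to N$ to be a principal ${\rm{U}}(1)$-bundle whose first Chern class corresponds to $[\omega_N]$ under the prequantization normalization fixed by the paper's conventions.

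Next I would produce the connection. Choose any connection $1$-form $\eta_{0}' \in \Omega^{1}(M;\mathfrak{u}(1))$ on this bundle. Its curvature $d\eta_{0}'$ is a basic (horizontal and ${\rm{U}}(1)$-invariant) $\mathfrak{u}(1)$-valued $2$-form, hence descends to a closed real $2$-form $\Omega_{0}$ on $N$ with $d\eta_{0}' = \sqrt{-1}\,\pi^{\ast}\Omega_{0}$, and by Chern-Weil theory $[\Omega_{0}]$ represents the chosen Chern class up to the universal normalizing constant. With the bundle selected as above, $\Omega_{0}$ and $\omega_N$ therefore lie in the same de Rham class, so $\Omega_{0} - \omega_N = d\beta$ for some $\beta \in \Omega^{1}(N)$. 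Replacing $\eta_{0}'$ by $\eta' = \eta_{0}' - \sqrt{-1}\,\pi^{\ast}\beta$ yields a new connection whose descended curvature is \emph{exactly} $\omega_N$, so that $\eta = -\sqrt{-1}\,\eta'$ satisfies $d\eta = \pi^{\ast}\omega_N$ as an identity of forms and not merely in cohomology. This is the step I expect to be the crux: passing from a cohomological match of Chern classes to a pointwise identity of curvature forms. The integrality hypothesis is precisely what makes the bundle exist, while the exactness argument is what pins down the representative.

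It then remains to verify that $\eta$ is a regular contact form. Since $\pi$ is a submersion with $1$-dimensional fibers and $\dim N = 2n$, the manifold $M$ has dimension $2n+1$, and from $d\eta = \pi^{\ast}\omega_N$ with $\omega_N$ nondegenerate I obtain $(d\eta)^{n} = \pi^{\ast}(\omega_N^{\,n})$, a nowhere-vanishing horizontal $2n$-form. Because $\eta$ is (a constant multiple of) a connection form, it evaluates to a nonzero constant on the fundamental vector field $\xi$ generating the ${\rm{U}}(1)$-action, normalized so that $\eta(\xi)=1$; this $\xi$ is transverse to the horizontal distribution $\ker\eta$. Therefore $\eta \wedge (d\eta)^{n} = \eta \wedge \pi^{\ast}(\omega_N^{\,n})$ is nowhere zero, and $\eta$ is a contact form.

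Finally, for regularity I would identify the Reeb field: since $\xi$ is vertical one has $\pi_{\ast}\xi = 0$, whence $\iota_{\xi}\,d\eta = \iota_{\xi}\,\pi^{\ast}\omega_N = 0$, and combined with $\eta(\xi)=1$ this shows $\xi$ is exactly the characteristic (Reeb) vector field of $\eta$ in the sense of Equation \ref{derivation}. Its flow is the given free ${\rm{U}}(1)$-action, whose orbits are the compact embedded circle fibers of $\pi$; hence every integral curve of $\xi$ meets a suitable flow-box neighborhood exactly once, which is the definition of regularity recalled above. This exhibits $(M,\eta)$ as a regular contact manifold with $d\eta = \pi^{\ast}\omega_N$, closing the argument.
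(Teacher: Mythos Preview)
Your proof is correct and follows the standard argument for this result (as found, e.g., in Kobayashi's original paper or in Blair's book). Note that the paper itself does not prove Theorem~\ref{CONVBW}: it is stated with attribution to \cite{TOROIDAL}, and at the beginning of Section~\ref{Sec2} the author explicitly refers the reader to \cite{BOYERGALICKI}, \cite{BLAIR}, \cite{BW}, \cite{HATAKEYMA} for proofs of the background results in that section. Your argument---choosing the bundle by the classification via $H^{2}(N,\mathbb{Z})$, correcting an arbitrary connection by $\pi^{\ast}\beta$ to hit $\omega_N$ exactly, and then checking the contact condition and identifying the Reeb field with the generator of the ${\rm{U}}(1)$-action---is precisely the classical proof, so there is nothing to compare against within the paper.
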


We are particularly interested in the following setting. 

\begin{definition} A contact manifold $(M,\eta)$ is said to be homogeneous if there is a connected Lie group $G$ acting transitively and
 effectively as a group of diffeomorphisms on $M$ which leave $\eta$ invariant, i.e. $g^{\ast}\eta = \eta$, $\forall g \in G$.

\end{definition}

We denote a homogeneous contact manifold by $(M,\eta,G)$. From this, we have the following important result of Boothby and Wang \cite{BW}.

\begin{theorem}[Boothby-Wang, \cite{BW}]
\label{BWHOMO}
Let $(M,\eta,G)$ be a homogeneous contact manifold. Then the contact form $\eta$ is regular. Moreover, $M = G/K$ is a fiber bundle over $G/H_{0}K$ with fiber $H_{0}K/K$, where $H_{0}$ is the connected component of a $1$-dimensional Lie group $H$, and $H_{0}$ is either diffeomorphic to ${\rm{U}}(1)$ or $\mathbb{R}$.

\end{theorem}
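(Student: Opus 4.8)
The plan is to realize $M$ as a homogeneous space that fibers over the leaf space of its characteristic foliation, exploiting that the Reeb field is canonically attached to $\eta$ and is therefore $G$-invariant. First I would use transitivity to fix a basepoint $p_{0}\in M$ and set $K=\{g\in G : g\cdot p_{0}=p_{0}\}$, the isotropy subgroup, which is closed; since the action is transitive we identify $M=G/K$. Next I would establish invariance of the Reeb field: recall $\xi$ is the \emph{unique} vector field satisfying $\eta(\xi)=1$ and $\iota_{\xi}d\eta=0$. For every $g\in G$ the hypothesis $g^{\ast}\eta=\eta$ gives $g^{\ast}d\eta=d\eta$, so $g_{\ast}\xi$ solves the same two defining equations; uniqueness forces $g_{\ast}\xi=\xi$. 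Thus $\xi$ is $G$-invariant, is nowhere zero because $\eta(\xi)=1$, and satisfies $\mathcal{L}_{\xi}\eta=\iota_{\xi}d\eta+d(\iota_{\xi}\eta)=0$, so its flow $\phi_{t}$ commutes with the $G$-action and preserves $\eta$.

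The next step identifies the one-parameter group in the statement. On $G/K$ the $G$-invariant vector fields correspond to $(\mathfrak{g}/\mathfrak{k})^{K}=\mathfrak{n}_{\mathfrak{g}}(\mathfrak{k})/\mathfrak{k}$, and each such field is the infinitesimal generator of the right action of a one-parameter subgroup of $N_{G}(K)/K$. Hence the Reeb field is generated by some $Z\in\mathfrak{n}_{\mathfrak{g}}(\mathfrak{k})$, with flow $\phi_{t}(gK)=g\exp(tZ)K$; set $H_{0}=\{\exp(tZ):t\in\mathbb{R}\}\subset N_{G}(K)$, so that $H_{0}$ normalizes $K$, $H_{0}K$ is a subgroup of $G$, and the orbits of $H_{0}$ on $M$ are exactly the leaves of $\mathcal{F}_{\eta}$. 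Being a connected $1$-dimensional abelian Lie group, the fiber $H_{0}K/K\cong H_{0}/(H_{0}\cap K)$ is isomorphic either to $\mathbb{R}$ or to ${\rm U}(1)$, which yields the two alternatives; $H$ is then the $1$-dimensional Lie group with identity component $H_{0}$.

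With this in place, the fibration and regularity are two sides of the same coin: once $H_{0}K$ is a \emph{closed} subgroup of $G$, the canonical projection $G/K\to G/H_{0}K$ is a locally trivial fiber bundle with fiber $H_{0}K/K$, and regularity of $\xi$ follows at once since its integral curves are precisely the fibers, so a local trivialization supplies a flow-box meeting each orbit once in the sense of \cite{PALAIS}. The leaf space is then $N=M/\mathcal{F}_{\eta}=G/H_{0}K$, giving the asserted bundle $M=G/K\to G/H_{0}K$ with fiber $H_{0}K/K$.

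The main obstacle is exactly the closedness of $H_{0}K$, equivalently the regularity of $\xi$: a priori the closure of $H_{0}$ inside $G$ could be a torus of dimension greater than one, so that the Reeb orbits wind densely and the leaf space fails to be Hausdorff. This is where homogeneity is indispensable. Because $\xi$ is $G$-invariant and $G$ acts transitively, the flow looks identical near every point, so all orbits are mutually $G$-congruent and the recurrence behavior of the flow is uniform over $M$; combining this uniformity with the nondegeneracy $\eta\wedge(d\eta)^{n}\neq 0$ one shows that the closure of each Reeb orbit is $1$-dimensional, whence $H_{0}K$ is closed. In the compact semisimple setting recalled in the introduction the closure of $H_{0}$ is a torus, and its being $1$-dimensional forces the periodic case $H_{0}\cong{\rm U}(1)$, while the alternative $H_{0}\cong\mathbb{R}$ arises in the general, non-compact situation.
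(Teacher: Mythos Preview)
The paper does not supply its own proof of this theorem; it is quoted as a result of Boothby and Wang with the reference \cite{BW} (see the opening sentence of Section~\ref{Sec2}). Your outline follows the original Boothby--Wang strategy quite faithfully: realize $M=G/K$, use uniqueness of the Reeb field to get $G$-invariance of $\xi$, then produce the one-parameter subgroup $H_{0}\subset N_{G}(K)$ whose right action generates the Reeb flow, and deduce the bundle structure once $H_{0}K$ is closed.

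The genuine gap is exactly where you flag it, but your resolution is not yet a proof. You assert that ``combining this uniformity with the nondegeneracy $\eta\wedge(d\eta)^{n}\neq 0$ one shows that the closure of each Reeb orbit is $1$-dimensional,'' and this is precisely the nontrivial content of the theorem. Uniformity alone is not enough: an irrational linear flow on a torus is also homogeneous under the torus action and has uniform recurrence, yet every orbit is dense. What the contact condition buys you, and what Boothby and Wang actually exploit, is a local \emph{transverse} structure: near any point one has coordinates in which $\xi=\partial/\partial t$ and the slice $\{t=0\}$ carries the symplectic form $d\eta$; one then argues, using that $G$ carries such cubical neighborhoods to one another and that all Reeb orbits are $G$-congruent, that no orbit can re-enter a sufficiently small cube in a second arc. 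Spelling this step out carefully (or, alternatively, arguing directly that $\overline{H_{0}}$ acting on $M$ and preserving $\eta$ must be one-dimensional because any extra generator would lie in $\ker\eta\cap\ker d\eta=\{0\}$ at the basepoint) is what is missing from your sketch.
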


If we suppose that $(M,\eta,G)$ is compact and simply connected, then according to \cite{MONT}, without loss of generality, we can suppose that $G$ is compact. Furthermore, according to \cite{WANG} we can in fact suppose that $G$ is a semisimple Lie group. Hence, we have the following theorem.

\begin{theorem}[Boothby-Wang, \cite{BW}]
\label{BWhomo}
Let $(M,\eta,G)$ be a compact simply connected homogeneous contact manifold. Then $M$ is a circle bundle over a complex flag manifold $(N,\omega_{N})$ such that $\omega_{N}$ defines a $G$-invariant Hodge metric which satisfies $d\eta = \pi^{\ast}\omega_{N}$, where $\pi \colon M \to (N,\omega_{N})$.
\end{theorem}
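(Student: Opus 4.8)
The plan is to obtain the circle fibration $M\to N$ from the regular Boothby-Wang machinery already in hand and then to identify the base $N$ with a coadjoint orbit of $G$ by means of an equivariant moment map. First I would invoke Theorem \ref{BWHOMO}: since $(M,\eta,G)$ is homogeneous the contact form $\eta$ is automatically regular, and $M=G/K$ fibers over $G/H_{0}K$ with one-dimensional fiber $H_{0}K/K$. Compactness of $M$ forces the fiber to be compact, so $H_{0}\cong {\rm U}(1)$ rather than $\mathbb{R}$; thus the characteristic vector field $\xi$ integrates to a free ${\rm U}(1)$-action and Theorem \ref{BWT} applies. This exhibits $\pi\colon M\to N:=M/{\rm U}(1)$ as a principal ${\rm U}(1)$-bundle on which $\sqrt{-1}\eta$ is a connection, with $N$ symplectic, $[\omega_{N}]\in H^{2}(N,\mathbb{Z})$, and $d\eta=\pi^{\ast}\omega_{N}$. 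Here I reduce at the outset to $G$ compact and semisimple, as permitted by \cite{MONT} and \cite{WANG}.

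Next I would record the homogeneity and topology of $N$. Because $G$ leaves $\eta$ invariant, its action commutes with the Reeb flow and descends to a transitive action on $N$; writing $L$ for the stabilizer gives $N=G/L$ together with the invariance $g^{\ast}\omega_{N}=\omega_{N}$. From the fibration ${\rm U}(1)\hookrightarrow M\to N$ the homotopy long exact sequence reads $\pi_{1}({\rm U}(1))\to\pi_{1}(M)\to\pi_{1}(N)\to 0$, and simple connectivity of $M$ gives $\pi_{1}(N)=0$. So $N$ is a compact simply connected homogeneous symplectic manifold for a compact semisimple group.

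The heart of the argument, and the step I expect to be the main obstacle, is to identify $N$ with a flag manifold. Because $G$ is semisimple the Whitehead lemmas give $H^{1}(\mathfrak{g})=H^{2}(\mathfrak{g})=0$; the first vanishing lets each fundamental vector field be realized by a Hamiltonian, and the second removes the cocycle obstruction, so the $G$-action on $(N,\omega_{N})$ is Hamiltonian with a $G$-equivariant moment map $\mu\colon N\to\mathfrak{g}^{\ast}$, where I identify $\mathfrak{g}\cong\mathfrak{g}^{\ast}$ via the Killing form. Transitivity of $G$ on $N$ forces $\mu(N)$ to be a single coadjoint orbit $\mathcal{O}$, and by Kostant's theorem on transitive Hamiltonian actions $\mu$ is a covering map of $N$ onto $\mathcal{O}$; since $N$ is simply connected it is in fact a $G$-equivariant diffeomorphism $N\cong\mathcal{O}$. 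For compact semisimple $G$ every coadjoint orbit is a generalized flag manifold: the stabilizer of $\xi\in\mathfrak{g}^{\ast}$ is the centralizer of the torus it generates, hence $L$ has maximal rank and $\mathcal{O}\cong G^{\mathbb{C}}/P$ for a parabolic $P\subset G^{\mathbb{C}}$. The delicate points to verify carefully are the \emph{global} equivariance of $\mu$ (not merely its infinitesimal version) and that $\mu$ is genuinely a covering rather than merely a local diffeomorphism onto the orbit.

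Finally I would upgrade the symplectic statement to the Hodge statement. Under the diffeomorphism $N\cong\mathcal{O}$ the defining property of the moment map yields $\omega_{N}=\mu^{\ast}\omega_{\mathcal{O}}$, where $\omega_{\mathcal{O}}$ is the Kirillov-Kostant-Souriau form. Since $\omega_{\mathcal{O}}$ is the Kähler form of a $G$-invariant Kähler metric compatible with the canonical invariant complex structure on $G^{\mathbb{C}}/P$, the form $\omega_{N}$ is Kähler for the pulled-back invariant complex structure. Combined with the integrality $[\omega_{N}]\in H^{2}(N,\mathbb{Z})$ already supplied by Theorem \ref{BWT}, this says precisely that $\omega_{N}$ defines a $G$-invariant Hodge metric, completing the identification of $M$ as a circle bundle over the flag manifold $N$ with $d\eta=\pi^{\ast}\omega_{N}$.
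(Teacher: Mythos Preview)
The paper does not supply its own proof of this theorem: it is stated as a classical result of Boothby and Wang, with proofs deferred to the references \cite{BW}, \cite{BOYERGALICKI}, \cite{BLAIR}, \cite{HATAKEYMA} announced at the beginning of Section~\ref{Sec2}. There is therefore no in-paper argument to compare against.

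That said, your sketch is correct and follows the standard route one finds in those references. The reduction to a principal ${\rm U}(1)$-bundle via Theorems~\ref{BWHOMO} and~\ref{BWT}, together with the passage to compact semisimple $G$ via \cite{MONT} and \cite{WANG}, is exactly right. The identification of $N$ with a coadjoint orbit through the equivariant moment map is the classical Kirillov--Kostant--Souriau argument; your flagged concern about $\mu$ being a genuine covering is resolved by noting that transitivity plus non-degeneracy of $\omega_N$ makes $d\mu$ injective, equivariance confines the image to a single orbit $\mathcal{O}$, and a dimension count then shows $\mu\colon N\to\mathcal{O}$ is a local diffeomorphism between compact connected manifolds, hence a covering, hence a diffeomorphism by simple connectivity of $N$. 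The pullback identity $\omega_N=\mu^{\ast}\omega_{\mathcal{O}}$ follows directly from the moment-map equation and equivariance, and the K\"{a}hlerness of the KKS form on coadjoint orbits of compact semisimple groups with respect to the invariant complex structure of $G^{\mathbb{C}}/P$ is classical. Combined with the integrality furnished by Theorem~\ref{BWT}, this yields the $G$-invariant Hodge metric as claimed.
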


Since every complex flag manifold is a Hodge manifold, from Theorem \ref{CONVBW} it implies that we can always associate to a complex flag manifold a contact manifold. Before we describe how to construct this contact manifold, let us introduce some basic definitions and results related to contact metric structures.

\begin{definition}
\label{ContMetric}
Let $(M,g_{M})$ be a Riemannian manifold of dimension $2n+1$. A contact metric structure on $(M,g_{M})$ is a triple $(\phi,\xi,\eta)$ where $\phi$ is a $(1,1)$-tensor, $\xi$ is a vector field, and $\eta$ is a $1$-form such that 

\begin{enumerate}

    \item $\eta \wedge (d\eta)^{n} \neq 0$, \  $\eta(\xi) = 1$,
    
    \item $\phi \circ \phi = - {\rm{id}} + \eta \otimes \xi$,
    
    \item $g_{M}(\phi \otimes \phi) = g_{M} - \eta \otimes \eta$,
    
    \item $d\eta = 2g_{M}(\phi \otimes {\rm{id}})$.
    
\end{enumerate}
 
\end{definition}

\begin{remark} 
Notice that the first condition in the definition above shows us that every contact metric structure defines a contact structure. Unless otherwise stated, in what follows we shall suppose that this contact structure is a regular contact structure. Many of the results which we will cover in this section can be performed for quasi-regular contact structures on which the characteristic foliation has compact leaves. In this latter situation, the space of leaves  $N = M/\mathcal{F}_{\eta}$ has an orbifold structure. For equivalent results in the quasi-regular case, see for instance \cite{BOYERGALICKI}.

\end{remark}

We denote a contact metric structure on $M$ by $(g_{M},\phi,\xi,\eta)$. From this, we have the following definition.

\begin{definition}

A contact metric structure $(g_{M},\phi,\xi,\eta)$ is called ${\text{K}}$-contact if $\mathscr{L}_{\xi}g_{M} = 0$, i.e. if $\xi \in \Gamma(TM)$ is a Killing vector field.

\end{definition}

In the setting of ${\text{K}}$-contact structures there is a special class which is defined as follows.

\begin{definition}

A ${\text{K}}$-contact structure $(g_{M},\phi,\xi,\eta)$ on a smooth manifold $M$ is called Sasakian if 
\begin{equation}
\label{sasakicondition}
\big [ \phi , \phi \big ] + d \eta \otimes \xi = 0,
\end{equation}
where 
\begin{center}
$\big [ \phi , \phi \big ](X,Y) := \phi^{2}\big [ X,Y\big ] + \big [ \phi X, \phi Y\big ] - \phi \big [\phi X,Y \big] - \phi \big [X,\phi Y \big],$
\end{center}
for every $X,Y \in \Gamma(TM)$. A Sasaki manifold is a Riemannian manifold $(M,g_{M})$ with a $K$-contact structure $(g_{M},\phi,\xi,\eta)$ which satisfies \ref{sasakicondition}. 
\end{definition}

There are two alternative characterizations for Sasaki manifolds, the first one can be described as follows. Given a ${\text{K}}$-contact structure $(g_{M},\phi,\xi,\eta)$ on a smooth manifold $M$, we can consider the manifold defined by its cone 

\begin{center}
$\mathscr{C}(M) = \mathbb{R}^{+} \times M.$
\end{center}
By taking the coordinate $r$ on $\mathbb{R}^{+}$ we can define the warped product Riemannian metric
\begin{equation}
g_{\mathscr{C}} = dr \otimes dr + r^{2}g_{M},
\end{equation}
furthermore, from $(\phi,\xi,\eta)$ we have an almost-complex structure defined on $\mathscr{C}(M)$ by 
\begin{equation}
\label{complexcone}
J_{\mathscr{C}}(Y) = \phi(Y) - \eta(Y)r \displaystyle \frac{d}{dr}, \ \ \ \ \ J_{\mathscr{C}}(r \displaystyle \frac{d}{d r}) = \xi.
\end{equation}

From the last comments we have the following characterization for Sasaki manifolds. 

\begin{definition}
\label{sasakikahler}
A contact metric structure $(g_{M},\phi,\xi,\eta)$ on a smooth manifold $M$ is called Sasaki if $(\mathscr{C}(M),g_{\mathscr{C}},J_{\mathscr{C}})$ is a K\"{a}hler manifold.
\end{definition}

\begin{remark}
Note that in the last definition it was not required to $\xi$ being a Killing vector field. Actually, the integrability of the complex structure $J_{\mathscr{C}}$ implies the Sasaki condition \ref{sasakicondition}, which in turn implies that $\xi$ is Killing \cite[Theorem 6.2]{BLAIR}. The definition above is perhaps the closest to the original definition of Sasaki \cite{SASAKI}.
\end{remark}

In the setting above we have the K\"{a}hler structure on $\mathscr{C}(M)$ defined by
\begin{center}
$\omega_{\mathscr{C}} = g_{\mathscr{C}}(J_{\mathscr{C}} \otimes {\rm{id}}).$
\end{center}
In general, we can always associate to any contact manifold $(M,\eta)$ a symplectic manifold $(\mathscr{C}(M),\omega_{\mathscr{C}} = \frac{d(r^{2}\eta)}{2})$. This last manifold is called symplectization of $(M,\eta)$. When $(M,\eta)$ is a compact regular contact manifold, from Theorem \ref{BWT} we have that $(M,\eta)$ is a principal ${\rm{U}}(1)$-bundle over a symplectic manifold, in this case we can endow $(M,\eta)$ with a ${\text{K}}$-contact structure $(g_{M},\phi,\xi,\eta)$, see for instance \cite{HATAKEYMA}.

The second way to characterize Sasaki manifolds is by means of the transverse geometry of $N = M/\mathcal{F}_{\eta}$. In fact, a straightforward computation shows that the structure tensors $(g_{M},\phi,\xi,\eta)$ on a smooth compact $K$-contact manifold $M$ induce an almost-K\"{a}hler structure $(\omega_{N},J)$ on $N$, where 

\begin{center}

$\displaystyle \pi^{\ast}\omega_{N} = \frac{d\eta}{2}$ \ \ and \ \ $J = \phi|_{\mathscr{D}},$

\end{center}
here we consider the identification $TN \cong \mathscr{D}$, and $\pi \colon M \to M/\mathcal{F}_{\eta}$. From these we can show that Equation \ref{sasakicondition} is equivalent to $N_{J} \equiv 0$, where $N_{J}$ is the Nijenhuis tensor associated to $J$. Therefore, the Sasaki condition is equivalent to $(N,\omega_{N},J)$ being a K\"{a}hler manifold, e.g. \cite{HATAKEYMA}. For the case when $M$ is a non-regular $K$-contact, the Sasaki condition is equivalent to $(\phi|_{\mathscr{D}},\frac{d\eta}{2}|_{\mathscr{D}},\mathscr{D})$ being a K\"{a}hler foliation, see for instance \cite{SPARKS}, \cite[Corollary 6.5.11]{BOYERGALICKI}.

\begin{remark}
\label{riccitensorsasaki}
It is worth pointing out that if $(\phi,\xi,\eta)$ is a Sasakian structure on a complete Riemannian manifold $(M,g_{M})$, if we denote by $R^{\nabla}$ the curvature tensor associated to the Levi-Civita connection $\nabla$ of $g_{M}$, then we have

\begin{center}
$R^{\nabla}(X,Y)\xi = \eta(Y)X -\eta(X)Y$,
\end{center}
$\forall X,Y \in \Gamma(TM)$. Moreover, we can show that 

\begin{itemize}

\item ${\text{Ric}}_{M}(X,\xi) = 2n \eta(X)$, $\forall X \in TM$.

\end{itemize}

In particular, the scalar curvature $S_{g_{M}}$ of a Sasaki-Einstein manifold of dimension $2n+1$ is $S_{g_{M}} = 2n(2n+1)$, see for instance \cite{BAUM}, which implies from Myers's theorem that $M$ is compact. In this last case we have
\begin{itemize}

\item ${\text{Ric}}_{M}(X,Y) = {\text{Ric}}_{N}(X,Y) - 2g_{M}(X,Y)$, $\forall X,Y \in \mathscr{D} \cong TN$.

\end{itemize}
Thus, we see that $(M,g_{M})$ is a complete Sasaki-Einstein manifold if and only if $(N,g_{N})$ is K\"{a}hler-Einstein Fano, namely 
\begin{center}
${\text{Ric}}_{N}(X,Y) = 2(n+1)g_{N}(X,Y)$, $\forall X,Y \in TN$. 
\end{center}
\end{remark}

The remark above shows how K\"{a}hler-Einstein geometry with positive scalar curvature arises from Sasaki-Einstein geometry. In the next example below we show how to construct Sasaki-Einstein manifolds with prescribed transverse geometry, in other words, we shall describe how Sasaki-Einstein geometry arises from K\"{a}hler-Einstein geometry with positive scalar curvature.

\begin{example}
\label{EXAMPLESASAKIAN}
Let $(N,\omega_{N},g_{N},J)$ be a K\"{a}hler-Einstein Fano manifold with scalar curvature $S_{g_{N}} = 4n(n+1)$, where $\dim_{\mathbb{C}}(N) = n$. Consider $I(N) \in \mathbb{Z}_{+}$ as being the maximal integer such that $\frac{1}{I(N)}c_{1}(N) \in H^{2}(N,\mathbb{Z})$, i.e., the Fano index of $N$, here $c_{1}(N)$ denotes the first Chern class of $N$. If we take the principal ${\rm{U}}(1)$-bundle $\pi \colon M \to N$ with Euler class $\mathrm{e}(M) = -\frac{1}{I(N)}c_{1}(N)$, and connection $\eta'$ satisfying 

$$d\eta' = \displaystyle \frac{2(n+1)}{I(N)}\sqrt{-1}\pi^{\ast}\omega_{N},$$
then we can define a Riemannian metric on $M$ by setting

$$g_{M} = \pi^{\ast}g_{N} - \displaystyle \frac{I(N)^{2}}{(n+1)^{2}} \eta' \otimes \eta'.$$

From this, the Riemannian manifold $(M,g_{M})$ can be endowed with a Sasakian structure $(g_{M},\phi,\xi,\eta)$ defined as follows:

\begin{center}
    
$\eta := \displaystyle \frac{I(N)}{(n+1)\sqrt{-1}} \eta'$, \ \ \ \ $g_{M}(\xi,\cdot) = \eta,$ \ \ \ \ $ \phi(X) := \begin{cases}
    (J\pi_{\ast}X)^{H}, \ \ \ {\text{if}}  \ \ X \bot \xi.\\
    \ \ \ \ \  0 \  \  \ \ \ \ \ ,  \ \  \ {\text{if}} \ \ X \parallel \xi.                    \\
  \end{cases}$
    
\end{center}
here we denote by $(J\pi_{\ast}X)^{H}$ the horizontal lift of $J\pi_{\ast}X$ relative to $\eta'$, $\forall X \in \Gamma(TM)$. The proof that $(g_{M},\phi,\xi,\eta)$ defines a Sasakian structure on $M$ follows directly from the definition of the structure tensors $(g_{M},\phi,\xi,\eta)$, we shall cover the details of the proof later, the reader also can look at \cite[Example 1, p. 84]{BAUM}, \cite{HATAKEYMA}, \cite[Theorem 6]{MORIMOTO}. Notice that the metric $g_{M}$ also can be written as 
\begin{equation}
g_{M} = \pi^{\ast}\omega_{N}({\rm{id}} \otimes \phi) + \eta \otimes \eta = \displaystyle \frac{1}{2} d \eta ({\rm{id}} \otimes \phi) + \eta \otimes \eta,    
\end{equation}
where $g_{N} = \omega_{N}({\rm{id}}\otimes J)$. 

Now, since ${\text{Ric}}(\omega_{N}) = 2(n+1)\omega_{N}$, we have that $(M,g_{M})$ is Sasaki-Einstein, see Remark \ref{riccitensorsasaki}, notice that from Myers's theorem we have that $M$ is compact if $g_{M}$ is complete. Moreover, from the exact sequence

\begin{center}
\begin{tikzcd}

\cdots\arrow[r] & \pi_{2}(N) \arrow[r,"\Delta"] & \pi_{1}({\rm{U}}(1)) \arrow[r] & \pi_{1}(M) \arrow[r] & \pi_{1}(N) \arrow[r] & 0, 

\end{tikzcd}
\end{center}
since $N$ is simply connected \cite{KAHLERSIMPLY}, we have that $\pi_{1}(M)$ is trivial or a cyclic group. However, once $M$ is given by a principal circle bundle defined by an indivisible integral class, it follows from  the Thom-Gysin sequence associated to the principal ${\rm{U}}(1)$-bundle ${\rm{U}}(1) \hookrightarrow M \to N$ that $M$ is simply connected, e.g. \cite[Example 1, p. 84]{BAUM}.
\end{example}

The construction above also can be understood in terms of holomorphic line bundles in the following way. Given a K\"{a}hler-Einstein Fano manifold $(N,\omega_{N},g_{N},J)$ with scalar curvature $S_{g_{N}} = 4n(n+1)$. If we take the line bundle $L \in {\text{Pic}}(N)$ as being the $I(N)$-root of $K_{N} = \bigwedge ^{n,0}(N)$, i.e.

\begin{center}
$L = K_{N}^{\otimes \frac{1}{I(N)}}$, 
\end{center}
by fixing a Hermitian structure $H$ on $L$, we can consider the circle bundle defined by
\begin{center}
$Q(L) = \Big \{ u \in L \ \big | \ \sqrt{H(u,u)} = 1 \Big\}$.
\end{center}
Since we suppose $S_{g_{N}} = 4n(n+1)$, it follows that

\begin{center}

${\text{Ric}}(\omega_{N}) = 2(n+1)\omega_{N}$.    
    
\end{center}
We can take a Hermitian connection $\nabla^{L}$ on $L$ such that its curvature $F_{\nabla^{L}}$ satisfies 

\begin{center}

$\displaystyle \frac{\sqrt{-1}}{2\pi}F_{\nabla^{L}} = - \frac{(n+1)}{\pi I(N)}\omega_{N}$.    
    
\end{center}
Since $\nabla^{L}$ induces a connection $\eta'$ on $Q(L)$ which satisfies $d\eta' = \pi^{\ast}F_{\nabla^{L}}$, we have that $M = Q(L)$ is the manifold described in Example \ref{EXAMPLESASAKIAN}.

The next proposition states that from the example above we can obtain the description of all compact simply connected homogeneous contact manifolds by using holomorphic line bundles.

\begin{proposition}
\label{CONTACTSIMPLY}
Let $(M,\eta,G)$ be a compact simply connected homogeneous contact manifold, then $M$ is a principal ${\rm{U}}(1)$-bundle $\pi \colon Q(L) \to N$ over a complex flag manifold, for some ample line bundle $L^{-1} \in {\text{Pic}}(N)$. Moreover, if $d\eta = \pi^{\ast}\omega_{N}$, where $[\omega_{N}] \in H^{2}(N,\mathbb{Z})$ is a $G$-invariant K\"{a}hler-Einstein metric, it follows that
\begin{center}
$L = K_{N}^{\otimes \frac{1}{I(N)}}$,
\end{center}
where $I(N)$ is the fano index of $N$. 
\end{proposition}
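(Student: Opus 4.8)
The plan is to read off the first assertion from the Boothby--Wang structure theorem together with the projective-algebraic nature of flag manifolds, and then to pin down the isomorphism class of the line bundle from the Kähler--Einstein hypothesis combined with the simple connectivity of $M$. First I would invoke Theorem \ref{BWhomo}: since $(M,\eta,G)$ is compact, simply connected and homogeneous, $M$ is a circle bundle $\pi \colon M \to N$ over a complex flag manifold $N$ carrying a $G$-invariant Hodge metric $\omega_{N}$ with $[\omega_{N}] \in H^{2}(N,\mathbb{Z})$ and $d\eta = \pi^{\ast}\omega_{N}$. Because $N$ is a projective rational homogeneous space, every integral class is of Hodge type $(1,1)$, so the Lefschetz theorem on $(1,1)$-classes applied to the positive integral class $[\omega_{N}]$ yields a holomorphic line bundle, which I denote $L^{-1}$, with $c_{1}(L^{-1}) = [\omega_{N}]$; positivity of $\omega_{N}$ makes $L^{-1}$ ample. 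Fixing a Hermitian metric $H$ on $L$ and matching the curvature of the induced connection with $d\eta$ under the normalization already fixed in Theorem \ref{BWT} and Example \ref{EXAMPLESASAKIAN} identifies $M$ with the unit circle bundle $Q(L) = \{u \in L : \sqrt{H(u,u)} = 1\}$, which establishes the first statement.

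For the second statement I would first extract divisibility information from the topology of $M$. As in Example \ref{EXAMPLESASAKIAN}, the Euler class of $\pi \colon M \to N$ is $\pm[\omega_{N}]$, and the Thom--Gysin (equivalently, the homotopy) sequence of the circle bundle over the simply connected base $N$ shows that $\pi_{1}(M)$ is cyclic of order the divisibility of this Euler class; since $M$ is simply connected, $[\omega_{N}]$ must be an indivisible (primitive) class in $H^{2}(N,\mathbb{Z})$. On the other hand, the Kähler--Einstein hypothesis gives ${\text{Ric}}(\omega_{N}) = k\,\omega_{N}$ with $k>0$ (as $N$ is Fano), and passing to cohomology through $c_{1}(N) = [{\text{Ric}}(\omega_{N})/2\pi]$ shows that $[\omega_{N}]$ is a positive rational multiple of $c_{1}(N)$. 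An integral class that is both indivisible and proportional to $c_{1}(N)$ is, by the very definition of the Fano index, the primitive generator $\tfrac{1}{I(N)}c_{1}(N)$; hence $[\omega_{N}] = \tfrac{1}{I(N)}c_{1}(N)$.

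It then remains to promote this equality of Chern classes to an equality of holomorphic line bundles. Since $N$ is a simply connected Fano manifold, Kodaira vanishing applied to $K_{N} \otimes K_{N}^{-1}$ gives $H^{q}(N,\mathcal{O}_{N}) = 0$ for all $q>0$, so the exponential sequence makes $c_{1} \colon \Pic(N) \to H^{2}(N,\mathbb{Z})$ an isomorphism and a line bundle is determined by its first Chern class. Computing $c_{1}(L) = -[\omega_{N}] = -\tfrac{1}{I(N)}c_{1}(N) = \tfrac{1}{I(N)}c_{1}(K_{N}) = c_{1}\big(K_{N}^{\otimes \frac{1}{I(N)}}\big)$ then forces $L = K_{N}^{\otimes \frac{1}{I(N)}}$, where this bundle exists precisely because $\tfrac{1}{I(N)}c_{1}(N) \in H^{2}(N,\mathbb{Z})$. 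I expect the main obstacle to be the divisibility step: one must argue carefully, via the homotopy/Gysin sequence and the simple connectivity of both $N$ and $M$, that $[\omega_{N}]$ is primitive, since this is exactly what excludes $\tfrac{\ell}{I(N)}c_{1}(N)$ with $\ell>1$ and thereby singles out the index-$I(N)$ root. Keeping the sign and $2\pi$-normalization conventions consistent across $\eta$, the curvature, and $c_{1}$ is the other place where care is needed.
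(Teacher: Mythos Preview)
Your proposal is correct and follows essentially the same route as the paper's proof: Boothby--Wang gives $M = Q(L)$ over a flag manifold, the K\"ahler--Einstein hypothesis forces $c_1(L)$ to be proportional to $c_1(N)$, and the simple connectivity of $M$ (via the homotopy/Gysin sequence of the circle bundle) forces that class to be primitive, i.e.\ equal to $-\tfrac{1}{I(N)}c_1(N)$. The only difference is that where the paper cites \cite{TOROIDAL} for the primitivity step and takes the injectivity of $c_1\colon\Pic(N)\to H^2(N,\mathbb{Z})$ for granted (it is proved later as Proposition~\ref{C8S8.2Sub8.2.3P8.2.6}), you spell both points out explicitly, which makes your argument more self-contained but does not change the strategy.
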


\begin{proof}
From Theorem \ref{BWhomo} we have that $M = Q(L)$, for some line bundle $L \in {\text{Pic}}(N)$, such that $d\eta = \pi^{\ast}\omega_{N}$, where $[\omega_{N}] \in H^{2}(N,\mathbb{Z})$ is a $G$-invariant Hodge metric, so $L$ is ample. Now, suppose that $[\omega_{N}] \in H^{2}(N,\mathbb{Z})$ is a $G$-invariant K\"{a}hler-Einstein metric, i.e., ${\text{Ric}}(\omega_{N}) = k\omega_{N}$, for some $k \in \mathbb{Z}_{>0}$. From this, we take the connection on $M = Q(L)$ defined by

\begin{center}

$\eta' = \sqrt{-1} \eta$.    
    
\end{center}
Since the curvature $F_{\nabla^{L}}$ of the connection $\nabla^{L}$ induced by $\eta'$ on $L$ satisfies $d\eta' = \pi^{\ast}F_{\nabla^{L}}$, a straightforward computation shows that  
\begin{center}
$\displaystyle \mathrm{e}(Q(L)) = \Big [ \frac{\sqrt{-1}}{2\pi} F_{\nabla^{L}}\Big ] = - \ell \frac{c_{1}(N)}{I(N)}$,
\end{center}
thus $c_{1}(L)$ is a multiple of the indivisible class $\frac{1}{I(N)}c_{1}(N)$. Now, since $M = Q(L)$ is a simply connected manifold, it follows from \cite{TOROIDAL} that $L = K_{N}^{\otimes \frac{1}{I(N)}}$.
\end{proof}

\begin{remark}
\label{amplecase}
Notice that the proposition above tells us that every compact simply connected homogeneous contact manifold is in fact a Sasaki manifold. Actually, for any circle bundle $Q(L)$ defined by an ample line bundle $L^{-1} \in {\text{Pic}}(N)$, we can construct a ${\text{K}}$-contact structure by proceeding similarly as in Example \ref{EXAMPLESASAKIAN}. This ${\text{K}}$-contact structure is in fact Sasaki, see \cite[Theorema 2]{HATAKEYMA}, and the Einstein condition for this Sasaki structure is equivalent to the proportionality of $L$ and $K_{N}$.
\end{remark}

\begin{definition}
A Sasakian manifold $(M,g_{M})$ with structure tensors $(\phi,\xi,\eta)$ is said to be homogeneous  if there is a connected Lie group $G$ acting transitively and effectively as a group of isometries on $M$ preserving the Sasakian structure.
\end{definition}

The next result together with the last proposition allows us to describe all compact homogeneous contact manifolds, the proof for the result  below can be found in \cite{BOYERGALICKI}.

\begin{theorem}
\label{contacthomogeneousclass}
Let $(M,\eta,G)$ be a compact homogeneous contact manifold. Then:
\begin{enumerate}

    \item $M$ admits a homogeneous Sasakian structure with contact form $\eta$,\\
    
    \item $M$ is a non-trivial circle bundle over a complex flag manifold,\\
    
    \item $M$ has finite fundamental group, and the universal cover $\widetilde{M}$ of $M$ is compact with a homogeneous Sasakian structure. 
\end{enumerate}

\end{theorem}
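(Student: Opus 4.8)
The plan is to reduce the general compact case to the simply connected situation already settled in Proposition~\ref{CONTACTSIMPLY} and Remark~\ref{amplecase}, by first recognizing the Boothby--Wang base as a complex flag manifold and then controlling $\pi_{1}(M)$ through the homotopy sequence of the associated circle bundle. First I would invoke Theorem~\ref{BWHOMO} to conclude that $\eta$ is regular, so that Theorem~\ref{BWT} yields a principal ${\rm{U}}(1)$-bundle $\pi \colon M \to N = M/{\rm{U}}(1)$ over a compact symplectic manifold $(N,\omega_{N})$ with $d\eta = \pi^{\ast}\omega_{N}$. Passing to a maximal compact subgroup by Montgomery's theorem \cite{MONT} and then to a semisimple one by \cite{WANG}, I may assume $G$ is compact, connected and semisimple. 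Since $G$ fixes $\eta$, it fixes the Reeb field $\xi$ and hence commutes with the ${\rm{U}}(1)$-action it generates; therefore the $G$-action descends to a transitive action on $N$, exhibiting $(N,\omega_{N})$ as a compact homogeneous symplectic $G$-manifold.

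\emph{Identifying the base.} The crux is to prove that $N$ is a complex flag manifold, and this is the step I expect to be the main obstacle. As $G$ is compact semisimple, the Whitehead lemmas give $H^{1}(\mathfrak{g}) = H^{2}(\mathfrak{g}) = 0$, so the $G$-action is Hamiltonian with a unique equivariant moment map $\mu \colon N \to \mathfrak{g}^{\ast}$ whose image is a single coadjoint orbit $\mathcal{O}$. Because $\mu$ pulls the Kirillov--Kostant--Souriau form on $\mathcal{O}$ back to the nondegenerate $\omega_{N}$, it is an immersion and hence a covering of $\mathcal{O}$; as coadjoint orbits of compact groups are simply connected generalized flag manifolds $G^{\mathbb{C}}/P$, the covering is trivial and $\mu$ is a $G$-equivariant diffeomorphism onto $N \cong G^{\mathbb{C}}/P$, which is therefore simply connected \cite{KAHLERSIMPLY}, with integrality of $[\omega_{N}]$ making it Hodge. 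This settles (2): the bundle is non-trivial because, exactly as in the proof of Proposition~\ref{CONTACTSIMPLY}, its Euler class is $-\ell\,c_{1}(N)/I(N)$ for some $\ell \in \mathbb{Z}_{>0}$, which is non-zero since $N$ is Fano.

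\emph{Fundamental group and universal cover.} For (3) I would read off $\pi_{1}(M)$ from the homotopy long exact sequence of ${\rm{U}}(1) \hookrightarrow M \to N$,
\[
\cdots \to \pi_{2}(N) \xrightarrow{\;\partial\;} \pi_{1}({\rm{U}}(1)) = \mathbb{Z} \to \pi_{1}(M) \to \pi_{1}(N) = 0,
\]
in which $\partial$ is the pairing of the Euler class $-\ell\,c_{1}(N)/I(N)$ against $\pi_{2}(N) \cong H_{2}(N,\mathbb{Z})$ (Hurewicz applies since $N$ is simply connected). The image of $\partial$ is a non-trivial subgroup of $\mathbb{Z}$, so $\pi_{1}(M)$ is finite cyclic; more precisely, since $\tfrac{1}{I(N)}c_{1}(N)$ is indivisible, the image is $\ell\mathbb{Z}$ and $\pi_{1}(M) \cong \mathbb{Z}_{\ell}$. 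Hence the universal cover $\widetilde{M} \to M$ is an $\ell$-fold cover, so $\widetilde{M}$ is compact; concretely it is the circle bundle over the same $N$ with primitive Euler class, i.e. $\widetilde{M} = Q(K_{N}^{\otimes \frac{1}{I(N)}})$, which is simply connected and, by Remark~\ref{amplecase}, carries a homogeneous Sasakian structure.

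\emph{Descending the structure.} Finally, for (1) I would observe that the deck group $\mathbb{Z}_{\ell}$ of $\widetilde{M} \to M$ is the subgroup of $\ell$-th roots of unity inside the ${\rm{U}}(1)$ fiber action, which preserves the entire Sasakian structure $(g,\phi,\xi,\eta)$ of $\widetilde{M}$; hence this structure descends to $M = \widetilde{M}/\mathbb{Z}_{\ell}$ with the original contact form $\eta$, and the $G$-action (commuting with the fiber action) descends to a transitive action by Sasaki automorphisms. Equivalently, one may apply Remark~\ref{amplecase} directly to $M = Q(L)$ with $L^{-1}$ ample over the flag base $N$. This produces the homogeneous Sasakian structure with contact form $\eta$ asserted in (1), completing the proof.
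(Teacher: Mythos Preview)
The paper does not supply a proof of this theorem; it only records the statement and refers the reader to \cite{BOYERGALICKI}. So your proposal has to be assessed on its own.

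Your overall architecture is reasonable, but there is a genuine gap at the step where you invoke Montgomery's theorem \cite{MONT} to replace $G$ by a compact (and then, via \cite{WANG}, semisimple) group. Montgomery's result requires the homogeneous space to be \emph{simply connected}; here $M$ is only assumed compact, and simple connectivity is in fact part of what item~(3) is meant to deliver. The reduction to compact $G$ is not a formality that can be skipped: take the Heisenberg nilmanifold $M=H^{3}/\Gamma$ with its left-invariant contact form $\eta=dz-x\,dy$. The quotient $G'=H^{3}/(\Gamma\cap Z(H^{3}))$ is a connected, non-compact Lie group acting transitively, effectively, and preserving $\eta$, so $(M,\eta,G')$ satisfies the paper's definition of a compact homogeneous contact manifold; yet the Boothby--Wang base is $T^{2}$, not a flag manifold, $\pi_{1}(M)\cong\Gamma$ is infinite, and no compact group acts transitively on $M$ preserving $\eta$. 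Hence the passage to compact semisimple $G$ is precisely the substantive step of the whole theorem, and whatever argument \cite{BOYERGALICKI} use (or whatever extra hypothesis they impose) cannot be replaced by a bare citation of \cite{MONT}. Once that step is secured your moment-map identification of $N$ with a coadjoint orbit, and the homotopy-sequence computation of $\pi_{1}(M)$, are fine.

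A secondary inaccuracy: you repeatedly assert that the Euler class of $M\to N$ equals $-\ell\,c_{1}(N)/I(N)$, but Proposition~\ref{CONTACTSIMPLY} gives this only under the K\"{a}hler--Einstein hypothesis on $[\omega_{N}]$, which the theorem does not assume. In general the Euler class is $c_{1}(L)$ for an arbitrary negative $L\in\mathrm{Pic}(N)$ and, when $b_{2}(N)>1$, need not be proportional to $c_{1}(N)$. Finiteness of $\pi_{1}(M)$ and compactness of $\widetilde{M}$ still follow once $N$ is known to be a flag manifold (the Euler class is nonzero because $L^{-1}$ is ample, so the boundary map $\pi_{2}(N)\to\mathbb{Z}$ has nontrivial image), but your explicit identifications $\pi_{1}(M)\cong\mathbb{Z}_{\ell}$ and $\widetilde{M}=Q(K_{N}^{\otimes 1/I(N)})$ are specific to the Einstein situation.
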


The result above provides a complete description of a compact homogeneous contact manifold $(M,\eta,G)$ as being a quotient space 

\begin{center}
    
$M = \widetilde{M}/\Gamma$,     
    
\end{center}
where $\widetilde{M} = Q(L)$, for some ample line bundle $L \in {\text{Pic}}(N)$, and $\Gamma = \mathbb{Z}_{\ell} \subset {\rm{U}}(1) \hookrightarrow \widetilde{M}$ is a cyclic group given by the deck transformations of the universal cover $\widetilde{M}$, see for instance \cite{BOYERGALICKI}. From this, we have 

\begin{center}

$M = Q(L)/\mathbb{Z}_{\ell} = \ell \cdot Q(L) = \underbrace{Q(L) + \cdots + Q(L)}_{\ell-{\text{times}}}$,    

\end{center}
see for instance \cite{TOROIDAL}, \cite[Chapter 2]{BLAIR}. In this paper we also shall use the notation $M = Q(L^{\otimes \ell})$.\\

As we have seen, in order to describe the compact homogeneous contact manifolds which defines homogeneous Sasaki-Einstein manifolds we need to understand the homogeneous contact manifold determined by the circle bundle $Q(L)$, where $L$ is the line bundle
\begin{equation}
\label{maslovcovering}
L = K_{N}^{\otimes \frac{1}{I(N)}},
\end{equation}
over a complex flag manifold $(N,\omega_{N})$. Hence, our main task in the next section will be to provide a complete description of holomorphic line bundles and its associated circle bundles over complex flag manifolds.


\section{Line bundles and circle bundles over complex flag manifolds} 
\label{sec3}

This section is devoted to provide some basic results about holomorphic line bundles and principal circle bundles over flag manifolds. The subjects will be presented as follow:

In Subsection \ref{subsec3.1}, we describe how we can compute the Chern class for homolorphic line bundles over flag manifolds. The main idea is to find a suitable Chern connection for homogeneous holomorphic line bundles. In Subsection \ref{subsec3.2}, we describe how we can compute the Cartan-Ehresmann connection (gauge field) for principal circle bundles over flag manifolds. The main idea is to use the characterization of holomorphic line bundles as associated bundles of principal ${\rm{U}}(1)$-bundles. In Subsection \ref{subsec3.3}, we provide concrete examples which illustrate the content developed in the previous subsections. The main references for this section are \cite{PARABOLICTHEORY}, \cite{AZAD}, \cite{EDER}, \cite{TOROIDAL}, \cite[Chapter 2]{BLAIR}, \cite[Appendix D.1]{EDERTHESIS}. 

\subsection{Line bundles over complex flag manifolds}
\label{subsec3.1}
We start by collecting some basic facts about simple Lie algebras and simple Lie groups. Let $\mathfrak{g}^{\mathbb{C}}$ be a complex simple Lie algebra, by fixing a  Cartan subalgebra $\mathfrak{h}$ and a simple root system $\Sigma \subset \mathfrak{h}^{\ast}$, we have a decomposition of $\mathfrak{g}^{\mathbb{C}}$ given by
\begin{center}
$\mathfrak{g}^{\mathbb{C}} = \mathfrak{n}^{-} \oplus \mathfrak{h} \oplus \mathfrak{n}^{+}$, 
\end{center}
where $\mathfrak{n}^{-} = \sum_{\alpha \in \Pi^{-}}\mathfrak{g}_{\alpha}$ and $\mathfrak{n}^{+} = \sum_{\alpha \in \Pi^{+}}\mathfrak{g}_{\alpha}$, here we denote by $\Pi = \Pi^{+} \cup \Pi^{-}$ the root system associated to the simple root system $\Sigma = \{\alpha_{1},\ldots,\alpha_{l}\} \subset \mathfrak{h}^{\ast}$. We also denote by $\kappa$ the Cartan-Killing form of $\mathfrak{g}^{\mathbb{C}}$.

Now, given $\alpha \in \Pi^{+}$, we have $h_{\alpha} \in \mathfrak{h}$, such  that $\alpha = \kappa(\cdot,h_{\alpha})$. We can choose $x_{\alpha} \in \mathfrak{g}_{\alpha}$ and $y_{\alpha} \in \mathfrak{g}_{-\alpha}$ such that $[x_{\alpha},y_{\alpha}] = h_{\alpha}$. For every $\alpha \in \Sigma$, we can set 
$$h_{\alpha}^{\vee} = \frac{2}{\kappa(h_{\alpha},h_{\alpha})}h_{\alpha},$$ 
from this we have the fundamental weights $\{\omega_{\alpha} \ | \ \alpha \in \Sigma\} \subset \mathfrak{h}^{\ast}$, where $\omega_{\alpha}(h_{\beta}^{\vee}) = \delta_{\alpha \beta}$, $\forall \alpha, \beta \in \Sigma$. We denote by $$\Lambda_{\mathbb{Z}_{\geq 0}}^{\ast} = \bigoplus_{\alpha \in \Sigma}\mathbb{Z}_{\geq 0}\omega_{\alpha}$$ the set of integral dominant weights of $\mathfrak{g}^{\mathbb{C}}$.

From the Lie algebra representation theory, given $\mu \in \Lambda_{\mathbb{Z}_{\geq 0}}^{\ast}$ we have an irreducible $\mathfrak{g}^{\mathbb{C}}$-module $V(\mu)$ with highest weight $\mu$, we denote by $v_{\mu}^{+} \in V(\mu)$ the highest weight vector associated to $\mu \in  \Lambda_{\mathbb{Z}_{\geq 0}}^{\ast}$.

Let $G^{\mathbb{C}}$ be a connected, simply connected, and complex Lie group with simple Lie algebra $\mathfrak{g}^{\mathbb{C}}$, and consider $G \subset G^{\mathbb{C}}$ as being a compact real form of $G^{\mathbb{C}}$. Given a parabolic Lie subgroup $P \subset G^{\mathbb{C}}$, without loss of generality we can suppose

\begin{center}
$P  = P_{\Theta}$, \ for some \ $\Theta \subseteq \Sigma$.
\end{center}

By definition we have $P_{\Theta} = N_{G^{\mathbb{C}}}(\mathfrak{p}_{\Theta})$, where ${\text{Lie}}(P_{\Theta}) = \mathfrak{p}_{\Theta} \subset \mathfrak{g}^{\mathbb{C}}$ is given by

\begin{center}

$\mathfrak{p}_{\Theta} = \mathfrak{n}^{+} \oplus \mathfrak{h} \oplus \mathfrak{n}(\Theta)^{-}$, \ with \ $\mathfrak{n}(\Theta)^{-} = \displaystyle \sum_{\alpha \in \langle \Theta \rangle^{-}} \mathfrak{g}_{\alpha}$.

\end{center}
For our purposes it will be useful to consider the following basic subgroups:

\begin{center}

$T^{\mathbb{C}} \subset B \subset P \subset G^{\mathbb{C}}$.

\end{center}
We have for each element in the chain above of Lie subgroups the following characterization: 

\begin{itemize}

\item $T^{\mathbb{C}} = \exp(\mathfrak{h})$,  \ \ (complex torus)

\item $B = N^{+}T^{\mathbb{C}}$, where $N^{+} = \exp(\mathfrak{n}^{+})$, \ \ (Borel subgroup)

\item $P = P_{\Theta} = N_{G^{\mathbb{C}}}(\mathfrak{p}_{\Theta})$, for some $\Theta \subset \Sigma \subset \mathfrak{h}^{\ast}$. \ \ (parabolic subgroup)

\end{itemize}
Associated to the data above we will be concerned to study the {\it generalized complex flag manifold} defined by  
$$
X_{P} = G^{\mathbb{C}} / P = G /G \cap P.
$$
The following theorem allows us to describe all $G$-invariant K\"{a}hler structures on $X_{P}$.
\begin{theorem}[Azad-Biswas, \cite{AZAD}]
\label{AZADBISWAS}
Let $\omega \in \Omega^{1,1}(X_{P})^{G}$ be a closed invariant real $(1,1)$-form, then we have

\begin{center}

$\pi^{\ast}\omega = \sqrt{-1}\partial \overline{\partial}\varphi$,

\end{center}
where $\pi \colon G^{\mathbb{C}} \to X_{P}$, and $\varphi \colon G^{\mathbb{C}} \to \mathbb{R}$ is given by 
\begin{center}
$\varphi(g) = \displaystyle \sum_{\alpha \in \Sigma \backslash \Theta}c_{\alpha}\log||gv_{\omega_{\alpha}}^{+}||$, 
\end{center}
with $c_{\alpha} \in \mathbb{R}_{\geq 0}$, $\forall \alpha \in \Sigma \backslash \Theta$. Conversely, every function $\varphi$ as above defines a closed invariant real $(1,1)$-form $\omega_{\varphi} \in \Omega^{1,1}(X_{P})^{G}$. Moreover, if $c_{\alpha} > 0$,  $\forall \alpha \in \Sigma \backslash \Theta$, then $\omega_{\varphi}$ defines a K\"{a}hler form on $X_{P}$.

\end{theorem}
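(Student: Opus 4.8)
The plan is to reduce the whole statement to the representation-theoretic building blocks attached to the fundamental weights $\omega_\alpha$, $\alpha \in \Sigma\setminus\Theta$, via the Borel--Weil orbit embeddings, and then to treat the two directions separately.

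First I would build the model forms. For $\alpha\in\Sigma\setminus\Theta$ the highest weight vector $v^+_{\omega_\alpha}$ spans a $P$-stable line: it is killed by $\mathfrak{n}^+$, scaled by $\mathfrak{h}$, and for $\beta\in\Theta$ one has $\omega_\alpha(h_\beta^\vee)=\delta_{\alpha\beta}=0$ (since $\alpha\neq\beta$), so $y_\beta v^+_{\omega_\alpha}=0$ and hence $\mathfrak{p}_\Theta\cdot v^+_{\omega_\alpha}\subseteq\mathbb{C}v^+_{\omega_\alpha}$. Therefore $g\mapsto[gv^+_{\omega_\alpha}]$ is constant on $P$-cosets and defines a $G$-equivariant holomorphic map $\iota_\alpha\colon X_P\to\mathbb{P}(V(\omega_\alpha))$. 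Choosing a $G$-invariant Hermitian inner product on $V(\omega_\alpha)$ (so that $G$ acts unitarily and $\omega_{FS}$ is $G$-invariant), I set $\omega_\alpha:=\iota_\alpha^*\omega_{FS}$; this is a closed, real, $G$-invariant, semi-positive $(1,1)$-form on $X_P$. Since $\log\|gv^+_{\omega_\alpha}\|^2$ is the standard local potential of $\omega_{FS}$ pulled back along the holomorphic lift $g\mapsto gv^+_{\omega_\alpha}$, and since $P$ only rescales $v^+_{\omega_\alpha}$ by a character (so $\log\|gpv^+_{\omega_\alpha}\|$ differs from $\log\|gv^+_{\omega_\alpha}\|$ by a term independent of $g$), the function $\varphi_\alpha(g)=\log\|gv^+_{\omega_\alpha}\|$ is a global quasi-potential: up to a fixed positive constant, $\pi^*\omega_\alpha=\sqrt{-1}\partial\overline{\partial}\varphi_\alpha$. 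This already yields the converse: for any $c_\alpha\geq0$ the function $\varphi=\sum_\alpha c_\alpha\varphi_\alpha$ gives $\pi^*\omega_\varphi=\sqrt{-1}\partial\overline{\partial}\varphi$ with $\omega_\varphi=\sum_\alpha c_\alpha\omega_\alpha\in\Omega^{1,1}(X_P)^G$ closed and real.

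For the forward direction I would compare a given closed invariant real $(1,1)$-form $\omega$ with the span of the $\omega_\alpha$. The topological input is that $\{[\omega_\alpha]:\alpha\in\Sigma\setminus\Theta\}$ is a basis of $H^2(X_P,\mathbb{R})=H^{1,1}(X_P)$: the Bruhat decomposition gives $b_2(X_P)=|\Sigma\setminus\Theta|$, and the characters $\chi_{\omega_\alpha}$ of $P$ produce line bundles generating $\Pic(X_P)$ with first Chern classes proportional to the $[\omega_\alpha]$. Hence $[\omega]=\sum_\alpha c_\alpha[\omega_\alpha]$ for unique real $c_\alpha$, and $\omega-\sum_\alpha c_\alpha\omega_\alpha$ is an invariant closed real $(1,1)$-form whose cohomology class vanishes.

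The hard part will be the rigidity step: showing that an invariant closed form with trivial class is actually zero, which upgrades the cohomological identity to the pointwise identity $\omega=\sum_\alpha c_\alpha\omega_\alpha$. Here I would invoke that for the compact group $G$ the complex of invariant forms computes $H^*(X_P,\mathbb{R})$ (by averaging), so the difference equals $d\beta$ for an invariant $1$-form $\beta$; but $X_P=G/K$ with $K=G\cap P$ carries no nonzero invariant $1$-forms, because the isotropy representation on $\mathfrak{g}/\mathfrak{k}\cong\sum_{\alpha\in\Pi\setminus\langle\Theta\rangle}\mathfrak{g}_\alpha$ has only nonzero weights and hence no trivial summand. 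Thus $\beta=0$, and $\omega=\sum_\alpha c_\alpha\omega_\alpha=\sqrt{-1}\partial\overline{\partial}\varphi$ with $\varphi=\sum_\alpha c_\alpha\varphi_\alpha$, the normalization constants being absorbed into the $c_\alpha$; the $c_\alpha$ are $\geq0$ precisely in the semi-positive cone. Finally, for the Kähler statement, when all $c_\alpha>0$ I would note that the product map $\prod_{\alpha\in\Sigma\setminus\Theta}\iota_\alpha\colon X_P\hookrightarrow\prod_\alpha\mathbb{P}(V(\omega_\alpha))$ is a holomorphic embedding (the associated bundle being very ample, giving the minimal projective embedding of the flag manifold), so the pullback of the product Fubini--Study form, equal to $\sum_\alpha\omega_\alpha$ up to positive scaling, is strictly positive; any combination with all $c_\alpha>0$ then dominates it, and so $\omega_\varphi$ is Kähler.
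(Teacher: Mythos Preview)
The paper does not supply a proof of this theorem; it is quoted as a result of Azad--Biswas \cite{AZAD} and used throughout as a black box (the text immediately passes to Remark~\ref{innerproduct} and then to the applications in Propositions~\ref{C8S8.2Sub8.2.3P8.2.7} and~\ref{C8S8.2Sub8.2.3P8.2.6}). So there is no in-paper argument to compare against.

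That said, your reconstruction is essentially the standard route and is sound. A few small points worth tightening: (i) the step $y_\beta v^+_{\omega_\alpha}=0$ for $\beta\in\Theta$ is correct but deserves one more word---it follows from $\mathfrak{sl}_2$-theory for the triple $(x_\beta,h_\beta^\vee,y_\beta)$, since $v^+_{\omega_\alpha}$ is a highest-weight vector of $h_\beta^\vee$-weight $0$ and hence spans the trivial module; (ii) the statement as recorded in the paper asserts $c_\alpha\in\mathbb{R}_{\geq 0}$ for an \emph{arbitrary} closed invariant real $(1,1)$-form, which is of course only true on the (semi)positive cone---you note this correctly, and the imprecision is in the quoted statement, not in your argument; (iii) in the K\"ahler step, make explicit that each $\omega_\alpha$ is semi-positive (as a pullback of Fubini--Study) and that the product map $\prod_\alpha\iota_\alpha$ is an embedding with injective differential, so that $\sum_\alpha\omega_\alpha>0$; then $\sum_\alpha c_\alpha\omega_\alpha\geq(\min_\alpha c_\alpha)\sum_\alpha\omega_\alpha>0$. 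With these clarifications your sketch is a complete proof.
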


\begin{remark}
\label{innerproduct}
It is worth pointing out that the norm $|| \cdot ||$ in the last theorem is a norm induced by some fixed $G$-invariant inner product $\langle \cdot, \cdot \rangle_{\alpha}$ on $V(\omega_{\alpha})$, $\forall \alpha \in \Sigma \backslash \Theta$. 
\end{remark}

Let $X_{P}$ be a flag manifold associated to some parabolic Lie subgroup $P = P_{\Theta} \subset G^{\mathbb{C}}$. According to Theorem \ref{AZADBISWAS}, by taking a fundamental weight $\omega_{\alpha} \in \Lambda_{\mathbb{Z}_{\geq0}}^{\ast}$, such that $\alpha \in \Sigma \backslash \Theta$, we can associate to this weight a closed real $G$-invariant $(1,1)$-form $\Omega_{\alpha} \in \Omega^{1,1}(X_{P})^{G}$ which satisfies 
\begin{equation}
\label{basicforms}
\pi^{\ast}\Omega_{\alpha} = \sqrt{-1}\partial \overline{\partial} \varphi_{\omega_{\alpha}},
\end{equation}
where $\pi \colon G^{\mathbb{C}} \to G^{\mathbb{C}} / P = X_{P}$, and $\varphi_{\omega_{\alpha}}(g) = \displaystyle \frac{1}{2\pi}\log||gv_{\omega_{\alpha}}^{+}||^{2}$, $\forall g \in G^{\mathbb{C}}$. 

The characterization for $G$-invariant real $(1,1)$-forms on $X_{P}$ provided by Theorem \ref{AZADBISWAS} can be used to compute the Chern class of holomorphic line bundles over flag manifolds, let us briefly describe how it can be done. Since each $\omega_{\alpha} \in \Lambda_{\mathbb{Z}_{\geq 0}}^{\ast}$ is an integral dominant weight, we can associate to it a holomorphic character $\chi_{\omega_{\alpha}} \colon T^{\mathbb{C}} \to \mathbb{C}^{\times}$, such that $(d\chi_{\omega_{\alpha}})_{e} = \omega_{\alpha}$, see for instance \cite[p. 466]{TAYLOR}. Given a parabolic Lie subgroup $P \subset G^{\mathbb{C}}$, we can take an extension $\chi_{\omega_{\alpha}} \colon P \to \mathbb{C}^{\times}$ and define a holomorphic line bundle by

\begin{equation}
\label{C8S8.2Sub8.2.1Eq8.2.4}
L_{\chi_{\omega_{\alpha}}} =  G^{\mathbb{C}} \times_{\chi_{\omega_{\alpha}}} \mathbb{C}_{-\omega_{\alpha}},
\end{equation}\\
i.e., as a vector bundle associated to the principal $P$-bundle $P \hookrightarrow G^{\mathbb{C}} \to G^{\mathbb{C}}/P$.
\begin{remark}
\label{remarkcocycle}
In the description above we consider $\mathbb{C}_{-\omega_{\alpha}}$ as a $P$-space with the action $pz = \chi_{\omega_{\alpha}}(p)^{-1}z$, $\forall p \in P$, and $\forall z \in \mathbb{C}$. Therefore, in terms of $\check{C}$ech cocycles, if we consider an open cover $X_{P} = \bigcup_{i \in I}U_{i}$ and
$G^{\mathbb{C}} = \{(U_{i})_{i \in I}, \psi_{ij} \colon U_{i} \cap U_{j} \to P\}$, then we have 

$$L_{\chi_{\omega_{\alpha}}} = \Big \{(U_{i})_{i \in I},\chi_{\omega_{\alpha}}^{-1} \circ \psi_{i j} \colon U_{i} \cap U_{j} \to \mathbb{C}^{\times} \Big \}.$$
Thus, it follows that $L_{\chi_{\omega_{\alpha}}} = \{g_{ij}\} \in \check{H}^{1}(X_{P},\mathscr{O}_{X_{P}}^{\ast})$, with $g_{ij} = \chi_{\omega_{\alpha}}^{-1} \circ \psi_{i j}$, where $i,j \in I$.
\end{remark}
For us it will be important to consider the following results, see for instance \cite{AZAD} and \cite{TOROIDAL}.
\begin{proposition}
\label{C8S8.2Sub8.2.3P8.2.7}
Let $X_{P}$ be a flag manifold associated to some parabolic Lie subgroup $P = P_{\Theta}\subset G^{\mathbb{C}}$. Then for every fundamental weight $\omega_{\alpha} \in \Lambda_{\mathbb{Z}_{\geq 0}}^{\ast}$, such that $\alpha \in \Sigma \backslash \Theta$, we have
\begin{equation}
\label{C8S8.2Sub8.2.3Eq8.2.28}
c_{1}(L_{\chi_{\omega_{\alpha}}}) = [\Omega_{\alpha}].
\end{equation}

\end{proposition}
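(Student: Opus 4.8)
The plan is to compute $c_{1}(L_{\chi_{\omega_{\alpha}}})$ by Chern--Weil theory: I would equip the holomorphic line bundle $L_{\chi_{\omega_{\alpha}}}$ with a canonical $G$-invariant Hermitian metric built from the highest weight vector $v_{\omega_{\alpha}}^{+}$ and the fixed $G$-invariant inner product $\langle \cdot, \cdot\rangle_{\alpha}$ on $V(\omega_{\alpha})$ (Remark \ref{innerproduct}), compute the curvature of its Chern connection, and recognize the resulting representative as $\Omega_{\alpha}$ via Equation \ref{basicforms}. Geometrically this metric is the pullback of the dual tautological metric under the Borel--Weil embedding $X_{P} \hookrightarrow \mathbb{P}(V(\omega_{\alpha}))$, $gP \mapsto [g v_{\omega_{\alpha}}^{+}]$, whose stabilizer of the line $[v_{\omega_{\alpha}}^{+}]$ is exactly $P_{\Theta}$, so the construction is forced on us.

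First I would construct the metric explicitly. Fixing a local holomorphic section $s_{U} \colon U \to G^{\mathbb{C}}$ of the principal $P$-bundle $\pi \colon G^{\mathbb{C}} \to X_{P}$, the element $e_{U} = [s_{U}, 1]$ is a local holomorphic frame of $L_{\chi_{\omega_{\alpha}}} = G^{\mathbb{C}} \times_{\chi_{\omega_{\alpha}}} \mathbb{C}_{-\omega_{\alpha}}$, and I would define
\[ h(e_{U}, e_{U}) = \big\| s_{U} v_{\omega_{\alpha}}^{+} \big\|^{-2}. \]
The main point to verify is that this is a well-defined global Hermitian metric, i.e. that it is compatible with the transition cocycle $g_{ij} = \chi_{\omega_{\alpha}}^{-1} \circ \psi_{ij}$ of Remark \ref{remarkcocycle}. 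This reduces to the $P$-equivariance identity $p\, v_{\omega_{\alpha}}^{+} = \chi_{\omega_{\alpha}}(p)\, v_{\omega_{\alpha}}^{+}$ for all $p \in P_{\Theta}$: under a change of frame $s_{U} = s_{V}\psi_{VU}$ one gets $\|s_{U} v_{\omega_{\alpha}}^{+}\|^{2} = |\chi_{\omega_{\alpha}}(\psi_{VU})|^{2}\, \|s_{V} v_{\omega_{\alpha}}^{+}\|^{2}$, which matches exactly the factor $|g_{VU}|^{2}$. This equivariance is precisely the condition that makes $\chi_{\omega_{\alpha}}$ extend to $P_{\Theta}$ (equivalently $\omega_{\alpha}(h_{\beta}^{\vee}) = \delta_{\alpha\beta} = 0$ for $\beta \in \Theta$), and it is the reason we must take $\alpha \in \Sigma \setminus \Theta$; checking it carefully is the main technical obstacle, although it is standard Borel--Weil bookkeeping.

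With the metric in hand, I would invoke the standard formula for the curvature of the Chern connection of a holomorphic Hermitian line bundle: in the holomorphic frame $e_{U}$, the curvature is $F_{\nabla} = -\partial\overline{\partial}\log h(e_{U}, e_{U}) = \partial\overline{\partial}\log\|s_{U} v_{\omega_{\alpha}}^{+}\|^{2}$. Hence by Chern--Weil, $c_{1}(L_{\chi_{\omega_{\alpha}}}) = \big[\tfrac{\sqrt{-1}}{2\pi}F_{\nabla}\big] = \big[\tfrac{\sqrt{-1}}{2\pi}\partial\overline{\partial}\log\|s_{U} v_{\omega_{\alpha}}^{+}\|^{2}\big]$.

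Finally I would match this with $\Omega_{\alpha}$. Pulling back Equation \ref{basicforms} along $s_{U}$ and using $\varphi_{\omega_{\alpha}}(g) = \tfrac{1}{2\pi}\log\|g v_{\omega_{\alpha}}^{+}\|^{2}$ gives $\Omega_{\alpha}|_{U} = s_{U}^{\ast}\pi^{\ast}\Omega_{\alpha} = \tfrac{\sqrt{-1}}{2\pi}\partial\overline{\partial}\log\|s_{U} v_{\omega_{\alpha}}^{+}\|^{2}$, which is exactly the local representative of $\tfrac{\sqrt{-1}}{2\pi}F_{\nabla}$ computed above. Therefore $c_{1}(L_{\chi_{\omega_{\alpha}}}) = [\Omega_{\alpha}]$, as claimed. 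Beyond the equivariance verification, the only thing needing care is the bookkeeping of signs and conventions (the choice of exponent $-2$ in the metric and the sign in the $P$-action on $\mathbb{C}_{-\omega_{\alpha}}$ fixed in Remark \ref{remarkcocycle}), which must be arranged so that the class comes out as $+[\Omega_{\alpha}]$ rather than its negative.
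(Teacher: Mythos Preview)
Your proposal is correct and follows essentially the same approach as the paper: both define the Hermitian metric on $L_{\chi_{\omega_{\alpha}}}$ by $h(e_{U},e_{U}) = \|s_{U} v_{\omega_{\alpha}}^{+}\|^{-2}$, verify the cocycle compatibility via the equivariance $p\,v_{\omega_{\alpha}}^{+} = \chi_{\omega_{\alpha}}(p)\,v_{\omega_{\alpha}}^{+}$, and then read off $\tfrac{\sqrt{-1}}{2\pi}F_{\nabla} = \Omega_{\alpha}$ from the quasi-potential formula. Your additional framing via the Borel--Weil embedding and the dual tautological metric is a nice conceptual gloss, but the computational content is identical to the paper's argument.
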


\begin{proof}
Consider an open cover $X_{P} = \bigcup_{i \in I} U_{i}$ which trivializes both $P \hookrightarrow G^{\mathbb{C}} \to X_{P}$ and $L_{\chi_{\omega_{\alpha}}} \to X_{P}$, such that $\alpha \in \Sigma \backslash \Theta$. Now, take a collection of local sections $(s_{i})_{i \in I}$, such that $s_{i} \colon U_{i} \to G^{\mathbb{C}}$. From these, we define $q_{i} \colon U_{i} \to \mathbb{R}_{+}$ by setting
\begin{equation}
\label{functionshermitian}
q_{i} =  {\mathrm{e}}^{-2\pi \varphi_{\omega_{\alpha}} \circ s_{i}} = \frac{1}{||s_{i}v_{\omega_{\alpha}}^{+}||^{2}},
\end{equation}
for every $i \in I$. These functions $(q_{i})_{i \in I}$ satisfy $q_{j} = |\chi_{\omega_{\alpha}}^{-1} \circ \psi_{ij}|^{2}q_{i}$ on $U_{i} \cap U_{j} \neq \emptyset$, here we have used that $s_{j} = s_{i}\psi_{ij}$ on $U_{i} \cap U_{j} \neq \emptyset$, and $pv_{\omega_{\alpha}}^{+} = \chi_{\omega_{\alpha}}(p)v_{\omega_{\alpha}}^{+}$, for every $p \in P$ and $\alpha \in \Sigma \backslash \Theta$. Hence, we have a collection of functions $(q_{i})_{i \in I}$ which satisfy on $U_{i} \cap U_{j} \neq \emptyset$ the following 
\begin{equation}
\label{collectionofequ}
q_{j} = |g_{ij}|^{2}q_{i},
\end{equation}
such that $g_{ij} = \chi_{\omega_{\alpha}}^{-1} \circ \psi_{i j}$, where $i,j \in I$.

From the collection of smooth functions described above we can define a Hermitian structure $H$ on $L_{\chi_{\omega_{\alpha}}}$ by taking on each trivialization $f_{i} \colon L_{\chi_{\omega_{\alpha}}} \to U_{i} \times \mathbb{C}$ a metric defined by
\begin{equation}
\label{hermitian}
H((x,v),(x,w)) = q_{i}(x) v\overline{w},
\end{equation}
for $(x,v),(x,w) \in L_{\chi_{\omega_{\alpha}}}|_{U_{i}} \cong U_{i} \times \mathbb{C}$. The Hermitian metric above induces a Chern connection $\nabla = d + \partial \log H$ with curvature $F_{\nabla}$ satisfying 
$$
\displaystyle \frac{\sqrt{-1}}{2\pi}F_{\nabla} = \Omega_{\alpha},
$$
thus it follows that $c_{1}(L_{\chi_{\omega_{\alpha}}}) = [\Omega_{\alpha}]$.
\end{proof}
\begin{proposition}
\label{C8S8.2Sub8.2.3P8.2.6}
Let $X_{P}$ be a flag manifold associated to some parabolic Lie subgroup $P = P_{\Theta}\subset G^{\mathbb{C}}$. Then, we have
\begin{equation}
\label{picardeq}
{\text{Pic}}(X_{P}) = H^{1,1}(X_{P},\mathbb{Z}) = H^{2}(X_{P},\mathbb{Z}) = \displaystyle \bigoplus_{\alpha \in \Sigma \backslash \Theta}\mathbb{Z}[\Omega_{\alpha}].
\end{equation}
\end{proposition}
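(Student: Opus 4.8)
The plan is to establish the three equalities by working from right to left: first identify $\text{Pic}(X_P)$ with the free abelian group on the classes $[\Omega_\alpha]$ via the character description of homogeneous line bundles, and then match this with $H^2(X_P,\mathbb{Z})$ and $H^{1,1}(X_P,\mathbb{Z})$ through the exponential sheaf sequence together with the Hodge type of $X_P$.

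First I would invoke the exponential sheaf sequence $0 \to \mathbb{Z} \to \mathcal{O}_{X_P} \to \mathcal{O}_{X_P}^{\ast} \to 0$, whose long exact cohomology sequence contains
\begin{equation}
H^{1}(X_{P},\mathcal{O}_{X_{P}}) \longrightarrow \text{Pic}(X_{P}) \xrightarrow{\ c_{1}\ } H^{2}(X_{P},\mathbb{Z}) \longrightarrow H^{2}(X_{P},\mathcal{O}_{X_{P}}).
\end{equation}
The key structural input is that $X_P$ is a rational projective homogeneous space, hence admits a Bruhat--Schubert cell decomposition into cells of even real dimension indexed by the minimal coset representatives of $W/W_{P}$. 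This forces $H^{\ast}(X_{P},\mathbb{Z})$ to be torsion-free and concentrated in even degrees, and the Hodge structure to be of pure Tate type, so that $H^{k}(X_{P},\mathbb{C}) = H^{k/2,k/2}(X_{P})$ for $k$ even and vanishes otherwise. In particular $H^{1}(X_{P},\mathcal{O}_{X_{P}}) = H^{0,1} = 0$ and $H^{2}(X_{P},\mathcal{O}_{X_{P}}) = H^{0,2} = 0$ (alternatively one cites Bott's vanishing theorem), whence $c_{1}$ is an isomorphism $\text{Pic}(X_{P}) \xrightarrow{\sim} H^{2}(X_{P},\mathbb{Z})$; and since $H^{2}(X_{P},\mathbb{C}) = H^{1,1}(X_{P})$, every integral degree-two class is of type $(1,1)$, giving $H^{2}(X_{P},\mathbb{Z}) = H^{1,1}(X_{P},\mathbb{Z})$.

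It then remains to identify this group with $\bigoplus_{\alpha \in \Sigma \backslash \Theta}\mathbb{Z}[\Omega_{\alpha}]$, which I would do through the character description of $\text{Pic}(X_{P})$. Since $G^{\mathbb{C}}$ is simply connected, every holomorphic line bundle on $X_{P} = G^{\mathbb{C}}/P$ is homogeneous and thus isomorphic to an associated bundle $L_{\chi} = G^{\mathbb{C}} \times_{\chi} \mathbb{C}$ for a unique character $\chi \in X^{\ast}(P)$, so $\chi \mapsto L_{\chi}$ is an isomorphism $X^{\ast}(P) \xrightarrow{\sim} \text{Pic}(X_{P})$. Restricting characters to $T^{\mathbb{C}}$ identifies $X^{\ast}(P)$ with the sublattice of the weight lattice $\Lambda = \bigoplus_{\alpha \in \Sigma}\mathbb{Z}\omega_{\alpha}$ consisting of weights $\mu$ with $\langle \mu, \alpha^{\vee}\rangle = 0$ for all $\alpha \in \Theta$, i.e.\ the weights that extend across the Levi factor of $P$; this sublattice is precisely $\bigoplus_{\alpha \in \Sigma \backslash \Theta}\mathbb{Z}\omega_{\alpha}$. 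Under this identification $\omega_{\alpha} \mapsto L_{\chi_{\omega_{\alpha}}}$, and Proposition \ref{C8S8.2Sub8.2.3P8.2.7} yields $c_{1}(L_{\chi_{\omega_{\alpha}}}) = [\Omega_{\alpha}]$, so the classes $[\Omega_{\alpha}]$ with $\alpha \in \Sigma \backslash \Theta$ form a $\mathbb{Z}$-basis of $\text{Pic}(X_{P}) \cong H^{2}(X_{P},\mathbb{Z})$, closing the chain of equalities.

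The main obstacle, and the step demanding the most care, is the pair of structural facts about flag manifolds: that every line bundle is homogeneous with $\text{Pic}(X_{P}) \cong X^{\ast}(P)$, and that the cohomology is torsion-free, even, and of pure Tate type so that $H^{1}(\mathcal{O}_{X_{P}})$ and $H^{2}(\mathcal{O}_{X_{P}})$ vanish. As an independent cross-check that the ranks agree, I would note that the number of length-one minimal representatives of $W/W_{P}$ equals $|\Sigma \backslash \Theta|$, so $b_{2}(X_{P}) = |\Sigma \backslash \Theta|$, matching the number of generators; their $\mathbb{Z}$-linear independence can be verified directly by pairing $c_{1}(L_{\chi_{\omega_{\alpha}}})$ against the Schubert curves $C_{\beta} \cong \mathbb{P}^{1}$, since $\deg(L_{\chi_{\omega_{\alpha}}}|_{C_{\beta}}) = \langle \omega_{\alpha}, \beta^{\vee}\rangle = \delta_{\alpha\beta}$ exhibits the pairing matrix as the identity.
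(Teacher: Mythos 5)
Your proof is correct, but it reaches the chain of equalities by a genuinely different route than the paper. The paper works topologically: it computes $\pi_{2}(X_{P}) \cong \pi_{1}(T(\Sigma\backslash\Theta)^{\mathbb{C}}) \cong \mathbb{Z}^{\#(\Sigma\backslash\Theta)}$, invokes Hurewicz to get $H_{2}(X_{P},\mathbb{Z})$, pins down the dual basis of $H^{2}(X_{P},\mathbb{Z})$ by pairing $c_{1}(L_{\chi_{\omega_{\alpha}}})$ against the curves $\mathbb{P}^{1}_{\beta} = \overline{\exp(\mathfrak{g}_{-\beta})x_{0}}$, and then obtains ${\text{Pic}}(X_{P}) = H^{1,1}(X_{P},\mathbb{Z})$ from the Lefschetz theorem on $(1,1)$-classes together with $b_{1}(X_{P}) = 0$. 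You instead run the exponential sheaf sequence directly, using the Bruhat cell decomposition to force even, torsion-free cohomology of pure Tate type (so $H^{1}(\mathcal{O}_{X_{P}}) = H^{2}(\mathcal{O}_{X_{P}}) = 0$ and $c_{1}$ is an isomorphism), and you identify the generators through the character lattice ${\text{Pic}}(X_{P}) \cong X^{\ast}(P) \cong \bigoplus_{\alpha\in\Sigma\backslash\Theta}\mathbb{Z}\omega_{\alpha}$ rather than through $\pi_{2}$; the Schubert-curve pairing, which is the paper's main tool for the direct-sum decomposition, appears in your argument only as a cross-check on linear independence. Your approach buys a cleaner, purely algebro-geometric statement that needs no homotopy theory and makes the isomorphism ${\text{Pic}}(X_{P}) \cong X^{\ast}(P)$ explicit (which is conceptually closer to the representation-theoretic spirit of the rest of the paper); the paper's approach buys the explicit computation of $\pi_{2}(X_{P})$, which it reuses later when analyzing circle bundles via the boundary map $\Delta_{Q} \in {\rm{Hom}}(\pi_{2}(X_{P}),\mathbb{Z})$. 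The two nontrivial inputs you rely on --- that every line bundle on $G^{\mathbb{C}}/P$ is homogeneous when $G^{\mathbb{C}}$ is simply connected, and that the Bruhat decomposition yields pure Tate-type Hodge structure --- are both standard and correctly stated, so there is no gap.
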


\begin{proof}

Let us sketch the proof. The last equality in the right side of $\ref{picardeq}$ follows from the following facts:

\begin{itemize}

\item $\pi_{2}(X_{P}) \cong \pi_{1}(T(\Sigma \backslash \Theta)^{\mathbb{C}}) = \mathbb{Z}^{\#(\Sigma \backslash \Theta)}$, where

$$T(\Sigma \backslash \Theta)^{\mathbb{C}} = \exp \Big \{ \displaystyle \sum_{\alpha \in  \Sigma \backslash \Theta}a_{\alpha}h_{\alpha} \ \Big | \ a_{\alpha} \in \mathbb{C} \Big \};$$

\item Since $X_{P}$ is simply connected, it follows that $H_{2}(X_{P},\mathbb{Z}) \cong \pi_{2}(X_{P})$ (Hurewicz's theorem);

\item By taking $\mathbb{P}_{\alpha}^{1} \hookrightarrow X_{P}$, such that 

$$\mathbb{P}_{\alpha}^{1} = \overline{\exp(\mathfrak{g}_{-\alpha})x_{0}} \subset X_{P},$$ 
$\forall \alpha \in \Sigma \backslash \Theta$, where $x_{0} = eP \in X_{P}$, it follows that 

\begin{center}

$\Big \langle c_{1}(L_{\chi_{\omega_{\alpha}}}), \big [ \mathbb{P}_{\beta}^{1}\big] \Big \rangle = \displaystyle \int_{\mathbb{P}_{\beta}^{1}} c_{1}(L_{\chi_{\omega_{\alpha}}}) = \delta_{\alpha \beta},$

\end{center}
$\forall \alpha,\beta \in \Sigma \backslash \Theta$. Hence, we obtain
\begin{center}

$\pi_{2}(X_{P}) = \displaystyle \bigoplus_{\alpha \in \Sigma \backslash \Theta} \mathbb{Z}\big [ \mathbb{P}_{\alpha}^{1}\big]$ \ \ and \ \ $H^{2}(X_{P},\mathbb{Z}) = \displaystyle \bigoplus_{\alpha \in \Sigma \backslash \Theta}  \mathbb{Z} c_{1}(L_{\chi_{\omega_{\alpha}}})$.

\end{center}
\end{itemize}
Therefore, $H^{1,1}(X_{P},\mathbb{Z}) = H^{2}(X_{P},\mathbb{Z})$. Now, from the Lefschetz theorem on (1,1)-classes \cite[p. 133]{DANIEL}, and the fact that $0 = b_{1}(X_{P}) = {\text{rk}}({\text{Pic}}^{0}(X_{P}))$, we obtain the first equality in \ref{picardeq}.
\end{proof}

\begin{remark}
\label{C8S8.2Sub8.2.3R8.2.10}
In the previous results and comments we have restricted our attention just to fundamental weights $\omega_{\alpha} \in \Lambda_{\mathbb{Z}_{\geq0}}^{\ast}$ for which $\alpha \in \Sigma \backslash \Theta$. Actually, if we have a parabolic Lie subgroup $P \subset G^{\mathbb{C}}$, such that $P = P_{\Theta}$, the decomposition 

\begin{center}

$P_{\Theta} = [P_{\Theta},P_{\Theta}]T(\Sigma \backslash \Theta)^{\mathbb{C}}$,

\end{center}
shows us that ${\text{Hom}}(P,\mathbb{C}^{\times}) = {\text{Hom}}(T(\Sigma \backslash \Theta)^{\mathbb{C}},\mathbb{C}^{\times})$. Therefore, if we take $\omega_{\alpha} \in \Lambda_{\mathbb{Z}_{\geq 0}}^{\ast}$, such that $\alpha \in \Theta$, we obtain $L_{\chi_{\omega_{\alpha}}} = X_{P} \times \mathbb{C}$, i.e., the associated holomorphic line bundle $L_{\chi_{\omega_{\alpha}}}$ is trivial.

\end{remark}

As we have seen in the previous section, it will be important for us to compute $c_{1}(X_{P})$. In order to do this, we notice that $c_{1}(X_{P}) = c_{1}(K_{X_{P}}^{-1})$, where

\begin{center}
$K_{X_{P}}^{-1} = \det \big(T^{1,0}X_{P} \big) = \bigwedge^{n}(T^{1,0}X_{P} \big)$,
\end{center}
here we suppose $\dim_{\mathbb{C}}(X_{P}) = n$.

In the context of complex flag manifolds the anticanonical line bundle can be described as follows. Consider the identification

\begin{center}

$\mathfrak{m} = \displaystyle \sum_{\alpha \in \Pi^{+} \backslash \langle \Theta \rangle^{+}} \mathfrak{g}_{-\alpha} = T_{x_{0}}^{1,0}X_{P}$,
\end{center}
where $x_{0} = eP \in X_{P}$. We have the following characterization for $T^{1,0}X_{P}$

\begin{center}

$T^{1,0}X_{P} = G^{\mathbb{C}} \times_{P} \mathfrak{m}$,

\end{center}
such that the twisted product on the right side above is obtained from the isotropy representation ${\rm{Ad}} \colon P \to {\rm{GL}}(\mathfrak{m})$ as an associated holomorphic vector bundle.

Let us introduce $\delta_{P} \in \mathfrak{h}^{\ast}$ by setting

\begin{center}

$\delta_{P} = \displaystyle \sum_{\alpha \in \Pi^{+} \backslash \langle \Theta \rangle^{+}} \alpha$.

\end{center}
Since $P = [P,P]T(\Sigma \backslash \Theta)^{\mathbb{C}}$, a straightforward computation shows that
\begin{equation}
\label{charactercanonical}
\det \circ {\rm{Ad}} = \chi_{\delta_{P}}^{-1},    
\end{equation}
from this we have the following result. 

\begin{proposition}
\label{C8S8.2Sub8.2.3Eq8.2.35}
Let $X_{P}$ be a flag manifold associated to some parabolic Lie subgroup $P = P_{\Theta}\subset G^{\mathbb{C}}$, then we have

$$K_{X_{P}}^{-1} = \det \big(T^{1,0}X_{P} \big) =\det \big ( G^{\mathbb{C}} \times_{P} \mathfrak{m} \big )= L_{\chi_{\delta_{P}}}.$$

\end{proposition}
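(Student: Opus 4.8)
The plan is to deduce the statement from the already-established identity \eqref{charactercanonical}, $\det \circ \Ad = \chi_{\delta_{P}}^{-1}$, together with the functoriality of the associated-bundle construction under the determinant functor. The first two equalities in the proposition are immediate: $K_{X_{P}}^{-1} = \det(T^{1,0}X_{P})$ is the definition of the anticanonical bundle, and $\det(T^{1,0}X_{P}) = \det(G^{\mathbb{C}} \times_{P} \mathfrak{m})$ is the given description of $T^{1,0}X_{P}$ as an associated holomorphic vector bundle via the isotropy representation $\Ad \colon P \to \GL(\mathfrak{m})$. The real content is the last equality. First I would record that taking top exterior powers commutes with forming associated bundles: for any representation $\rho \colon P \to \GL(V)$ with $\dim V = n$ one has $\det\big(G^{\mathbb{C}} \times_{\rho} V\big) = G^{\mathbb{C}} \times_{\det \circ \rho} \mathbb{C}$, since the fibrewise top exterior power turns the $P$-valued cocycle $\rho \circ \psi_{ij}$ into the $\mathbb{C}^{\times}$-valued cocycle $\det(\rho \circ \psi_{ij})$. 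Applying this with $\rho = \Ad$ and $V = \mathfrak{m}$ gives
$$
K_{X_{P}}^{-1} = \det\big(T^{1,0}X_{P}\big) = \det\big(G^{\mathbb{C}} \times_{P} \mathfrak{m}\big) = G^{\mathbb{C}} \times_{\det \circ \Ad} \mathbb{C}.
$$

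Next I would invoke \eqref{charactercanonical}. Should one prefer to verify it rather than cite it, the point is that $\det \circ \Ad$ is a holomorphic character of $P$, and since $P = [P,P]\,T(\Sigma\backslash\Theta)^{\mathbb{C}}$ every such character is trivial on the commutator subgroup and hence determined by its differential on $\mathfrak{h}$. For $H \in \mathfrak{h}$ the endomorphism $\ad(H)$ acts on each root line $\mathfrak{g}_{-\alpha} \subset \mathfrak{m}$ by the scalar $-\alpha(H)$, so
$$
d(\det \circ \Ad)_{e}(H) = \tr\big(\ad(H)|_{\mathfrak{m}}\big) = -\sum_{\alpha \in \Pi^{+}\backslash\langle\Theta\rangle^{+}}\alpha(H) = -\delta_{P}(H),
$$
which identifies $\det \circ \Ad$ with the character whose differential is $-\delta_{P}$, namely $\chi_{-\delta_{P}} = \chi_{\delta_{P}}^{-1}$.

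Finally I would match the two $P$-actions on the fibre $\mathbb{C}$. By \eqref{C8S8.2Sub8.2.1Eq8.2.4} and the convention of Remark \ref{remarkcocycle}, the bundle $L_{\chi_{\delta_{P}}} = G^{\mathbb{C}} \times_{\chi_{\delta_{P}}} \mathbb{C}_{-\delta_{P}}$ is the line bundle associated to the one-dimensional $P$-representation with action $p\cdot z = \chi_{\delta_{P}}(p)^{-1}z$. On the other hand, the determinant bundle $G^{\mathbb{C}} \times_{\det \circ \Ad} \mathbb{C}$ carries the action $p\cdot z = (\det \Ad(p))\,z = \chi_{\delta_{P}}(p)^{-1}z$ by the previous step. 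Since both line bundles are built from the same principal $P$-bundle $P \hookrightarrow G^{\mathbb{C}} \to X_{P}$ through identical $P$-actions on $\mathbb{C}$, they are canonically isomorphic, yielding $K_{X_{P}}^{-1} = L_{\chi_{\delta_{P}}}$.

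I expect the only delicate point to be the bookkeeping of conventions — in particular the dualization built into the definition of $L_{\chi_{\mu}}$ (the action through $\chi_{\mu}^{-1}$) and the sign of the weight $\delta_{P}$ — rather than any substantive difficulty; the genuine input is the trace computation behind \eqref{charactercanonical}, everything else being formal properties of associated bundles.
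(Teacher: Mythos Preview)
Your proof is correct and follows essentially the same approach as the paper: both arguments compute the transition cocycle of $\det(G^{\mathbb{C}} \times_P \mathfrak{m})$ as $\det(\Ad(\psi_{ij})) = \chi_{\delta_P}^{-1} \circ \psi_{ij}$ via \eqref{charactercanonical}, and then identify this with the defining cocycle of $L_{\chi_{\delta_P}}$ from Remark~\ref{remarkcocycle}. Your version is simply more explicit about the conventions and includes the optional trace computation underlying \eqref{charactercanonical}, which the paper states without proof.
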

\begin{proof}
The proof follows from the fact that

\begin{center}

$\det \big ( G^{\mathbb{C}} \times_{P} \mathfrak{m} \big ) = \bigg \{(U_{i})_{i \in I}, \det\big ({\rm{Ad}}(\psi_{i j})\big )\colon U_{i} \cap U_{j} \to \mathbb{C}^{\times} \bigg \}$,    
    
\end{center}
thus from \ref{charactercanonical} we have the desired result.
\end{proof}

The result above allows us to write 

$$K_{X_{P}}^{-1} = \bigg \{(U_{i})_{i \in I},\chi_{\delta_{P}}^{-1} \circ \psi_{i j} \colon U_{i} \cap U_{j} \to \mathbb{C}^{\times} \bigg \},$$
see Remark \ref{remarkcocycle}. Moreover, since the holomorphic character associated to $\delta_{P}$ can be written as
$$\chi_{\delta_{P}} = \displaystyle \prod_{\alpha \in \Sigma \backslash \Theta} \chi_{\omega_{\alpha}}^{\langle \delta_{P},h_{\alpha}^{\vee} \rangle},$$
we have the following characterization

$$K_{X_{P}}^{-1} =  L_{\chi_{\delta_{P}}} = \displaystyle \bigotimes_{\alpha \in \Sigma \backslash \Theta} L_{\chi_{\omega_{\alpha}}}^{\otimes \langle \delta_{P},h_{\alpha}^{\vee} \rangle}.$$
Therefore, we have the following description for $c_{1}(X_{P})$

\begin{equation}
\label{Cherncanonical}
c_{1}(X_{P}) = \displaystyle \sum_{\alpha \in \Sigma \backslash \Theta} \langle \delta_{P},h_{\alpha}^{\vee} \rangle \big [ \Omega_{\alpha} \big],
\end{equation}
thus from Theorem \ref{AZADBISWAS} we have a K\"{a}hler-Einstein structure $\omega_{X_{P}}$ defined by

\begin{equation}
\label{canonicalmetric}
\omega_{X_{P}} =  \displaystyle \sum_{\alpha \in \Sigma \backslash \Theta} \langle \delta_{P},h_{\alpha}^{\vee} \rangle \Omega_{\alpha},
\end{equation}
notice that ${\text{Ric}}(\omega_{X_{P}}) = 2\pi \omega_{X_{P}}$. It is worth pointing out that, also from Theorem \ref{AZADBISWAS}, we have $\omega_{X_{P}}$ determined by the quasi-potential $\varphi \colon G^{\mathbb{C}} \to \mathbb{R}$ defined by
\begin{equation}
\label{quasipotential}
\varphi(g) = \displaystyle \frac{1}{2\pi} \log \Big ( \prod_{\alpha \in \Sigma \backslash \Theta} ||gv_{\omega_{\alpha}}^{+}||^{2  \langle \delta_{P},h_{\alpha}^{\vee} \rangle}\Big), 
\end{equation}
for every $g \in G^{\mathbb{C}}$. Hence, given a local section $s_{U} \colon U \subset X_{P} \to G^{\mathbb{C}}$ we obtain the following local expression for $\omega_{X_{P}}$

\begin{equation}
\label{localform}
\omega_{X_{P}} = \displaystyle \frac{\sqrt{-1}}{2 \pi} \partial \overline{\partial}\log \Big ( \prod_{\alpha \in \Sigma \backslash \Theta} ||s_{U}v_{\omega_{\alpha}}^{+}||^{2  \langle \delta_{P},h_{\alpha}^{\vee} \rangle}\Big).    
\end{equation}

\begin{remark}
In order to do some local computations it will be convenient for us to consider the open set defined by the ``opposite" big cell in $X_{P}$. This open set is a distinguished coordinate neighbourhood $U \subset X_{P}$ of $x_{0} = eP \in X_{P}$ defined by the maximal Schubert cell. A brief description for the opposite big cell can be done as follows: Let $\Pi = \Pi^{+} \cup \Pi^{-}$ be the root system associated to the simple root system $\Sigma \subset \mathfrak{h}^{\ast}$, from this we can define the opposite big cell $U \subset X_{P}$ by

\begin{center}

 $U =  B^{-}x_{0} = R_{u}(P_{\Theta})^{-}x_{0} \subset X_{P}$,  

\end{center}
 where $B^{-} = \exp(\mathfrak{h} \oplus \mathfrak{n}^{-})$, and
 
 \begin{center}
 
 $R_{u}(P_{\Theta})^{-} = \displaystyle \prod_{\alpha \in \Pi^{-} \backslash \langle \Theta \rangle^{-}}N_{\alpha}^{-}$, \ \ (opposite unipotent radical)
 
 \end{center}
with $N_{\alpha}^{-} = \exp(\mathfrak{g}_{\alpha})$, $\forall \alpha \in \Pi^{-} \backslash \langle \Theta \rangle^{-}$. The opposite big cell defines a contractible open dense subset of $X_{P}$ and thus the restriction of any vector bundle over this open set is trivial . For further results about Schubert cells and Schubert varieties we suggest \cite{MONOMIAL}.
\end{remark}

\begin{remark} Unless otherwise stated, in the examples which we shall describe throughout this work we will use the conventions of \cite{SMA} for the realization of classical simple Lie algebras as matrix Lie algebras. \end{remark}

Let us illustrate the ideas described so far by means of basic examples.

\begin{example}
\label{exampleP1}
Considering $G^{\mathbb{C}} = {\rm{SL}}(2,\mathbb{C})$, we fix the triangular decomposition for $\mathfrak{sl}(2,\mathbb{C})$ given by

\begin{center}

$\mathfrak{sl}(2,\mathbb{C}) = \Big \langle x = \begin{pmatrix}
 0 & 1 \\
 0 & 0
\end{pmatrix} \Big \rangle_{\mathbb{C}} \oplus  \Big \langle h = \begin{pmatrix}
 1 & 0 \\
 0 & -1
\end{pmatrix} \Big \rangle_{\mathbb{C}} \oplus \Big \langle y = \begin{pmatrix}
 0 & 0 \\
 1 & 0
\end{pmatrix} \Big \rangle_{\mathbb{C}}$.

\end{center}
Notice that all the information about the decomposition above is codified in $\Sigma = \{\alpha\}$, $\Pi = \{\alpha,-\alpha\}$, and in our set of integral dominant weights, which in this case is given by

\begin{center}

$\Lambda_{\mathbb{Z}_{\geq 0}}^{\ast} = \mathbb{Z}_{\geq 0}\omega_{\alpha} $.

\end{center} 
We take $P = B$ (Borel subgroup) and from this we obtain
\begin{center}
$X_{B} = {\rm{SL}}(2,\mathbb{C})/B = \mathbb{C}{\rm{P}}^{1}$.
\end{center}
By considering the cellular decomposition

\begin{center}

$X_{B} = \mathbb{C}{\rm{P}}^{1} = N^{-}x_{0} \cup \pi \Big ( \begin{pmatrix}
 0 & 1 \\
 -1 & 0
\end{pmatrix} \Big ),$

\end{center}
we take the open set defined by the opposite big cell $U =  N^{-}x_{0} \subset X_{B}$ and the local section $s_{U} \colon U \subset \mathbb{C}{\rm{P}}^{1} \to {\rm{SL}}(2,\mathbb{C})$ defined by

\begin{center}

$s_{U}(nx_{0}) = n$, \ \ $\forall n \in N^{-}$.
\end{center}
It is worthwhile to observe that in this case we have the open set  $U =  N^{-}x_{0} \subset \mathbb{C}{\rm{P}}^{1}$ parameterized by 

\begin{center}
    
$z \in \mathbb{C} \mapsto \begin{pmatrix}
 1 & 0 \\
 z & 1
\end{pmatrix} x_{0} \subset \mathbb{C}{\rm{P}}^{1}.$
    
\end{center}
Since $V(\omega_{\alpha}) = \mathbb{C}^{2}$, $v_{\omega_{\alpha}}^{+} = e_{1}$, and $ \langle \delta_{B},h_{\alpha}^{\vee} \rangle = 2$, it follows from Equation \ref{localform} that over the opposite big cell $U = N^{-}x_{0} \subset X_{B}$ we have

\begin{center}

$\omega_{\mathbb{C}{\rm{P}}^{1}} = \displaystyle  \frac{\sqrt{-1}}{2\pi} \langle \delta_{B},h_{\alpha}^{\vee} \rangle \partial \overline{\partial} \log \Bigg ( \Big |\Big |\begin{pmatrix}
 1 & 0 \\
 z & 1
\end{pmatrix} e_{1} \Big| \Big|^{2} \Bigg ) = \frac{\sqrt{-1}}{\pi} \partial \overline{\partial} \log (1+|z|^{2}).$

\end{center}
Notice that in this case we have $K_{\mathbb{C}{\rm{P}}^{1}}^{-1} =  T^{(1,0)}\mathbb{C}{\rm{P}}^{1} = T\mathbb{C}{\rm{P}}^{1}$, and 
$K_{\mathbb{C}{\rm{P}}^{1}} = T^{\ast}\mathbb{C}{\rm{P}}^{1}$. Further, we have
\begin{center}

${\text{Pic}}(\mathbb{C}{\rm{P}}^{1}) = \mathbb{Z}c_{1}(L_{\chi_{\omega_{\alpha}}})$,    
    
\end{center}
thus $K_{\mathbb{C}{\rm{P}}^{1}}^{-1} = L_{\chi_{\omega_{\alpha}}}^{\otimes 2}$. If we denote $L_{\chi_{\omega_{\alpha}}}^{\otimes \ell} = \mathscr{O}(\ell)$, $\forall \ell \in \mathbb{Z}$, we have $K_{\mathbb{C}{\rm{P}}^{1}} = \mathscr{O}(-2)$, and ${\text{Pic}}(\mathbb{C}{\rm{P}}^{1})$ generated by $\mathscr{O}(1)$. Moreover, in this case we have the Fano index given by $I(\mathbb{C}{\rm{P}}^{1}) = 2$, which in turn implies that

\begin{center}

$K_{\mathbb{C}{\rm{P}}^{1}}^{\otimes \frac{1}{2}} = \mathscr{O}(-1).$
    
\end{center}
The computation above is an interesting exercise to understand how the approach by elements of the Lie theory, especially representation theory, can be useful to describe geometric structures.
\end{example}

\begin{example}
\label{examplePn}
Let us briefly describe the generalization of the previous example for $X_{P} = \mathbb{C}{\rm{P}}^{n}$. At first, we recall some basic data related to the Lie algebra $\mathfrak{sl}(n+1,\mathbb{C})$. By fixing the Cartan subalgebra $\mathfrak{h} \subset \mathfrak{sl}(n+1,\mathbb{C})$ given by diagonal matrices whose the trace is equal to zero, we have the set of simple roots given by
$$\Sigma = \Big \{ \alpha_{l} = \epsilon_{l} - \epsilon_{l+1} \ \Big | \ l = 1, \ldots,n\Big\},$$
here $\epsilon_{l} \colon {\text{diag}}\{a_{1},\ldots,a_{n+1} \} \mapsto a_{l}$, $ \forall l = 1, \ldots,n+1$. Therefore, the set of positive roots is given by
$$\Pi^+ = \Big \{ \alpha_{ij} = \epsilon_{i} - \epsilon_{j} \ \Big | \ i<j  \Big\}. $$
In this example we consider $\Theta = \Sigma \backslash \{\alpha_{1}\}$ and $P = P_\Theta$. Now, we take the open set defined by the opposite big cell $U =  R_{u}(P_{\Theta})^{-}x_{0} \subset \mathbb{C}{\rm{P}}^{n}$, where $x_0=eP$ (trivial coset), and  
\begin{center}
 $ R_{u}(P_{\Theta})^{-} = \displaystyle \prod_{\alpha \in \Pi^{-} \backslash \langle \Theta \rangle^{-}}N_{\alpha}^{-}$, \ with \ $N_{\alpha}^{-} = \exp(\mathfrak{g}_{\alpha})$, $\forall \alpha \in \Pi^{-} \backslash \langle \Theta \rangle^{-}$.
 \end{center}
We remark that in this case the open set $U =  R_{u}(P_{\Theta})^{-}x_{0}$ is parameterized by
\begin{center}
$(z_{1},\ldots,z_{n}) \in \mathbb{C}^{n} \mapsto \begin{pmatrix}
1 & 0 &\cdots & 0 \\
z_{1} & 1  &\cdots & 0 \\                  
\ \vdots  & \vdots &\ddots  & \vdots  \\
z_{n} & 0 & \cdots &1 
 \end{pmatrix}x_{0} \in U =  R_{u}(P_{\Theta})^{-}x_{0}$.
\end{center}
Also, notice that the coordinate system above is induced directly from the exponential map $\exp \colon {\text{Lie}}(R_{u}(P)^{-}) \to R_{u}(P)^{-}$.

From the data above, we can take a local section $s_{U} \colon U \subset \mathbb{C}{\rm{P}}^{n} \to {\rm{SL}}(n+1,\mathbb{C})$, such that 
$$s_{U}(nx_{0}) = n \in {\rm{SL}}(n+1,\mathbb{C}).$$ 
Since $V(\omega_{\alpha}) = \mathbb{C}^{n+1}$, $v_{\omega_{\alpha}}^{+} = e_{1}$, and $ \langle \delta_{P_{\Sigma \backslash \{\alpha_{1}\}}},h_{\alpha_{1}}^{\vee} \rangle = n+1$, it follows from Equation \ref{localform} that over the opposite big cell $U =  R_{u}(P_{\Theta})^{-}x_{0} \subset \mathbb{C}{\rm{P}}^{n}$ we have the expression of $\omega_{\mathbb{C}{\rm{P}}^{n}}$ given by
$$\omega_{\mathbb{C}{\rm{P}}^{n}} = \displaystyle \frac{(n+1)}{2\pi} \sqrt{-1}\partial \overline{ \partial} \log \Big (1 + \sum_{l = 1}^{n}|z_{l}|^{2} \Big ).$$
Notice that in this case we have
\begin{center}

${\text{Pic}}(\mathbb{C}{\rm{P}}^{n}) = \mathbb{Z}c_{1}(L_{\chi_{\omega_{\alpha_{1}}}})$,    
    
\end{center}
thus $K_{\mathbb{C}{\rm{P}}^{n}}^{-1} = L_{\chi_{\omega_{\alpha_{1}}}}^{\otimes (n+1)}$. If we denote by $L_{\chi_{\omega_{\alpha_{1}}}}^{\otimes \ell} = \mathscr{O}(\ell)$, $\forall \ell \in \mathbb{Z}$, we have $K_{\mathbb{C}{\rm{P}}^{n}} = \mathscr{O}(-n-1)$, and ${\text{Pic}}(\mathbb{C}{\rm{P}}^{n})$ generated by $\mathscr{O}(1)$. Moreover, in this case we have the Fano index given by $I(\mathbb{C}{\rm{P}}^{n}) = n+1$, which implies that

\begin{center}

$K_{\mathbb{C}{\rm{P}}^{n}}^{\otimes \frac{1}{n+1}} = \mathscr{O}(-1).$

\end{center}

\end{example}

\begin{example}
\label{grassmanian}
Consider $G^{\mathbb{C}} = {\rm{SL}}(4,\mathbb{C})$, here we use the same choice of Cartan subalgebra and conventions for the simple root system as in the previous example. Since our simple root system is given by
$$\Sigma = \Big \{ \alpha_{1} = \epsilon_{1}-\epsilon_{2}, \alpha_{2} = \epsilon_{2} - \epsilon_{3}, \alpha_{3} = \epsilon_{3} - \epsilon_{4}\Big \},$$
by taking $\Theta = \Sigma \backslash \{\alpha_{2}\}$ we obtain  for $P = P_{\Theta}$ the flag manifold $X_{P} = {\rm{Gr}}(2,\mathbb{C}^{4})$ (Klein quadric). Notice that in this case we have
$${\text{Pic}}({\rm{Gr}}(2,\mathbb{C}^{4})) = \mathbb{Z}c_{1}(L_{\chi_{\alpha_{2}}}),$$
thus from Proposition \ref{C8S8.2Sub8.2.3Eq8.2.35} it follows that
$$K_{{\rm{Gr}}(2,\mathbb{C}^{4})}^{-1} = L_{\chi_{\omega_{\alpha_{2}}}}^{\otimes \langle \delta_{P},h_{\alpha_{2}}^{\vee} \rangle}.$$

By considering our Lie-theoretical conventions, we have
$$\Pi^{+} \backslash \langle \Theta \rangle^{+} = \Big \{ \alpha_{2}, \alpha_{1}+\alpha_{2},\alpha_{2}+\alpha_{3},\alpha_{1} + \alpha_{2} + \alpha_{3}\Big \},$$
hence

$$\delta_{P} = \displaystyle \sum_{\alpha \in \Pi^{+} \backslash \langle \Theta \rangle^{+}} \alpha = 2\alpha_{1} + 4 \alpha_{2} + 2\alpha_{3}.$$
By means of the Cartan matrix of  $\mathfrak{sl}(4,\mathbb{C})$ we obtain 
$$ \langle \delta_{P},h_{\alpha_{2}}^{\vee} \rangle = 4 \implies K_{{\rm{Gr}}(2,\mathbb{C}^{4})}^{-1} = L_{\chi_{\omega_{\alpha_{2}}}}^{\otimes4}.$$  
In what follows we shall use the following notation: 
$$L_{\chi_{\omega_{\alpha_{2}}}}^{\otimes \ell} := \mathscr{O}_{\alpha_{2}}(\ell),$$
for every $\ell \in \mathbb{Z}$, therefore we have $K_{{\rm{Gr}}(2,\mathbb{C}^{4})} = \mathscr{O}_{\alpha_{2}}(-4)$. In order to compute the local expression of $\omega_{{\rm{Gr}}(2,\mathbb{C}^{4})} \in c_{1}(\mathscr{O}_{\alpha_{2}}(-4))$, we observe that in this case the quasi-potential $\varphi \colon  {\rm{SL}}(4,\mathbb{C}) \to \mathbb{R}$ is given by

\begin{center}

$\varphi(g) = \displaystyle \frac{\langle \delta_{P}, h_{\alpha_{2}}^{\vee}\rangle}{2\pi} \log ||g v_{\omega_{\alpha_{2}}}^{+}||^{2} = \displaystyle \frac{2}{\pi} \log ||g v_{\omega_{\alpha_{2}}}^{+}||^{2}$,

\end{center}
where $V(\omega_{\alpha_{2}}) = \bigwedge^{2}(\mathbb{C}^{4})$ and $v_{\omega_{\alpha_{2}}}^{+} =  e_{1} \wedge e_{2}$, thus we fix the basis $\{e_{i} \wedge e_{j}\}_{i<j}$ for $V(\omega_{\alpha_{2}}) = \bigwedge^{2}(\mathbb{C}^{4})$. Similarly to the previous examples, we consider the open set defined by the opposite big cell $U = B^{-}x_{0} \subset {\rm{Gr}}(2,\mathbb{C}^{4})$. In this case we have the local coordinates $nx_{0} \in U$ given by

\begin{center}

$(z_{1},z_{2},z_{3},z_{4}) \in \mathbb{C}^{4} \mapsto\begin{pmatrix}
1 & 0 & 0 & 0\\
0 & 1 & 0 & 0 \\                  
z_{1}  & z_{3} & 1 & 0 \\
z_{2}  & z_{4} & 0 & 1
 \end{pmatrix} x_{0} \in U = B^{-}x_{0}.$

\end{center}
Also, notice that the coordinates above are obtained directly from the exponential map $\exp \colon {\text{Lie}}(R_{u}(P)^{-}) \to R_{u}(P)^{-}$. From this, by taking the local section $s_{U} \colon U \subset {\rm{Gr}}(2,\mathbb{C}^{4}) \to {\rm{SL}}(4,\mathbb{C})$, such that $s_{U}(nx_{0}) = n$, we obtain

\begin{center}

$\varphi(s_{U}(nx_{0})) = \displaystyle \frac{2}{\pi} \log \Big ( 1 + \sum_{k = 1}^{4}|z_{k}|^{2} + \bigg |\det \begin{pmatrix}
 z_{1} & z_{3} \\
 z_{2} & z_{4}
\end{pmatrix} \bigg |^{2} \Big)$,

\end{center}
and the following local expression for  $\omega_{{\rm{Gr}}(2,\mathbb{C}^{4})} \in c_{1}(\mathscr{O}_{\alpha_{2}}(-4))$
\begin{equation}
\label{C8S8.3Sub8.3.2Eq8.3.21}
\omega_{{\rm{Gr}}(2,\mathbb{C}^{4})} =  \displaystyle \frac{2 \sqrt{-1}}{\pi}  \partial \overline{\partial} \log \Big (1+ \sum_{k = 1}^{4}|z_{k}|^{2} + \bigg |\det \begin{pmatrix}
 z_{1} & z_{3} \\
 z_{2} & z_{4}
\end{pmatrix} \bigg |^{2} \Big).
\end{equation}
It is worthwhile to observe that in this case we have the Fano index of ${\rm{Gr}}(2,\mathbb{C}^{4})$ given by $I({\rm{Gr}}(2,\mathbb{C}^{4})) = 4$, thus we obtain

\begin{center}
    
$K_{{\rm{Gr}}(2,\mathbb{C}^{4})}^{\otimes \frac{1}{4}} = \mathscr{O}_{\alpha_{2}}(-1).$    
    
\end{center}

\end{example}

\begin{remark}
Observe that from Proposition \ref{C8S8.2Sub8.2.3Eq8.2.35} we have for a complex flag manifold $X_{P}$ its Fano index given by 

\begin{center}   
$I(X_{P}) = {\text{gcd}} \Big (  \langle \delta_{P}, h_{\alpha}^{\vee} \rangle \ \Big | \ \alpha \in \Sigma \backslash \Theta \Big )$,    
\end{center}
here we suppose $P = P_{\Theta} \subset G^{\mathbb{C}}$, for some $\Theta \subset \Sigma$. Thus, $I(X_{P})$ can be completely determined by the Cartan matrix of $\mathfrak{g}^{\mathbb{C}}$.
\end{remark}

\subsection{Principal circle bundles over complex flag manifolds}
\label{subsec3.2}
As we have seen previously, given a complex manifold $X$ and a line bundle $L \to X$, we can define a circle bundle by taking

\begin{center}
$Q(L) = \Big \{ u \in L \ \Big | \ \sqrt{H(u,u)} = 1 \Big\}$,
\end{center}
where $H$ denotes some Hermitian structure on $L$. The action of ${\rm{U}}(1)$ on $Q(L)$ is defined by $u \cdot \theta = u \mathrm{e}^{2\pi \theta \sqrt{-1}}$, $\forall \theta \in {\rm{U}}(1)$ and $\forall u \in Q(L)$. Furthermore, a straightforward computation shows that

\begin{center}
$L = Q(L) \times_{{\rm{U}}(1)} \mathbb{C}$.
\end{center}
Conversely, given a circle bundle

\begin{center}
    
${\rm{U}}(1) \hookrightarrow Q \to X$,    
    
\end{center}
we can construct a line bundle $L(Q) \to X$ as an associated vector bundle defined by 

\begin{center}
$L(Q) = Q \times_{{\rm{U}}(1)} \mathbb{C}$,
\end{center}
where the twisted product is taken with respect to the action
\begin{center}
$\theta \cdot (u,z) = (u \cdot \theta, \mathrm{e}^{-2\pi \theta \sqrt{-1}}z)$, 
\end{center}
$\forall \theta \in {\rm{U}}(1)$ and $\forall (u,z) \in Q \times \mathbb{C}$. If we denote the set of isomorphism classes of circle bundles over $X$ by

\begin{center}
$\mathscr{P}(X,{\rm{U}}(1))$,    
\end{center}
the previous idea provides the following correspondences: 

\begin{itemize}
    
\item ${\text{Pic}}^{\infty}(X) \to \mathscr{P}(X,{\rm{U}}(1))$, $L \mapsto Q(L)$;

\item $\mathscr{P}(X,{\rm{U}}(1)) \to {\text{Pic}}^{\infty}(X)$, $Q \mapsto L(Q)$; 
    
\end{itemize}
where ${\text{Pic}}^{\infty}(X)$ denotes the smooth Picard group of $X$, i.e., the set of isomorphism classes of complex vector bundles of rank $1$. Furthermore, we have 

\begin{itemize}

\item $L(Q(L)) = L$, \ \ $[u,z] \mapsto z u$;

\item $Q(L(Q)) = Q$, \ \ $u \mapsto [u,1]$.

\end{itemize}
It will be important in this work to consider the following well-known results, the details about the proofs can be found in \cite{TOROIDAL}, \cite[Theorem 2.1]{BLAIR}.

\begin{theorem}

The set $\mathscr{P}(X,{\rm{U}}(1))$ of isomorphism classes of all principal circle bundles over $X$ forms an additive group. The identity element is given by the trivial bundle.
\end{theorem}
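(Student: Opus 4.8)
The plan is to transport the abelian group structure of the smooth Picard group ${\text{Pic}}^{\infty}(X)$ across the correspondence established just above. Recall that tensor product of complex line bundles makes ${\text{Pic}}^{\infty}(X)$ into an abelian group: the unit is the trivial line bundle $X \times \mathbb{C}$, the inverse of $L$ is its dual $L^{\ast}$, and associativity and commutativity are inherited from those of $\otimes$. Since the assignments $L \mapsto Q(L)$ and $Q \mapsto L(Q)$ are mutually inverse bijections between ${\text{Pic}}^{\infty}(X)$ and $\mathscr{P}(X,{\rm{U}}(1))$ --- indeed $L(Q(L)) = L$ and $Q(L(Q)) = Q$ --- I would define the sum of two circle bundles by $Q_{1} + Q_{2} \ce Q\big( L(Q_{1}) \otimes L(Q_{2}) \big)$ and declare $L \mapsto Q(L)$ to be a group isomorphism. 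With this definition the identity element is $Q(X \times \mathbb{C})$, which is precisely the trivial principal bundle $X \times {\rm{U}}(1)$, and the inverse of $Q$ is $Q(L(Q)^{\ast})$.

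It is instructive to unwind this operation at the level of $\check{C}$ech cocycles, which also furnishes a self-contained proof. Fix a trivializing open cover $X = \bigcup_{i} U_{i}$ and represent a circle bundle by its transition functions $\{g_{ij}\}$, with $g_{ij} \colon U_{i} \cap U_{j} \to {\rm{U}}(1)$ satisfying the cocycle condition $g_{ij}g_{jk} = g_{ik}$ on triple overlaps. Because ${\rm{U}}(1)$ is abelian, given cocycles $\{g_{ij}\}$ and $\{h_{ij}\}$ (after passing to a common refinement of the two covers) the pointwise products $\{g_{ij}h_{ij}\}$ again satisfy the cocycle condition, and this is exactly the transition data of $Q_{1} + Q_{2}$. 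In this language $\mathscr{P}(X,{\rm{U}}(1))$ is identified with $\check{H}^{1}(X,\underline{{\rm{U}}(1)})$, the first $\check{C}$ech cohomology group of the sheaf of germs of smooth ${\rm{U}}(1)$-valued functions, and the group axioms are then immediate: associativity and commutativity follow termwise from those of multiplication in ${\rm{U}}(1)$; the constant cocycle $g_{ij} \equiv 1$ represents the trivial bundle and is a two-sided identity; and $\{g_{ij}^{-1}\}$, the transition data of the conjugate (dual) bundle, provides the inverse of $\{g_{ij}\}$.

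The one point genuinely requiring care --- and hence the main obstacle --- is \emph{well-definedness} of the operation: one must check that the class of $\{g_{ij}h_{ij}\}$ in $\mathscr{P}(X,{\rm{U}}(1))$ depends only on the isomorphism classes of the two summands, not on the chosen trivializing cover nor on the cocycle representatives. This reduces to two verifications: first, that restricting to a common refinement leaves the resulting class unchanged; and second, that altering each cocycle by a coboundary $g_{ij} \mapsto \lambda_{i}\, g_{ij}\, \lambda_{j}^{-1}$ (a change of local trivializations) changes the product only by a coboundary. The latter again exploits that ${\rm{U}}(1)$ is abelian, so that the factors $\lambda_{i}$ coming from the two bundles may be freely recombined. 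Once this is settled, $L \mapsto Q(L)$ is a bijection intertwining $\otimes$ with $+$, completing the identification of $\mathscr{P}(X,{\rm{U}}(1))$ with an abelian group whose neutral element is the trivial bundle.
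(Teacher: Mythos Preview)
Your proposal is correct and aligns precisely with the paper's treatment: the paper does not prove this theorem in detail but cites \cite{TOROIDAL} and \cite[Theorem 2.1]{BLAIR}, and in the remark immediately following it adopts exactly your definition $Q_{1} + Q_{2} = Q(L(Q_{1}) \otimes L(Q_{2}))$ as the working characterization of the group law. Your unwinding via \v{C}ech cocycles and the identification $\mathscr{P}(X,{\rm{U}}(1)) \cong \check{H}^{1}(X,\underline{{\rm{U}}(1)})$ is the standard argument in the cited references, so there is no divergence in approach.
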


\begin{remark}
\label{productcircle}
From the previous comments, it will be suitable to consider the following characterization for the group structure of  $\mathscr{P}(X,{\rm{U}}(1))$

\begin{center}
    
$Q_{1} + Q_{2} = Q(L(Q_{1}) \otimes L(Q_{2})),$    
    
\end{center}
for $Q_{1},Q_{2} \in \mathscr{P}(X,{\rm{U}}(1))$. 
\end{remark}

Given $Q \in \mathscr{P}(X,{\rm{U}}(1))$, we can consider its associated homotopy exact sequence:

\begin{center}
\begin{tikzcd}

\cdots \arrow[r] & \pi_{2}(Q) \arrow[r] & \pi_{2}(X) \arrow[r,"\Delta_{Q}"] & \pi_{1}({\rm{U}}(1)) \arrow[r] & \cdots, 

\end{tikzcd}
\end{center}
since $\pi_{1}({\rm{U}}(1)) \cong \mathbb{Z} \Longrightarrow \Delta_{Q} \in {\rm{Hom}}(\pi_{2}(X),\mathbb{Z})$. From this, we have the following result. 

\begin{theorem}

Let $h \colon \pi_{2}(X) \to H_{2}(X,\mathbb{Z})$ be the natural homomorphism and $\ell$ an integer given by $\Delta_{Q}c = \ell b_{0}$, where $b_{0}$ is the generator of $\pi_{1}({\rm{U}}(1))$ and $\Delta_{Q}$ is the boundary operator of the exact homotopy sequence of a bundle $Q \in \mathscr{P}(X,{\rm{U}}(1))$. Then,

\begin{center}
    
$\Big \langle \mathrm{e}(Q),h(c) \Big \rangle = \displaystyle \int_{h(c)} \mathrm{e}(Q) = \ell,$    
    
\end{center}
where $\mathrm{e}(Q)$ denotes the Euler class of $Q \in \mathscr{P}(X,{\rm{U}}(1))$.
\end{theorem}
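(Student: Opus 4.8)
The plan is to reduce the statement to the universal case of a circle bundle over $S^{2}$, where both the connecting homomorphism and the Euler number are computed by the degree of the clutching function, and then to propagate the equality back by naturality.

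First I would represent the class $c \in \pi_{2}(X)$ by a smooth map $f \colon S^{2} \to X$, so that, by definition of the Hurewicz homomorphism, $h(c) = f_{\ast}[S^{2}]$, where $[S^{2}] \in H_{2}(S^{2},\mathbb{Z})$ is the fundamental class. Naturality of the Euler class under pullback together with the adjunction between $f_{\ast}$ on homology and $f^{\ast}$ on cohomology then gives
\[
\big \langle \mathrm{e}(Q), h(c) \big \rangle = \big \langle \mathrm{e}(Q), f_{\ast}[S^{2}] \big \rangle = \big \langle f^{\ast}\mathrm{e}(Q), [S^{2}] \big \rangle = \big \langle \mathrm{e}(f^{\ast}Q), [S^{2}] \big \rangle,
\]
so the left-hand side is precisely the Euler number of the pulled-back bundle $f^{\ast}Q \to S^{2}$. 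Next I would exploit naturality of the homotopy long exact sequence: the bundle map covering $f$ fits into a commutative ladder relating the sequences of $f^{\ast}Q \to S^{2}$ and $Q \to X$, and since the fibre $\mathrm{U}(1)$ is unchanged the induced map on $\pi_{1}(\mathrm{U}(1))$ is the identity. Hence $\Delta_{Q}(c) = \Delta_{Q}(f_{\ast}\iota) = \Delta_{f^{\ast}Q}(\iota)$, where $\iota \in \pi_{2}(S^{2})$ is the canonical generator with $h(\iota) = [S^{2}]$. Thus $\ell$ is exactly the image of $\iota$ under the connecting map of $f^{\ast}Q$, and the whole assertion collapses to the single claim that, for a principal $\mathrm{U}(1)$-bundle $P \to S^{2}$, one has $\Delta_{P}(\iota) = \langle \mathrm{e}(P), [S^{2}] \rangle\, b_{0}$.

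For this base case I would invoke the clutching description. Writing $S^{2} = D_{+} \cup D_{-}$ with $P$ trivialized over each closed hemisphere, the bundle is determined by a transition function $g \colon S^{1} \to \mathrm{U}(1)$, and its class $[g] \in \pi_{1}(\mathrm{U}(1)) \cong \mathbb{Z}$ is, on the one hand, the image $\Delta_{P}(\iota)$ of the connecting homomorphism (the characteristic element of the bundle in the sense of Steenrod's theory of fibre bundles), and, on the other hand, the first Chern/Euler number $\langle \mathrm{e}(P), [S^{2}]\rangle$, as one sees by computing the obstruction to extending a section across $D_{-}$ or by a Mayer--Vietoris computation of $H^{2}(S^{2},\mathbb{Z})$ from the two contractible hemispheres. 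Both integers equal $[g]$, hence equal each other.

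The main obstacle I anticipate is not any of these identifications individually---each is classical---but the careful bookkeeping of orientations and generators: one must fix compatible conventions for the generator $b_{0}$ of $\pi_{1}(\mathrm{U}(1))$, for the fundamental class $[S^{2}]$, and for the sign of $\mathrm{e}(Q)$, and then verify on one explicit model (for instance the Hopf bundle $S^{3} \to S^{2}$, where both invariants equal $\pm 1$) that the two generators are matched rather than opposite. Once these conventions are pinned down, the equality $\ell = \langle \mathrm{e}(P), [S^{2}]\rangle$ holds on the generator $\iota$, and since both $\Delta_{P}$ and $\langle \mathrm{e}(P), h(\,\cdot\,)\rangle$ are homomorphisms $\pi_{2}(S^{2}) \to \mathbb{Z}$ it holds on all of $\pi_{2}(S^{2})$; reversing the two reduction steps then yields the claimed identity $\langle \mathrm{e}(Q), h(c)\rangle = \ell$.
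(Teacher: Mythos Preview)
Your argument is correct and follows the classical route: reduce by naturality to a bundle over $S^{2}$, then identify both the connecting homomorphism and the Euler number with the degree of the clutching function. This is essentially the proof one finds in Steenrod's theory of fibre bundles or in the references the paper cites.

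Note, however, that the paper does not actually supply a proof of this statement. It is recorded as a background result with the attribution ``the details about the proofs can be found in \cite{TOROIDAL}, \cite[Theorem 2.1]{BLAIR}'', i.e.\ Kobayashi's paper on principal circle bundles and Blair's book. So there is no in-paper argument to compare against; your proposal simply fills in what the paper leaves to the literature, and does so along the standard lines those references follow. Your remark about fixing orientation and generator conventions is well taken and is indeed the only place where one must be careful.
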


For our purposes the following corollary will be important.

\begin{corollary}
\label{fundcorollary}
If $X$ is simply connected, then $\mathscr{P}(X,{\rm{U}}(1))$ is isomorphic to ${\rm{Hom}}(\pi_{2}(X),\mathbb{Z})$. The isomorphism is given by $Q \mapsto \Delta_{Q}$, where $\Delta_{Q}$ is the boundary operator of the exact homotopy sequence of a bundle $Q \in \mathscr{P}(X,{\rm{U}}(1))$.
\end{corollary}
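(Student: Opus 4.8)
The plan is to exhibit $Q \mapsto \Delta_{Q}$ as a composite of three group isomorphisms, built from the classification of circle bundles by their Euler class together with the hypothesis that $X$ is simply connected. Throughout I identify $\pi_{1}({\rm{U}}(1)) \cong \mathbb{Z}$ by sending the generator $b_{0}$ to $1$, so that $\Delta_{Q} \in {\rm{Hom}}(\pi_{2}(X),\mathbb{Z})$ as already observed in the excerpt.

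First I would use the standard fact that, since $B{\rm{U}}(1)$ is a $K(\mathbb{Z},2)$, the Euler class defines a group isomorphism $\mathrm{e} \colon \mathscr{P}(X,{\rm{U}}(1)) \xrightarrow{\cong} H^{2}(X,\mathbb{Z})$. Here the source carries the group law of Remark \ref{productcircle}, and $\mathrm{e}$ is additive because $\mathrm{e}(Q(L_{1} \otimes L_{2})) = \mathrm{e}(Q(L_{1})) + \mathrm{e}(Q(L_{2}))$. Next, since $X$ is simply connected, Hurewicz's theorem makes $h \colon \pi_{2}(X) \to H_{2}(X,\mathbb{Z})$ an isomorphism, and $H_{1}(X,\mathbb{Z}) = 0$ forces the ${\rm{Ext}}$-term in the universal coefficient theorem to vanish; hence evaluation gives an isomorphism $H^{2}(X,\mathbb{Z}) \xrightarrow{\cong} {\rm{Hom}}(H_{2}(X,\mathbb{Z}),\mathbb{Z})$. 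Composing with the isomorphism $h^{\ast} \colon {\rm{Hom}}(H_{2}(X,\mathbb{Z}),\mathbb{Z}) \xrightarrow{\cong} {\rm{Hom}}(\pi_{2}(X),\mathbb{Z})$ yields an isomorphism $\mathscr{P}(X,{\rm{U}}(1)) \xrightarrow{\cong} {\rm{Hom}}(\pi_{2}(X),\mathbb{Z})$.

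It then remains to check that this composite is exactly $Q \mapsto \Delta_{Q}$, which is the content of the theorem immediately preceding the corollary: it asserts $\langle \mathrm{e}(Q), h(c)\rangle = \ell$ whenever $\Delta_{Q}c = \ell b_{0}$, i.e. $\Delta_{Q}(c) = \langle \mathrm{e}(Q), h(c)\rangle$ for all $c \in \pi_{2}(X)$. Thus $\Delta_{Q}$ is precisely the functional obtained by transporting $\mathrm{e}(Q)$ through the evaluation isomorphism and pulling back along $h$, so $Q \mapsto \Delta_{Q}$ coincides with the composite above and is an isomorphism. The homomorphism property is immediate from the same formula together with the additivity of $\mathrm{e}$.

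The step I expect to require the most care is the torsion bookkeeping in the universal coefficient theorem, and this is exactly where simple connectivity enters. Without $H_{1}(X,\mathbb{Z}) = 0$ the evaluation map $H^{2}(X,\mathbb{Z}) \to {\rm{Hom}}(H_{2}(X,\mathbb{Z}),\mathbb{Z})$ would generally have a torsion kernel, and $Q \mapsto \Delta_{Q}$ would fail to be injective. Concretely, injectivity reduces to the claim that $\langle \mathrm{e}(Q), h(c)\rangle = 0$ for all $c$ forces $\mathrm{e}(Q) = 0$, whence $Q$ is trivial; surjectivity reduces to realizing an arbitrary functional on $\pi_{2}(X)$ as an Euler class. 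Both become routine once the universal coefficient and Hurewicz isomorphisms are in place.
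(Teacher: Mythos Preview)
Your proof is correct and matches the paper's approach: the paper does not spell out a proof of this corollary (it cites \cite{TOROIDAL} and \cite{BLAIR}), but the surrounding discussion invokes exactly the same ingredients you use---the pairing formula of the preceding theorem, the Hurewicz isomorphism for simply connected $X$, and the identification $\mathscr{P}(X,{\rm{U}}(1)) \cong H^{2}(X,\mathbb{Z})$ via the Euler class. The only cosmetic difference is one of logical order: you take the Euler-class classification $\mathscr{P}(X,{\rm{U}}(1)) \cong H^{2}(X,\mathbb{Z})$ as a known input (via $B{\rm{U}}(1) \simeq K(\mathbb{Z},2)$) and deduce the corollary, whereas the paper states the corollary first and then records that isomorphism afterward; both orderings are valid and rest on the same facts.
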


Now, let $X$ be a complex manifold. From Hurewicz's theorem, if $X$ is simply connected it follows that $h \colon \pi_{2}(X) \to H_{2}(X,\mathbb{Z})$ is an isomorphism, thus we obtain
\begin{equation}
\label{picinfty}
\mathscr{P}(X,{\rm{U}}(1)) \cong H^{2}(X,\mathbb{Z}) \cong {\text{Pic}}^{\infty}(X),
\end{equation}
where the first isomorphism is given by $\Delta_{Q} \mapsto \mathrm{e}(Q)$, $\forall Q \in \mathscr{P}(X,{\rm{U}}(1)) $ and the second isomorphism follows from the exponential exact sequence of sheaves

\begin{center}
\begin{tikzcd}

0 \arrow[r] & \underline{\mathbb{Z}} \arrow[r,"2\pi \sqrt{-1}"] & \underline{\mathbb{C}} \arrow[r,"\exp"] & \underline{\mathbb{C}}^{\times} \arrow[r] & 0, 

\end{tikzcd}
\end{center}
notice that $ {\text{Pic}}^{\infty}(X) \cong H^{1}(X,\underline{\mathbb{C}}^{\times})$, see for instance \cite[Chapter 2]{LOOP}.

The isomorphism \ref{picinfty} allows us to see that

\begin{center}
    
$\mathrm{e}(Q) = c_{1}(L(Q))$, \ \ and \ \ $c_{1}(L) = \mathrm{e}(Q(L))$,    
    
\end{center}
$\forall Q \in \mathscr{P}(X,{\rm{U}}(1)) $, $\forall L \in {\text{Pic}}^{\infty}(X)$. 

\begin{remark}
It is worth pointing out that, in the setting above, if $X$ is not simply connected we can also obtain the isomorphism \ref{picinfty}. Actually, if we consider the natural exact sequence of sheaves
\begin{center}
\begin{tikzcd}

0 \arrow[r] & \underline{\mathbb{Z}} \arrow[r,"2\pi \sqrt{-1}"] & \underline{\sqrt{-1}\mathbb{R}} \arrow[r,"\exp"] & \underline{S}^{1} \arrow[r] & 0, 

\end{tikzcd}
\end{center}
the result follows from the associated cohomology sequence 
\begin{center}
\begin{tikzcd}

\cdots \arrow[r] & H^{1}(X, \underline{\sqrt{-1}\mathbb{R}}) \arrow[r] & H^{1}(X,\underline{S}^{1}) \arrow[r] & H^{2}(X,\underline{\mathbb{Z}}) \arrow[r] & H^{2}(X, \underline{\sqrt{-1}\mathbb{R}}) \arrow[r] &\cdots 

\end{tikzcd}
\end{center}
notice that $\mathscr{P}(X,{\rm{U}}(1)) \cong H^{1}(X,\underline{S}^{1})$, see for instance \cite[Chapter 2, page 18]{BLAIR}. We also observe that in the particular case of a complex flag manifold $X_{P}$ we have a concrete description for $\pi_{2}(X_{P})$, so the result of Corollary \ref{fundcorollary} combined with the ideas of the proof of Proposition \ref{C8S8.2Sub8.2.3P8.2.6} provide a suitable approach which takes into account some interesting elements of Lie theory. 
\end{remark}
Now, from Proposition \ref{C8S8.2Sub8.2.3P8.2.6} and the last comments above, we have the following result. 

\begin{theorem}[ Kobayashi, \cite{TOROIDAL}]

Let $X_{P}$ be a complex flag manifold defined by a parabolic Lie subgroup $P = P_{\Theta} \subset G^{\mathbb{C}}$. Then, we have

\begin{center}
    
$\mathscr{P}(X_{P},{\rm{U}}(1)) = \displaystyle \bigoplus_{\alpha \in \Sigma \backslash \Theta}  \mathbb{Z}\mathrm{e}(Q(L_{\chi_{\omega_{\alpha}}})).$
    
\end{center}

\end{theorem}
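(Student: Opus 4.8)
The plan is to reduce the statement to the two computations already carried out for $X_P$, namely the identification of $H^{2}(X_P,\mathbb{Z})$ as a free abelian group on the classes $[\Omega_\alpha]$ (Proposition \ref{C8S8.2Sub8.2.3P8.2.6}) and the identity $c_{1}(L_{\chi_{\omega_\alpha}}) = [\Omega_\alpha]$ (Proposition \ref{C8S8.2Sub8.2.3P8.2.7}). The bridge from principal circle bundles to line bundles is the isomorphism \ref{picinfty}, which is available because every complex flag manifold $X_P$ is simply connected (so Corollary \ref{fundcorollary} and Hurewicz apply). Thus the whole theorem is essentially a transport of the Picard-group computation through the circle-bundle/line-bundle dictionary set up earlier in this subsection.

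First I would invoke \ref{picinfty} to obtain the isomorphism $\mathscr{P}(X_P,{\rm{U}}(1)) \cong H^{2}(X_P,\mathbb{Z})$ given by $Q \mapsto \mathrm{e}(Q)$. To see that this is a homomorphism for the additive structure on $\mathscr{P}(X_P,{\rm{U}}(1))$ described in Remark \ref{productcircle}, I would combine the mutually inverse correspondences $L \mapsto Q(L)$ and $Q \mapsto L(Q)$ with the multiplicativity of the first Chern class: for $Q_{1},Q_{2} \in \mathscr{P}(X_P,{\rm{U}}(1))$ one has $\mathrm{e}(Q_{1}+Q_{2}) = c_{1}(L(Q_{1}) \otimes L(Q_{2})) = c_{1}(L(Q_{1})) + c_{1}(L(Q_{2})) = \mathrm{e}(Q_{1}) + \mathrm{e}(Q_{2})$, using the relation $\mathrm{e}(Q) = c_{1}(L(Q))$.

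Next I would transport the distinguished basis across this isomorphism. By Proposition \ref{C8S8.2Sub8.2.3P8.2.6} we have $H^{2}(X_P,\mathbb{Z}) = \bigoplus_{\alpha \in \Sigma \backslash \Theta}\mathbb{Z}[\Omega_\alpha]$, and by Proposition \ref{C8S8.2Sub8.2.3P8.2.7} each generator satisfies $[\Omega_\alpha] = c_{1}(L_{\chi_{\omega_\alpha}}) = \mathrm{e}(Q(L_{\chi_{\omega_\alpha}}))$. Pulling these generators back through $Q \mapsto \mathrm{e}(Q)$ produces precisely the classes of the bundles $Q(L_{\chi_{\omega_\alpha}})$, whence $\mathscr{P}(X_P,{\rm{U}}(1)) = \bigoplus_{\alpha \in \Sigma \backslash \Theta} \mathbb{Z}\,\mathrm{e}(Q(L_{\chi_{\omega_\alpha}}))$, which is the desired conclusion.

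I do not expect a serious obstacle: the theorem is a repackaging of the Picard-group computation for $X_P$ through the smooth circle-bundle correspondence. The only point genuinely requiring care is checking that $Q \mapsto \mathrm{e}(Q)$ respects the group operations on both sides, i.e. that the additive structure on $\mathscr{P}(X_P,{\rm{U}}(1))$ from Remark \ref{productcircle} corresponds to addition of Euler classes in $H^{2}(X_P,\mathbb{Z})$; this is exactly what the relation $\mathrm{e}(Q) = c_{1}(L(Q))$ together with $c_{1}(L_{1} \otimes L_{2}) = c_{1}(L_{1}) + c_{1}(L_{2})$ guarantees, so the argument closes without further input.
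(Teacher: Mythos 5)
Your proposal is correct and follows essentially the same route as the paper, which deduces the theorem directly from Proposition \ref{C8S8.2Sub8.2.3P8.2.6} together with the isomorphism \ref{picinfty} and the relations $\mathrm{e}(Q) = c_{1}(L(Q))$, $c_{1}(L) = \mathrm{e}(Q(L))$. Your explicit check that $Q \mapsto \mathrm{e}(Q)$ is additive for the group structure of Remark \ref{productcircle} is a detail the paper leaves implicit, but it is exactly the right verification.
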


\begin{remark}
It is worthwhile to point out that this last result which we presented above is stated slightly different in \cite{TOROIDAL}. We proceed in this way because our approach is concerned to describe connections and curvature of line bundles and principal circle bundles, therefore we use characteristic classes to describe $\mathscr{P}(X_{P},{\rm{U}}(1))$.
\end{remark}

\begin{remark}
Notice that, particularly, we have
\begin{center}
$\mathscr{P}(X_{P},{\rm{U}}(1)) \cong {\text{Pic}}(X_{P})$.
\end{center}
\end{remark}
In what follows, given a complex flag manifold $X_{P}$, where $P = P_{\Theta}$, we denote
\begin{equation}
\label{notationsimple}    
Q(\mu) := Q(L_{\chi_{\mu}}),
\end{equation}
for every $\mu \in \Lambda_{\mathbb{Z}_{\geq 0}}^{\ast}$. In some cases we also shall denote by $\pi_{Q(\mu)}  \colon Q(\mu) \to X_{P}$ the associated projection map.

Our next task will be to compute $\mathrm{e}(Q(\omega_{\alpha})) \in H^{2}(X_{P},\mathbb{Z})$, $\forall \alpha \in \Sigma \backslash \Theta$. In order to do so, it will be important to consider Proposition \ref{C8S8.2Sub8.2.3P8.2.7} and the fact that $\mathrm{e}(Q(\omega_{\alpha})) = c_{1}(L_{\chi_{\omega_{\alpha}}})$, $\forall \alpha \in \Sigma \backslash \Theta$.

Consider an open cover $X_{P} = \bigcup_{i \in I} U_{i}$ which trivializes both $P \hookrightarrow G^{\mathbb{C}} \to X_{P}$ and $L_{\chi_{\omega_{\alpha}}} \to X_{P}$, such that $\alpha \in \Sigma \backslash \Theta$, and take a collection of local sections $(s_{i})_{i \in I}$, such that $s_{i} \colon U_{i} \subset X_{P} \to G^{\mathbb{C}}$. As we have seen, associated to this data we can define $q_{i} \colon U_{i} \to \mathbb{R}_{+}$ by setting
\begin{center}

$q_{i} =  {\mathrm{e}}^{-2\pi \varphi_{\omega_{\alpha}} \circ s_{i}} = \displaystyle \frac{1}{||s_{i}v_{\omega_{\alpha}}^{+}||^{2}},$

\end{center}
$\forall i \in I$. From these functions we obtain a Hermitian structure $H$ on $L_{\chi_{\omega_{\alpha}}}$ by taking on each trivialization $f_{i} \colon L_{\chi_{\omega_{\alpha}}} \to U_{i} \times \mathbb{C}$ a Hermitian metric defined by
\begin{center}
$H((x,v),(x,w)) = q_{i}(x) v\overline{w},$
\end{center}
for $(x,v),(x,w) \in L_{\chi_{\omega_{\alpha}}}|_{U_{i}} \cong U_{i} \times \mathbb{C}$. Hence, for the pair $(L_{\chi_{\omega_{\alpha}}},H)$ we have the associated principal circle bundle

\begin{center}
    
$Q(\omega_{\alpha}) = \Big \{ u \in L_{\chi_{\omega_{\alpha}}} \ \Big | \ \sqrt{H(u,u)} = 1 \Big\}.$    
    
\end{center}
In terms of cocycles the principal circle bundle $Q(\omega_{\alpha})$ is determined by 

\begin{center}
    
$t_{ij} \colon U_{i} \cap U_{j} \to {\rm{U}}(1)$, \ \ $t_{ij} = \frac{g_{ij}}{||g_{ij}||},$
    
\end{center}
where $g_{ij} = \chi_{\omega_{\alpha}}^{-1} \circ \psi_{ij}$, $\forall i,j \in I$, see the proof of Proposition \ref{C8S8.2Sub8.2.3P8.2.7}. Therefore, if we take a local chart $h_{i} \colon \pi_{Q(\omega_{\alpha})}^{-1}(U_{i}) \subset Q(\omega_{\alpha}) \to U_{i} \times {\rm{U}}(1)$, on the transition $U_{i} \cap U_{j} \neq \emptyset$ we obtain
\begin{equation}
\label{transition}
(h_{i} \circ h_{j}^{-1})(x,a_{j}) = (x,a_{j}t_{ij}(x)) = (x,a_{i}),    
\end{equation}
thus we have $a_{i} = a_{j}t_{ij}$, on $U_{i} \cap U_{j} \neq \emptyset$. If we set 
\begin{equation}
\mathcal{A}_{i} = - \displaystyle \frac{1}{2}\big ( \partial - \overline{\partial} \big ) \log||s_{i} v_{\omega_{\alpha}}^{+}||^{2},    
\end{equation}
$\forall i \in I$, we obtain the following result.

\begin{proposition}
\label{locconnection}
The collection of local $\mathfrak{u}(1)$-valued 1-forms defined by 
\begin{equation}
\label{connection}    
\eta_{i}' = \pi_{Q(\omega_{\alpha})}^{\ast}\mathcal{A}_{i} + \frac{da_{i}}{a_{i}},
\end{equation}
where 
\begin{center}
$\mathcal{A}_{i} = - \displaystyle \frac{1}{2}\big ( \partial - \overline{\partial} \big ) \log||s_{i} v_{\omega_{\alpha}}^{+}||^{2},$
\end{center}
$\forall i \in I$, provides a connection $\eta'_{\alpha}$ on $Q(\omega_{\alpha})$ which satisfies $\eta'_{\alpha} = \eta_{i}'$ on $Q(\omega_{\alpha})|_{U_{i}}$, and
\begin{equation}
\label{eqconnections}
\displaystyle \frac{\sqrt{-1}}{2\pi}d\eta'_{\alpha} = \pi_{Q(\omega_{\alpha})}^{\ast} \Omega_{\alpha}.
\end{equation}
\end{proposition}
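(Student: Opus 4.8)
The plan is to prove two things: that the locally defined forms $\eta_i'$ agree on overlaps, so that they patch to a global $\mathfrak{u}(1)$-valued $1$-form $\eta'_{\alpha}$ which is a genuine principal connection, and that the curvature identity \ref{eqconnections} holds. Granting the patching, that $\eta'_{\alpha}$ is a connection form is immediate from the shape $\eta_i' = \pi_{Q(\omega_{\alpha})}^{\ast}\mathcal{A}_i + \frac{da_i}{a_i}$: the term $\pi_{Q(\omega_{\alpha})}^{\ast}\mathcal{A}_i$ is pulled back from the base, hence annihilates vertical vectors and is $\mathrm{U}(1)$-invariant, while $\frac{da_i}{a_i}$ is the Maurer--Cartan form of the fibre, which restricts to the canonical generator of $\mathfrak{u}(1)$ on the fundamental vector field of the action $u\cdot\theta = u\mathrm{e}^{2\pi\theta\sqrt{-1}}$ and is invariant under right translation. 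So the real content is the cocycle bookkeeping and the curvature computation.

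For the patching, which I expect to be the main obstacle, I would start from the relation $q_j = |g_{ij}|^2 q_i$ on $U_i\cap U_j$ obtained in the proof of Proposition \ref{C8S8.2Sub8.2.3P8.2.7} (Equation \ref{collectionofequ}); taking $-\tfrac12\log$ gives $\log\|s_i v_{\omega_{\alpha}}^{+}\|^2 - \log\|s_j v_{\omega_{\alpha}}^{+}\|^2 = \log|g_{ij}|^2$. Since $g_{ij} = \chi_{\omega_{\alpha}}^{-1}\circ\psi_{ij}$ is holomorphic, I split $\log|g_{ij}|^2 = \log g_{ij} + \overline{\log g_{ij}}$ and apply $\partial-\overline{\partial}$; the holomorphic summand is killed by $\overline{\partial}$ and the antiholomorphic one by $\partial$, yielding $\mathcal{A}_i - \mathcal{A}_j = -\tfrac12\big(\tfrac{dg_{ij}}{g_{ij}} - \tfrac{d\overline{g_{ij}}}{\overline{g_{ij}}}\big)$. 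On the other hand, writing $t_{ij} = g_{ij}|g_{ij}|^{-1} = g_{ij}^{1/2}\overline{g_{ij}}^{-1/2}$ gives $\tfrac{dt_{ij}}{t_{ij}} = \tfrac12\big(\tfrac{dg_{ij}}{g_{ij}} - \tfrac{d\overline{g_{ij}}}{\overline{g_{ij}}}\big)$, while the transition law $a_i = a_j t_{ij}$ of Equation \ref{transition} gives $\tfrac{da_i}{a_i} - \tfrac{da_j}{a_j} = \pi_{Q(\omega_{\alpha})}^{\ast}\tfrac{dt_{ij}}{t_{ij}}$. Adding the two contributions, the $\tfrac{dt_{ij}}{t_{ij}}$ terms cancel and $\eta_i' - \eta_j' = 0$, so the local forms glue to a global $\eta'_{\alpha}$ with $\eta'_{\alpha} = \eta_i'$ on $Q(\omega_{\alpha})|_{U_i}$.

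For the curvature, since $\tfrac{da_i}{a_i} = d\log a_i$ is closed, differentiating $\eta'_{\alpha} = \eta_i'$ gives $d\eta'_{\alpha} = \pi_{Q(\omega_{\alpha})}^{\ast}\,d\mathcal{A}_i$. Using $d = \partial+\overline{\partial}$ together with $\partial^2 = \overline{\partial}^2 = 0$ and $\partial\overline{\partial} = -\overline{\partial}\partial$, one has $(\partial+\overline{\partial})(\partial-\overline{\partial}) = -2\partial\overline{\partial}$, hence $d\mathcal{A}_i = \partial\overline{\partial}\log\|s_i v_{\omega_{\alpha}}^{+}\|^2$. Comparing with the defining relation \ref{basicforms}, where $\Omega_{\alpha}$ is characterized by $\pi^{\ast}\Omega_{\alpha} = \sqrt{-1}\,\partial\overline{\partial}\varphi_{\omega_{\alpha}}$ with $\varphi_{\omega_{\alpha}}(g) = \tfrac{1}{2\pi}\log\|g v_{\omega_{\alpha}}^{+}\|^2$, the local section $s_i$ identifies $\Omega_{\alpha}|_{U_i} = \tfrac{\sqrt{-1}}{2\pi}\partial\overline{\partial}\log\|s_i v_{\omega_{\alpha}}^{+}\|^2$, so that $\tfrac{\sqrt{-1}}{2\pi}\,d\mathcal{A}_i = \Omega_{\alpha}|_{U_i}$, the factors $\tfrac{\sqrt{-1}}{2\pi}$ and $\tfrac{2\pi}{\sqrt{-1}}$ balancing exactly. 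Pulling back by $\pi_{Q(\omega_{\alpha})}$ yields $\tfrac{\sqrt{-1}}{2\pi}d\eta'_{\alpha} = \pi_{Q(\omega_{\alpha})}^{\ast}\Omega_{\alpha}$, which is precisely Equation \ref{eqconnections}.
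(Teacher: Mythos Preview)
Your proof is correct and follows essentially the same approach as the paper: both verify the patching by deriving the identity $\mathcal{A}_i - \mathcal{A}_j = -t_{ij}^{-1}dt_{ij}$ and combining it with the fibre-coordinate transition $\tfrac{da_i}{a_i} - \tfrac{da_j}{a_j} = t_{ij}^{-1}dt_{ij}$, and both obtain the curvature identity from $d\mathcal{A}_i = \partial\overline{\partial}\log\|s_i v_{\omega_\alpha}^{+}\|^2$ together with the definition of $\Omega_\alpha$. The only cosmetic difference is that the paper phrases the first step through the Chern connection $A_i = -\partial\log\|s_i v_{\omega_\alpha}^{+}\|^2$ and its gauge transformation $A_i = A_j - g_{ij}^{-1}dg_{ij}$, taking imaginary parts, whereas you compute $(\partial-\overline{\partial})\log|g_{ij}|^2$ directly; these are the same computation unpacked slightly differently.
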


\begin{proof}
The proof follows from the following facts:

\begin{enumerate}

\item $\mathcal{A}_{i} = \mathcal{A}_{j} - t_{ij}^{-1}dt_{ij}$, on $U_{i} \cap U_{j}$;

\item $a_{i}^{-1}da_{i} = (a_{j}t_{ij})^{-1}d(a_{j}t_{ij}) = a_{j}^{-1}da_{j} + t_{ij}^{-1}dt_{ij}$, on $U_{i} \cap U_{j}.$

\end{enumerate}
The first fact is a consequence of the definition of $\mathcal{A}_{i}$. Actually, we have that 
\begin{center}
    
$\mathcal{A}_{i} = \displaystyle \frac{1}{2} \big (A_{i} - \overline{A}_{i}\big ) = \sqrt{-1} {\rm{Im}}(A_{i}),$
    
\end{center}
where $A_{i} \in \Omega^{1,0}(U_{i})$ is defined by
\begin{center}
    
$A_{i} = \partial \log H = - \partial \log ||s_{i}v_{\omega_{\alpha}}^{+}||^{2}.$  
    
\end{center}
These last comments just say that the set of gauge potentials $(\mathcal{A}_{i})_{i \in I}$ are induced by the Chern connection on $L_{\chi_{\omega_{\alpha}}}$ defined by $(A_{i})_{i \in I}$. Hence, we have on $U_{i} \cap U_{j}$

\begin{center}
    
$A_{i} = A_{j} - g_{ij}^{-1}dg_{ij} \Longrightarrow \mathcal{A}_{i} = \mathcal{A}_{j} - t_{ij}^{-1}dt_{ij},$
    
\end{center}
here we recall that $t_{ij} = \frac{g_{ij}}{||g_{ij}||}$. The second fact above follows from \ref{transition}.

Therefore, from $(1)$ and $(2)$, we have $\eta'_{\alpha} \in \Omega^{1}(Q(\omega_{\alpha});\mathfrak{u}(1))$ such that

\begin{center}

$\eta'_{\alpha} |_{ \pi_{Q(\omega_{\alpha})}^{-1}(U_{i})} = \pi_{Q(\omega_{\alpha})}^{\ast}\mathcal{A}_{i} + a_{i}^{-1}da_{i},$
    
\end{center}
notice that $\eta'_{\alpha}$ defines a connection one-form by definition. Now, we observe that

\begin{center}
    
$d\eta'_{\alpha} =  \pi_{Q(\omega_{\alpha})}^{\ast}d\mathcal{A}_{i} = \pi_{Q(\omega_{\alpha})}^{\ast}dA_{i},$
    
\end{center}
since $dA_{i} = F_{\nabla}$, for $\nabla = d + A_{i}$ (locally), it follows that

\begin{center}

$\displaystyle \frac{\sqrt{-1}}{2\pi}d\eta'_{\alpha} = \frac{\sqrt{-1}}{2\pi} \pi_{Q(\omega_{\alpha})}^{\ast} F_{\nabla},$

\end{center}
thus from Proposition \ref{C8S8.2Sub8.2.3P8.2.7} we obtain Equation \ref{eqconnections}.
\end{proof}

\begin{remark}
In what follows we shall denote by $\mathcal{A} = (\mathcal{A}_{i})_{i \in I}$ the collection of (gauge) potentials obtained by the result above. We also will denote by $d\mathcal{A} \in \Omega^{1,1}(X_{P})$ the globally defined $(1,1)$-form associated to $\mathcal{A}$.
\end{remark}

The description provided by Proposition \ref{locconnection} will be fundamental for our next step to describe contact structures on homogeneous contact manifolds.

\subsection{Examples}
\label{subsec3.3}
Let us illustrate the previous results, especially Proposition \ref{locconnection}, by means of basic examples.

\begin{example}[Hopf bundle]
\label{HOPFBUNDLE}
Consider $G^{\mathbb{C}} = {\rm{SL}}(2,\mathbb{C})$ and $P = B \subset {\rm{SL}}(2,\mathbb{C})$ as in Example \ref{exampleP1}. As we have seen, in this case we have

\begin{center}
    
$X_{B} = \mathbb{C}{\rm{P}}^{1}$ \ \ and \ \ $\mathscr{P}(\mathbb{C}{\rm{P}}^{1},{\rm{U}}(1)) = \mathbb{Z}\mathrm{e}(Q(\omega_{\alpha}))$,
    
\end{center}
where $Q(\omega_{\alpha}) = Q(\mathscr{O}(1))$. Since $K_{\mathbb{C}{\rm{P}}^{1}}^{\otimes \frac{1}{2}} = \mathscr{O}(-1)$, it follows that 

\begin{center}
    
$Q(-\omega_{\alpha}) = S^{3}$.    
    
\end{center}
By considering the opposite big cell $U =  N^{-}x_{0} \subset X_{B}$ and the local section $s_{U} \colon U \subset \mathbb{C}{\rm{P}}^{1} \to {\rm{SL}}(2,\mathbb{C})$ defined by

\begin{center}

$s_{U}(nx_{0}) = n$, \ \ $\forall n \in N^{-}$,
\end{center}
we obtain from Proposition \ref{locconnection} the following local expression for the gauge potential

\begin{center}

$\mathcal{A}_{U} = \displaystyle \frac{1}{2}\big ( \partial - \overline{\partial} \big ) \log||s_{U} v_{\omega_{\alpha}^{+}}||^{2},$    
    
\end{center}
on the opposite big cell $U \subset \mathbb{C}{\rm{P}}^{1}$, thus we have

\begin{center}
    
$\mathcal{A}_{U} =  \displaystyle  \frac{1}{2} \big ( \partial - \overline{\partial} \big ) \log \Bigg ( \Big |\Big |\begin{pmatrix}
 1 & 0 \\
 z & 1
\end{pmatrix} e_{1} \Big| \Big|^{2} \Bigg ) = \displaystyle  - \frac{1}{2}\frac{zd\overline{z} - \overline{z}dz}{(1+|z|^{2})}.$    
    
\end{center}
Hence, we have a principal ${\rm{U}}(1)$-connection on $Q(-\omega_{\alpha}) = S^{3}$ (locally) defined by

\begin{center}

$\eta'_{\alpha} = \displaystyle  - \frac{1}{2}\frac{zd\overline{z} - \overline{z}dz}{(1+|z|^{2})} + \frac{da_{U}}{a_{U}}.$    
    
\end{center}
Therefore, we obtain 

\begin{center}
$\mathrm{e}(S^{3}) = \displaystyle \Big [\frac{\sqrt{-1}}{2\pi}d\mathcal{A} \Big ] \in H^{2}( \mathbb{C}{\rm{P}}^{1},\mathbb{Z}).$
\end{center}
It is worth pointing out that from the ideas above, given $Q \in \mathscr{P}(\mathbb{C}{\rm{P}}^{1},{\rm{U}}(1))$, it follows that $Q = Q(-\ell \omega_{\alpha})$, for some $\ell \in \mathbb{Z}$, thus we have

\begin{center}
    
$Q = S^{3}/\mathbb{Z}_{\ell}$ \ \  and \ \ $\mathrm{e}(Q) = \displaystyle \Big [\frac{\ell \sqrt{-1}}{2\pi}d\mathcal{A} \Big ] \in H^{2}( \mathbb{C}{\rm{P}}^{1},\mathbb{Z}).$
    
\end{center}
Hence, we obtain the Euler class of the principal circle bundle defined by $ Q(-\ell \omega_{\alpha}) = S^{3}/\mathbb{Z}_{\ell}$ (Lens space).
\end{example}

\begin{example}[Complex Hopf fibrations]
\label{COMPLEXHOPF}
The previous example can be easily generalized. In fact, consider the basic data as in Example \ref{examplePn}, namely, the complex simple Lie group $G^{\mathbb{C}} = {\rm{SL}}(n+1,\mathbb{C})$ and the parabolic Lie subgroup $P = P_{\Sigma \backslash \{\alpha_{1}\}}$. As we have seen, in this case we have

\begin{center}
    
$X_{P_{\Sigma \backslash \{\alpha_{1}\}}} = \mathbb{C}{\rm{P}}^{n}$ \ \  and  \ \ $\mathscr{P}(\mathbb{C}{\rm{P}}^{n},{\rm{U}}(1)) = \mathbb{Z}\mathrm{e}(Q(\omega_{\alpha_{1}}))$,
    
\end{center}
where $Q(\omega_{\alpha_{1}}) = Q(\mathscr{O}(1))$. Since $K_{\mathbb{C}{\rm{P}}^{n}}^{\otimes \frac{1}{n+1}} = \mathscr{O}(-1)$, it follows that 

\begin{center}
    
$Q(-\omega_{\alpha_{1}}) = S^{2n+1}$.    
    
\end{center}
From Proposition \ref{locconnection} and a similar computation as in the previous example, we have

\begin{center}
    
$\mathcal{A}_{U} = \displaystyle \frac{1}{2} \big ( \partial - \overline{\partial} \big )\log \Big (1 + \sum_{l = 1}^{n}|z_{l}|^{2} \Big ),$
    
\end{center}
on the opposite big cell $U \subset \mathbb{C}{\rm{P}}^{n}$. Therefore, we obtain a principal ${\rm{U}}(1)$-connection on $Q(-\omega_{\alpha_{1}}) = S^{2n+1}$ (locally) defined by

\begin{center}

$\eta'_{\alpha_{1}} = \displaystyle  - \frac{1}{2} \sum_{l = 1}^{n}\frac{z_{l}d\overline{z}_{l} - \overline{z}_{l}dz_{l}}{\big (1 + \sum_{l = 1}^{n}|z_{l}|^{2} \big )} + \frac{da_{U}}{a_{U}},$    
    
\end{center}
thus we have 

\begin{center}
$\mathrm{e}(S^{2n+1}) = \displaystyle \Big [\frac{\sqrt{-1}}{2\pi}d\mathcal{A} \Big ] \in H^{2}( \mathbb{C}{\rm{P}}^{n},\mathbb{Z}).$
\end{center}
It is worth pointing out that given $Q \in \mathscr{P}(\mathbb{C}{\rm{P}}^{n},{\rm{U}}(1))$, it follows that $Q = Q(-\ell \omega_{\alpha_{1}})$, for some $\ell \in \mathbb{Z}$, thus we have

\begin{center}
    
$Q = S^{2n+1}/\mathbb{Z}_{\ell}$ \ \  and \ \ $\mathrm{e}(Q) = \displaystyle \Big [\frac{\ell \sqrt{-1}}{2\pi}d\mathcal{A} \Big ] \in H^{2}( \mathbb{C}{\rm{P}}^{n},\mathbb{Z}).$
    
\end{center}
Hence, we obtain the Euler class of the principal circle bundle defined by the Lens space $ Q(-\ell \omega_{\alpha_{1}}) = S^{2n+1}/\mathbb{Z}_{\ell}$.
\end{example}

\begin{example}[Stiefel manifold]
\label{STIEFEL}
Now, consider $G^{\mathbb{C}} = {\rm{SL}}(4,\mathbb{C})$ and $P = P_{\Sigma \backslash \{\alpha_{2}\}}$ as in Example \ref{grassmanian}. In this case we have

\begin{center}
$X_{P_{\Sigma \backslash \{\alpha_{2}\}}} = {\rm{Gr}}(2,\mathbb{C}^{4})$ \ \ and \ \ $\mathscr{P}({\rm{Gr}}(2,\mathbb{C}^{4}),{\rm{U}}(1)) = \mathbb{Z}\mathrm{e}(Q(\omega_{\alpha_{2}})),$
\end{center}
where $Q(\omega_{\alpha_{2}}) = Q(\mathscr{O}_{\alpha_{2}}(1))$. Since $K_{{\rm{Gr}}(2,\mathbb{C}^{4})}^{\otimes \frac{1}{4}} = \mathscr{O}_{\alpha_{2}}(-1)$, it follows that 

\begin{center}
    
$Q(-\omega_{\alpha_{2}}) = \mathscr{V}_{2}(\mathbb{R}^{6})$ \ \ (Stiefel manifold).    
    
\end{center}
From Proposition \ref{locconnection}, and the computations in Example \ref{grassmanian}, we obtain

\begin{center}
    
$\mathcal{A}_{U} = \displaystyle \frac{1}{2}\big ( \partial - \overline{\partial} \big ) \log \Big ( 1 + \sum_{k = 1}^{4}|z_{k}|^{2} + \bigg |\det \begin{pmatrix}
 z_{1} & z_{3} \\
 z_{2} & z_{4}
\end{pmatrix} \bigg |^{2} \Big),$
    
\end{center}
on the opposite big cell $U \subset {\rm{Gr}}(2,\mathbb{C}^{4})$. Hence, we have a principal ${\rm{U}}(1)$-connection on $Q(-\omega_{\alpha_{2}}) = \mathscr{V}_{2}(\mathbb{R}^{6})$ (locally) defined by

\begin{center}

$\eta'_{\alpha_{2}} = \displaystyle \frac{1}{2}\big ( \partial - \overline{\partial} \big ) \log \Big ( 1 + \sum_{k = 1}^{4}|z_{k}|^{2} + \bigg |\det \begin{pmatrix}
 z_{1} & z_{3} \\
 z_{2} & z_{4}
\end{pmatrix} \bigg |^{2} \Big) + \frac{da_{U}}{a_{U}}.$    
    
\end{center}
Therefore, we have 

\begin{center}
$\mathrm{e}(\mathscr{V}_{2}(\mathbb{R}^{6})) = \displaystyle \Big [\frac{\sqrt{-1}}{2\pi}d\mathcal{A} \Big ] \in H^{2}(  {\rm{Gr}}(2,\mathbb{C}^{4}),\mathbb{Z}).$
\end{center}
Notice that, given $Q \in \mathscr{P}({\rm{Gr}}(2,\mathbb{C}^{4}),{\rm{U}}(1))$, it follows that $Q = Q(-\ell \omega_{\alpha_{2}})$, for some $\ell \in \mathbb{Z}$, thus we have

\begin{center}
    
$Q =\mathscr{V}_{2}(\mathbb{R}^{6})/\mathbb{Z}_{\ell}$ \ \  and \ \ $\mathrm{e}(Q) = \displaystyle \Big [\frac{\ell \sqrt{-1}}{2\pi}d\mathcal{A} \Big ] \in H^{2}( {\rm{Gr}}(2,\mathbb{C}^{4}),\mathbb{Z}).$
    
\end{center}
Hence, we obtain the Euler class of the principal circle bundle defined by $ Q(-\ell \omega_{\alpha}) =\mathscr{V}_{2}(\mathbb{R}^{6})/\mathbb{Z}_{\ell}$.
\end{example}

Let us explain how the examples above fit inside of a more general setting. Let $G^{\mathbb{C}}$ be a complex simply connected simple Lie group, and consider $P \subset G^{\mathbb{C}}$ as being a parabolic Lie subgroup. If we suppose that $P = P_{\Sigma \backslash \{\alpha\}}$, i.e. $P$ is a maximal parabolic subgroup, then we have 

\begin{center}
    
$\mathscr{P}(X_{ P_{\Sigma \backslash \{\alpha\}}},{\rm{U}}(1)) = \mathbb{Z}{\mathrm{e}}(Q(\omega_{\alpha})).$
    
\end{center}
In order to simplify the notation, let us denote $P_{\Sigma \backslash \{\alpha\}}$ by $P_{\omega_{\alpha}}$. A straightforward computation shows that 

\begin{center}
    
$I(X_{P_{\omega_{\alpha}}}) = \langle \delta_{P_{\omega_{\alpha}}},h_{\alpha}^{\vee} \rangle$ \ \ and \ \ $K_{X_{P_{\omega_{\alpha}}}}^{ \otimes \frac{1}{ \langle \delta_{P_{\omega_{\alpha}}},h_{\alpha}^{\vee} \rangle}} = L_{\chi_{\omega_{\alpha}}}^{-1},$

\end{center}
thus we have
\begin{equation}
\label{maxparaboliccase}
Q(K_{X_{P_{\omega_{\alpha}}}}^{ \otimes \frac{1}{ \langle \delta_{P_{\omega_{\alpha}}},h_{\alpha}^{\vee} \rangle}}) = Q(-\omega_{\alpha}).    
\end{equation}
Now, consider the following definition. 
\begin{definition}[\cite{MINUSCULE}, \cite{SARA}]
A fundamental weight $\omega_{\alpha}$ is called minuscule if it satisfies the condition

\begin{center}

$\langle \omega_{\alpha},h_{\beta}^{\vee} \rangle \in \{0,1\}, \ \forall \beta \in \Pi^{+}.$
    
\end{center}
A flag manifold $X_{P_{\omega_{\alpha}}}$ associated to a maximal parabolic subgroup $P_{\omega_{\alpha}}$ is called minuscule flag manifold if $\omega_{\alpha}$ is a minuscule weight.
\end{definition}
The flag manifolds of the previous examples are particular cases of flag manifolds defined by maximal parabolic Lie subgroups. Being more specific, they are examples of minuscule flag manifolds. Moreover, examples of flag manifolds associated to maximal parabolic Lie subgroups include Grassmannian manifolds ${\rm{Gr}}(k,\mathbb{C}^{n})$, odd dimensional quadrics $\mathbb{Q}^{2n-1}$, even dimensional quadrics $\mathbb{Q}^{2n-2}$, Lagrangian Grassmannian manifolds ${\rm{LGr}}(n,2n)$, Orthogonal Grassmannian manifolds ${\rm{OGr}}(n,2n)$, Cayley plane $\mathbb{O}{\rm{P}}^{2}$ and the Freudental variety $ {\rm{E}}_{7}/P_{\omega_{7}}$.

\section{Homogeneous contact structures and Sasaki-Einstein metrics}
\label{sec4}

In this section we provide the proofs for the main results of this paper, which are essentially based on the description of contact structures on homogeneous contact manifolds by means of the transversal K\"{a}hler geometry of flag manifolds. 

The results are presented as follows: In Subsection \ref{basiccase}, we provide an outline of how the results developed in the previous sections can be combined in order to obtain an expression for the contact structure in the particular case of flag manifolds defined by maximal parabolic Lie subgroups. In Subsection \ref{subsec4.2}, we prove Theorem \ref{Theo1}. This theorem provides a complete description of contact structures on compact homogeneous contact manifolds. In Subsection \ref{subsec4.3}, we show how the result of Theorem \ref{Theo1} can be used to describe Sasakian-Einstein structures on compact homogeneous contact manifolds and the induced Calabi-Yau structures on their symplectizations. The main goal of this last subsection is to prove Theorem \ref{Theo2} and Theorem \ref{Theo3}.

\subsection{Basic model}
\label{basiccase}
As mentioned above, in this section we will prove the main results of this work. In order to motivate the ideas involved in the proofs which we shall cover in the next subsections, let us start by recalling some basic facts.

As we have seen, given a compact homogeneous contact manifold $(M,\eta,G)$ we have that 

\begin{center}
    
$M = Q(L),$
    
\end{center}
for some ample line bundle $L^{-1} \in {\text{Pic}}(X_{P})$, and under the assumption that $c_{1}(L^{-1})$ defines a K\"{a}hler-Einstein metric on $X_{P} = G^{\mathbb{C}}/P$, we have

\begin{center}
$L = K_{X_{P}}^{\otimes \frac{\ell}{I(X_{P})}},$
\end{center}
for some $\ell \in \mathbb{Z}_{>0}$.

The examples of compact homogeneous contact manifolds associated to flag manifolds defined by maximal parabolic Lie subgroups will be useful for us in the next subsections. In what follows we shall further explore these particular examples. As we have seen, from \ref{maxparaboliccase}, if $P = P_{\omega_{\alpha}}$ it follows that 

\begin{center}
    
$M = Q(- \ell \omega_{\alpha}) = Q(- \omega_{\alpha})/\mathbb{Z}_{\ell},$
    
\end{center}
for some $\ell \in \mathbb{Z}_{>0}$. Hence, from Proposition \ref{locconnection} we have a connection $\eta'_{\alpha}$ defined on $Q(- \ell \omega_{\alpha})$ by

\begin{center}
$\eta_{\alpha}' =  \displaystyle \frac{\ell}{2}\big ( \partial - \overline{\partial} \big ) \log||s_{i} v_{\omega_{\alpha}}^{+}||^{2}+ \frac{da_{i}}{a_{i}},$
\end{center}
thus a contact structure on $M = Q(-\ell \omega_{\alpha})$ is obtained from $\eta = - \sqrt{-1} \eta'_{\alpha}$. If we consider $a_{i} = \mathrm{e}^{\sqrt{-1}\theta_{i}}$, where $\theta_{i}$ is real and defined up to an integral multiple of $2 \pi$, we have that 

\begin{center}

$\eta = \displaystyle - \frac{\ell \sqrt{-1}}{2}\big ( \partial - \overline{\partial} \big ) \log||s_{i} v_{\omega_{\alpha}}^{+}||^{2} + d\theta_{i},$    
    
\end{center}
it is not difficult to see that

\begin{center}

$d\eta = 2\pi \ell \Omega_{\alpha},$    
    
\end{center}
for the sake of simplicity we omitted the pullback of the projection map in the equality above. This particular case turns out to be the basic model for all the cases which we have described in the examples of the previous sections. As we will see later, the ideas developed above are essentially the model for the general case of compact homogeneous contact manifolds. In the next subsections we will come back to this basic example several times.

\subsection{Homogeneous contact structures}
\label{subsec4.2}
In this subsection we prove the following result. 

\begin{theorem}
\label{main1}
Let $(M,\eta,G)$ be a compact connected homogeneous contact manifold, then $M$ is the principal $S^{1}$-bundle given by the sphere bundle 
\begin{equation}
\label{contacthomo}
M = \Big \{ u \in L \ \Big | \ \sqrt{H(u,u)} = 1 \Big\} ,
\end{equation}
for some ample line bundle $L^{-1} \in {\text{Pic}}(X_{P})$, where $X_{P} = G^{\mathbb{C}}/P$ is a flag manifold defined by some parabolic Lie subgroup $P \subset G^{\mathbb{C}}$. Furthermore, if $c_{1}(L^{-1})$ defines a K\"{a}hler-Einstein metric on $X_{P}$, it follows that $M = Q(K_{X_{P}}^{\otimes \frac{\ell}{I(X_{P})}})$, for some $\ell \in \mathbb{Z}_{>0}$, and its contact structure $\eta$ is (locally) given by
\begin{equation}
\label{structure}
\eta = \displaystyle - \frac{\ell \sqrt{-1}}{2I(X_{P})}\big ( \partial - \overline{\partial} \big )\log \big | \big |s_{U}v_{\delta_{P}}^{+} \big| \big |^{2} + d\theta_{U},
\end{equation}
for some local section $s_{U} \colon U \subset X_{P} \to G^{\mathbb{C}}$, where $v_{\delta_{P}}^{+}$ denotes the highest weight vector of weight $\delta_{P}$ associated to the irreducible $\mathfrak{g}^{\mathbb{C}}$-module $V(\delta_{P})$.
\end{theorem}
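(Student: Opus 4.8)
The plan is to separate the statement into its \emph{structural} part---the realization of $M$ as the unit sphere bundle $Q(L)$ of an ample class over a flag manifold---and its \emph{analytic} part---the closed local formula \ref{structure} for $\eta$---and to assemble each from the results of Sections \ref{Sec2} and \ref{sec3}.

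First I would establish the structural claim. By Theorem \ref{contacthomogeneousclass} a compact connected homogeneous contact manifold $(M,\eta,G)$ has finite fundamental group and its universal cover is a compact simply connected homogeneous contact manifold; by Theorem \ref{BWhomo} and Proposition \ref{CONTACTSIMPLY} the latter is a circle bundle $Q(L)$ over a flag manifold $X_P = G^{\mathbb{C}}/P$ with $L^{-1}$ ample. Writing $M = \widetilde{M}/\mathbb{Z}_{\ell}$ exhibits $M$ itself as the sphere bundle $\{u \in L \mid \sqrt{H(u,u)} = 1\}$. Under the Einstein hypothesis on $c_1(L^{-1})$ I would reproduce the Chern-class argument of Proposition \ref{CONTACTSIMPLY}: the relation ${\text{Ric}}(\omega_N) = k\omega_N$ forces the K\"ahler class, and hence the Euler class of $M$, to be proportional to $c_1(X_P)$; since $\tfrac{1}{I(X_P)}c_1(X_P)$ is the indivisible generator in the anticanonical direction, the Euler class is an integral multiple $\tfrac{\ell}{I(X_P)}c_1(X_P)$, giving $L = K_{X_P}^{\otimes \frac{\ell}{I(X_P)}}$ with $\ell = |\pi_1(M)| \in \mathbb{Z}_{>0}$.

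For the analytic part I would first rewrite $L$ in representation-theoretic terms. Proposition \ref{C8S8.2Sub8.2.3Eq8.2.35} gives $K_{X_P}^{-1} = L_{\chi_{\delta_P}}$, so $L = L_{\chi_{-\frac{\ell}{I(X_P)}\delta_P}}$ and, in the notation \ref{notationsimple}, $M = Q\!\big(-\tfrac{\ell}{I(X_P)}\delta_P\big)$. Applying the general-weight analogue of Proposition \ref{locconnection} (precisely the computation carried out there for fundamental weights, and reproduced in the basic model of Subsection \ref{basiccase}), the induced connection is $\eta' = \pi^{\ast}\mathcal{A} + \tfrac{da}{a}$ with gauge potential $\mathcal{A} = +\tfrac{\ell}{2I(X_P)}(\partial - \overline{\partial})\log\|s_U v_{\delta_P}^{+}\|^2$, the sign being opposite to that of Proposition \ref{locconnection} because $M$ is the sphere bundle of the dual (ample) class $L^{-1}$, exactly as for $Q(-\ell\omega_\alpha)$ in Subsection \ref{basiccase}. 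The representation-theoretic input that lets me collapse the product $\prod_{\alpha}\|s_U v_{\omega_\alpha}^{+}\|^{2\langle\delta_P,h_\alpha^\vee\rangle}$ of the quasi-potential \ref{quasipotential} into the single norm $\|s_U v_{\delta_P}^{+}\|^2$ is the multiplicativity of highest-weight vectors under the Cartan product, i.e. $v_{\delta_P}^{+} = \bigotimes_{\alpha}(v_{\omega_\alpha}^{+})^{\otimes\langle\delta_P,h_\alpha^\vee\rangle}$, which identifies the two potentials up to a pluriharmonic term annihilated by $\partial - \overline{\partial}$. Substituting and setting $\eta = -\sqrt{-1}\,\eta'$ with $a = e^{\sqrt{-1}\theta_U}$, so that $-\sqrt{-1}\,\tfrac{da}{a} = d\theta_U$, yields exactly \ref{structure}.

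The step I expect to be the main obstacle is this passage from the fundamental-weight case of Proposition \ref{locconnection} to the non-fundamental weight $\tfrac{\ell}{I(X_P)}\delta_P$: one must verify that the gauge-potential and curvature formulas are genuinely linear in the weight and that replacing the product of fundamental norms by the single norm $\|s_U v_{\delta_P}^{+}\|^2$ changes the potential only by a pluriharmonic function---so that neither the connection's curvature nor the $(\partial-\overline{\partial})$ of its potential is affected. This is precisely where the Cartan-product identity, combined with the compatibility of the fixed $G$-invariant inner products of Remark \ref{innerproduct}, does the work; once it is in place the remainder is the direct substitution modeled on Subsection \ref{basiccase}.
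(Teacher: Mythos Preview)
Your proposal is correct and follows essentially the same route as the paper: the structural part via Theorem~\ref{contacthomogeneousclass}, Theorem~\ref{BWhomo}, and Proposition~\ref{CONTACTSIMPLY}, and the analytic part by decomposing $K_{X_P}^{\otimes \ell/I(X_P)}$ into fundamental-weight pieces, applying Proposition~\ref{locconnection}, and collapsing the product $\prod_{\alpha}\|s_U v_{\omega_\alpha}^{+}\|^{2\langle\delta_P,h_\alpha^\vee\rangle}$ to $\|s_U v_{\delta_P}^{+}\|^2$ via the Cartan-product identity $v_{\delta_P}^{+}=\bigotimes_{\alpha}(v_{\omega_\alpha}^{+})^{\otimes\langle\delta_P,h_\alpha^\vee\rangle}$. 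One small correction: with the product inner product of Remark~\ref{innerproduct} the two potentials are \emph{equal}, not merely equal up to a ``pluriharmonic'' term, so no correction term is needed.
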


\begin{proof}
The characterization \ref{contacthomo} follows from the Boothby-Wang fibration \ref{BWhomo}, see also Remark \ref{amplecase}. Thus, from Theorem \ref{AZADBISWAS}, and Proposition \ref{C8S8.2Sub8.2.3P8.2.6}, we have $M = Q(L)$, such that 
\begin{equation}
\label{veryample}
L^{-1} = \bigotimes_{\alpha \in \Sigma \backslash \Theta}L_{\chi_{\omega_{\alpha}}}^{\otimes \ell_{\alpha}},
\end{equation}
where $\ell_{\alpha} \in \mathbb{Z}_{>0}$, $\forall \alpha \in \Sigma \backslash \Theta$. Notice that $\eta \in \Omega^{1}(M)^{G}$ such that 
\begin{equation}
\label{curvatureveryample}
d\eta = \sum_{\alpha \in \Sigma \backslash \Theta} 2 \pi \ell_{\alpha}\pi^{\ast}\Omega_{\alpha},
\end{equation}
where $\Omega_{\alpha} \in \Omega^{1,1}(X_{P})^{G}$, $\forall \alpha \in \Sigma \backslash \Theta$, see \ref{basicforms}. Thus, we have that $c_{1}(L^{-1}) > 0$, which provides an explicit description of the $G$-invariant Hodge metric induced by $M$ on $X_{P}$ via Theorem \ref{AZADBISWAS}.

Now, If we suppose that $c_{1}(L^{-1})$ defines a K\"{a}hler-Einstein metric on $X_{P}$, it follows from Proposition \ref{CONTACTSIMPLY}, and Theorem \ref{contacthomogeneousclass}, that  

\begin{center}
    
$M = Q(K_{X_{P}}^{\otimes \frac{\ell}{I(X_{P})}}).$
    
\end{center}
From Proposition \ref{C8S8.2Sub8.2.3Eq8.2.35}, and Remark \ref{productcircle}, we obtain

\begin{center}

$M = Q(K_{X_{P}}^{\otimes \frac{\ell}{I(X_{P})}}) = \displaystyle \sum_{\alpha \in \Sigma \backslash \Theta}Q \big ( \textstyle{- \frac{\ell \langle \delta_{P},h_{\alpha}^{\vee} \rangle}{I(X_{P})}} \omega_{\alpha} \big).$
    
\end{center}
Therefore, from Proposition \ref{connection} we have a connection one-form on $M$ defined by

\begin{center}
    
$\eta' = \displaystyle \sum_{\alpha \in \Sigma \backslash \Theta}  \frac{\ell  \langle \delta_{P},h_{\alpha}^{\vee} \rangle}{2 I(X_{P})} ( \partial - \overline{\partial} \big ) \log||s_{U} v_{\omega_{\alpha}}^{+}||^{2}+ a_{U}^{-1}da_{U},$
    
\end{center}
thus our contact structure is $\eta = - \sqrt{-1} \eta'$. If we consider $a_{U} = \mathrm{e}^{\sqrt{-1}\theta_{U}}$, where $\theta_{U}$ is real and is defined up to an integral multiple of $2 \pi$, by rearranging the expression above we obtain

\begin{center}
    
$\eta = \displaystyle - \frac{\ell \sqrt{-1}}{2 I(X_{P})}\big ( \partial - \overline{\partial} \big )\log \Big ( \prod_{\alpha \in \Sigma \backslash \Theta} ||s_{U}v_{\omega_{\alpha}}^{+}||^{2  \langle \delta_{P},h_{\alpha}^{\vee} \rangle}\Big) + d\theta_{U}.$
    
\end{center}
Now, we recall some basic facts about representation theory of simple Lie algebras \cite[p. 186]{PARABOLICTHEORY}.

\begin{enumerate}

    \item $V(\delta_{P}) \subset \bigotimes_{\alpha \in \Sigma \backslash \Theta} V(\omega_{\alpha})^{ \otimes \langle \delta_{P},h_{\alpha}^{\vee} \rangle}$; 
    
    \item $v_{\delta_{P}}^{+} = \bigotimes_{\alpha \in \Sigma \backslash \Theta}v_{\omega_{\alpha}}^{+ \otimes \langle \delta_{P},h_{\alpha}^{\vee} \rangle}$, where $v_{\omega_{\alpha}}^{+} \in V(\omega_{\alpha})$ is the highest weight vector of highest weight $\omega_{\alpha}$, $\forall \alpha \in \Sigma \backslash \Theta$.
    
\end{enumerate}
From these two facts, by considering  the $G$-invariant inner product $\langle \cdot, \cdot \rangle_{\alpha}$ on each fundamental $\mathfrak{g}^{\mathbb{C}}$-module $V(\omega_{\alpha})$, see Remark \ref{innerproduct}, we have a $G$-invariant inner product on the Cartan product of fundamental representations 

\begin{center}
$\displaystyle \bigotimes_{\alpha \in \Sigma \backslash \Theta} V(\omega_{\alpha})^{ \otimes \langle \delta_{P},h_{\alpha}^{\vee} \rangle},$ 
\end{center}
defined naturally by

\begin{center}
    
$\langle \cdot, \cdot \rangle =  \displaystyle \prod_{\alpha \in \Sigma \backslash \Theta} \langle \cdot, \cdot \rangle_{\alpha}^{\langle \delta_{P},h_{\alpha}^{\vee} \rangle}.$
    
\end{center}
The inner product described above restricted to $V(\delta_{P})$ defines a norm such that

\begin{center}
$||v_{\delta_{P}}^{+}||^{2} = \displaystyle \prod_{\alpha \in \Sigma \backslash \Theta}||v_{\omega_{\alpha}}^{+}||^{2  \langle \delta_{P},h_{\alpha}^{\vee} \rangle},$
\end{center}
thus we obtain

\begin{center}
$\eta = \displaystyle - \frac{\ell \sqrt{-1}}{2I(X_{P})}\big ( \partial - \overline{\partial} \big )\log ||s_{U}v_{\delta_{P}}^{+}||^{2} + d\theta_{U},$
\end{center}
from this we have the desired expression \ref{structure}.
\end{proof}

\begin{remark}
\label{contactample}
Notice that in the general case that $M = Q(L)$, for some ample line bundle $L^{-1} \in \text{Pic}(X_{P})$, it follows from \ref{veryample}, and \ref{curvatureveryample}, that
\begin{equation}
\label{contactveryample}
\eta = - \displaystyle \frac{\sqrt{-1}}{2}\big ( \partial - \overline{\partial} \big ) \log \Big ( \prod_{\alpha \in \Sigma \backslash \Theta} ||s_{U}v_{\omega_{\alpha}}^{+}||^{2  \ell_{\alpha}}\Big) + d\theta_{U},
\end{equation}
for some local section $s_{U} \colon U \subset X_{P} \to G^{\mathbb{C}}$. Therefore, Theorem \ref{main1} allows us to describe explicitly the contact structure for any compact homogeneous contact manifold.
\end{remark}

As we can see, in Equation \ref{contactveryample} the contact structure of a compact homogeneous contact manifold can be completely described by elements of representation theory and some geometric structures associated to the parabolic Cartan geometry $(G^{\mathbb{C}},P)$.

We recall that when $(M,\eta,G)$ is a simply connected compact homogeneous contact manifold, from Proposition \ref{C8S8.2Sub8.2.3Eq8.2.35}, and the convention \ref{notationsimple}, under the assumption of the Einstein condition, we have

\begin{center}
    
$M = Q(-\frac{\delta_{P}}{I(X_{P})})$.
    
\end{center}
For the sake of simplicity, we shall denote
\begin{equation}
\label{maximalroot}
\mathcal{Q}_{P} := Q(-\textstyle{\frac{\delta_{P}}{I(X_{P})}}),
\end{equation}
to stand for a simply connected homogeneous contact manifold associated to the maximal root of the canonical bundle of $X_{P}$. The next section will be devoted to study the contact manifold $(\mathcal{Q}_{P},\eta,G)$ and some immediate consequences of Theorem \ref{main1}.

\subsection{Sasaki-Einstein structures and Calabi-Yau cones} 
\label{subsec4.3}
Let $(\mathcal{Q}_{P},\eta,G)$ be the simply connected compact homogeneous contact manifold as in \ref{maximalroot}. From Theorem \ref{structure} and Equation \ref{Cherncanonical} we have

\begin{center}
    
$\displaystyle \frac{d\eta}{2 \pi} = \frac{1}{I(X_{P})} \pi_{\mathcal{Q}_{P}}^{\ast} \omega_{X_{P}},$
    
\end{center}
recall the expression of $\omega_{X_{P}}$ from \ref{localform}. The equation above essentially tells us that $\mathrm{e}(\mathcal{Q}_{P}) = -\frac{1}{I(X_{P})}c_{1}(X_{P})$. Since ${\text{Ric}}(\omega_{X_{P}}) = 2 \pi \omega_{X_{P}}$, we consider the following rescaled K\"{a}hler metric on $X_{P}$

\begin{equation}
\label{newkahler}
\widetilde{\omega}_{X_{P}} = \frac{\pi}{n+1} \omega_{X_{P}},     
\end{equation}
where $\dim_{\mathbb{C}}(X_{P}) = n$. From this, since we have ${\text{Ric}}(c \omega_{X_{P}}) = {\text{Ric}}(\omega_{X_{P}})$, $\forall c>0$, it follows that 

\begin{center}
    
${\text{Ric}}(\widetilde{\omega}_{X_{P}}) = 2(n+1)\widetilde{\omega}_{X_{P}}$,
    
\end{center}
thus the metric induced by $\widetilde{\omega}_{X_{P}}$ has scalar curvature $S_{\widetilde{\omega}_{X_{P}}} = 4n(n+1)$. Hence, if we take the connection $\eta' = \sqrt{-1}\eta$ on $\mathcal{Q}_{P}$, we obtain
\begin{equation}
\label{YM}
d\eta' = \displaystyle \frac{2(n+1)}{I(X_{P})}\sqrt{-1}\pi_{\mathcal{Q}_{P}}^{\ast}\widetilde{\omega}_{X_{P}}.
\end{equation}
From these we have the following result.

\begin{theorem}
\label{main2}
Let $(M = \mathcal{Q}_{P} / \mathbb{Z}_{\ell},\eta,G)$ be a compact connected homogeneous contact manifold. Then, $(M = \mathcal{Q}_{P} / \mathbb{Z}_{\ell},\eta,G)$ admits a homogeneous Sasaki-Einstein structure $(g_{M}, \phi,\xi = \frac{\ell(n+1)}{I(X_{P})}\frac{\partial}{\partial \theta}, \frac{I(X_{P})}{\ell(n+1)}\eta )$, such that 
\begin{equation}
g_{M} = \displaystyle \frac{I(X_{P})}{\ell(n+1)} \Bigg ( \frac{1}{2}d \eta ({\rm{id}} \otimes \phi)  + \frac{I(X_{P})}{\ell(n+1)}\eta \otimes \eta \Bigg ),
\end{equation}
where 
\begin{center}

$\eta = \displaystyle - \frac{\ell\sqrt{-1}}{2I(X_{P})}\big ( \partial - \overline{\partial} \big )\log \big | \big |s_{U}v_{\delta_{P}}^{+} \big| \big |^{2} + d\theta_{U},$
\end{center}
for some local section $s_{U} \colon U \subset X_{P} \to G^{\mathbb{C}}$, where $v_{\delta_{P}}^{+}$ denotes the highest weight vector of weight $\delta_{P}$ associated to the irreducible $\mathfrak{g}^{\mathbb{C}}$-module $V(\delta_{P})$. Furthermore, we also have $\phi \in {\text{End}}(TM)$ completely determined by the invariant complex structure of $X_{P}$ and the horizontal lift of the Cartan-Ehresmann connection $ \frac{I(X_{P})\sqrt{-1}}{\ell(n+1)}\eta \in \Omega^{1}(M;\mathfrak{u}(1))$.

\end{theorem}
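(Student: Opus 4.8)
The plan is to exhibit $(M = \mathcal{Q}_{P}/\mathbb{Z}_{\ell},\eta,G)$ as a principal $\mathrm{U}(1)$-bundle over the K\"ahler-Einstein Fano flag manifold $X_{P}$ and to produce the Sasaki-Einstein structure by the Boothby-Wang/Kaluza-Klein construction of Example \ref{EXAMPLESASAKIAN}, with the normalization constants adjusted to absorb both the Einstein rescaling \ref{newkahler} and the $\ell$-fold quotient. First I would record the base data: by \ref{newkahler} the rescaled metric $\widetilde{\omega}_{X_{P}} = \frac{\pi}{n+1}\omega_{X_{P}}$ satisfies ${\text{Ric}}(\widetilde{\omega}_{X_{P}}) = 2(n+1)\widetilde{\omega}_{X_{P}}$ with scalar curvature $4n(n+1)$, and I set $g_{X_{P}} = \widetilde{\omega}_{X_{P}}(\mathrm{id}\otimes J)$ for the invariant complex structure $J$ on $X_{P}$.

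Next I would pin down the connection. Taking $\eta' = \sqrt{-1}\eta$ with $\eta$ as in Theorem \ref{main1} and writing $M = Q(K_{X_{P}}^{\otimes \ell/I(X_{P})})$, the curvature identity \ref{YM} acquires a factor $\ell$, namely $d\eta' = \frac{2\ell(n+1)}{I(X_{P})}\sqrt{-1}\,\pi_{M}^{\ast}\widetilde{\omega}_{X_{P}}$, so that $\frac{1}{2}d\eta = \frac{\ell(n+1)}{I(X_{P})}\pi_{M}^{\ast}\widetilde{\omega}_{X_{P}}$. I then introduce the normalized contact form $\widetilde{\eta} = \frac{I(X_{P})}{\ell(n+1)}\eta$, the field $\xi = \frac{\ell(n+1)}{I(X_{P})}\frac{\partial}{\partial\theta}$ generating the structure-group action, and the tensor $\phi$ defined by $\phi|_{\ker\eta} = (J\pi_{M\ast}(\cdot))^{H}$ (horizontal lift relative to $\eta'$) and $\phi(\xi)=0$. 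A short check using $\eta(\frac{\partial}{\partial\theta})=1$ gives $\widetilde{\eta}(\xi)=1$ and $\iota_{\xi}d\widetilde{\eta} = 0$, so $\xi$ is precisely the Reeb field of $\widetilde{\eta}$, together with the transverse identity $\frac{1}{2}d\widetilde{\eta} = \pi_{M}^{\ast}\widetilde{\omega}_{X_{P}}$.

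With these in hand I would verify directly that $(g_{M},\phi,\xi,\widetilde{\eta})$ is a contact metric structure in the sense of Definition \ref{ContMetric}. Unpacking the formula \ref{metricsasakieinstein} shows that $g_{M}$ restricts to $\pi_{M}^{\ast}g_{X_{P}}$ on $\ker\eta$ and to $\widetilde{\eta}\otimes\widetilde{\eta}$ on the vertical direction, so that $g_{M} = \frac{1}{2}d\widetilde{\eta}(\mathrm{id}\otimes\phi) + \widetilde{\eta}\otimes\widetilde{\eta}$; the identities $\phi^{2} = -\mathrm{id} + \widetilde{\eta}\otimes\xi$ and $g_{M}(\phi\otimes\phi) = g_{M} - \widetilde{\eta}\otimes\widetilde{\eta}$ then follow from $\phi$ being the horizontal lift of the $g_{X_{P}}$-compatible $J$, while $d\widetilde{\eta} = 2g_{M}(\phi\otimes\mathrm{id})$ is equivalent to the transverse identity above. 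Since $\xi$ generates the $\mathrm{U}(1)$-action up to scale and $g_{M},\phi,\widetilde{\eta}$ are all built $\mathrm{U}(1)$-invariantly, $\mathscr{L}_{\xi}g_{M}=0$, so the structure is $K$-contact; the integrability of $J$ on $X_{P}$ then yields the Sasaki condition through the transverse-K\"ahler characterization recorded after Definition \ref{sasakikahler}. Finally, because ${\text{Ric}}(\widetilde{\omega}_{X_{P}}) = 2(n+1)\widetilde{\omega}_{X_{P}}$, Remark \ref{riccitensorsasaki} gives ${\text{Ric}}_{M} = 2n\,g_{M}$, so the structure is Sasaki-Einstein; homogeneity follows since the $G$-action descends from $\mathcal{Q}_{P}$, commutes with the fiber $\mathrm{U}(1)$, and preserves $\eta$ and $J$, hence all four structure tensors.

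The hard part will be the constant bookkeeping in the previous steps. Both the Fano-index rescaling and the passage to the $\ell$-fold quotient $\mathcal{Q}_{P}/\mathbb{Z}_{\ell}$ enter multiplicatively, and the coefficients $\frac{I(X_{P})}{\ell(n+1)}$ appearing in $\widetilde{\eta}$, $\xi$, and $g_{M}$ must be fixed precisely so that simultaneously (i) $\xi$ is a genuinely unit-normalized Reeb field for $\widetilde{\eta}$, and (ii) the induced transverse metric is exactly the Einstein-normalized $\widetilde{\omega}_{X_{P}}$ rather than a multiple of it. Once these normalizations are matched, the Sasaki and Einstein properties are forced by the general facts already assembled in Remark \ref{riccitensorsasaki} and the K\"ahler geometry of $X_{P}$, so no new analytic input is required beyond Example \ref{EXAMPLESASAKIAN}.
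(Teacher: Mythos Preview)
Your proposal is correct and follows essentially the same route as the paper's proof: both invoke the Boothby--Wang/Kaluza--Klein construction of Example~\ref{EXAMPLESASAKIAN}, define $\phi$ via horizontal lift of $J$, verify the contact-metric axioms of Definition~\ref{ContMetric} and the $K$-contact condition directly, and then read off the Sasaki and Einstein properties from the transverse K\"ahler--Einstein geometry of $(X_{P},\widetilde{\omega}_{X_{P}})$ via Remark~\ref{riccitensorsasaki}. The only organizational difference is that the paper first treats the simply connected case $M=\mathcal{Q}_{P}$ and then passes to the quotient by rescaling, whereas you carry the factor $\ell$ through from the outset; this is purely cosmetic.
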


\begin{proof}
We first consider the case where $M$ is simply connected, i.e. $M = \mathcal{Q}_{P}$. The proof is essentially an application of the general construction described in Example \ref{EXAMPLESASAKIAN}. Let us outline the main ideas involved. Consider $\xi \in \Gamma(T\mathcal{Q}_{P})$ as being the Reeb vector field defined by the homogeneous contact structure $\frac{I(X_{P})}{(n+1)}\eta \in \Omega^{1}(\mathcal{Q}_{P})^{G}$, namely $\xi = \frac{n+1}{I(X_{P})} \frac{\partial}{\partial \theta}$. We define $\phi \in {\text{End}}(T\mathcal{Q}_{P})$ by setting
\begin{center}
$ \phi(X) := \begin{cases}
    (J\pi_{\ast}X)^{H}, \ \ \ {\text{if}}  \ \ X \bot \xi.\\
    \ \ \ \ \  0 \  \  \ \ \ \ \ ,  \ \  \ {\text{if}} \ \ X \parallel \xi .                   \\
  \end{cases}$
    
\end{center}
Here we denote by $(J\pi_{\ast}X)^{H}$ the horizontal lift of $J\pi_{\ast}X$ relative to the connection $\frac{I(X_{P}) \sqrt{-1}}{(n+1)}\eta$, $\forall X \in \Gamma(T\mathcal{Q}_{P})$.

Notice that the metric $g_{M}$ is given by

\begin{center}
$g_{M} = \pi^{\ast}\widetilde{g}_{X_{P}} + \frac{I(X_{P})^{2}}{(n+1)^{2}}\eta \otimes \eta,$
\end{center}
where $\widetilde{g}_{X_{P}} = \widetilde{\omega}_{X_{P}}({\text{id}} \otimes J)$ is the K\"{a}hler metric on $X_{P}$ as in \ref{newkahler}, thus we also have $\frac{I(X_{P})}{(n+1)}\eta = g_{M}(\xi,\cdot)$.

In order to simplify the notation, let us denote $\overline{\eta} = \frac{I(X_{P})}{(n+1)}\eta$. The fact that $\phi \circ \phi = - {\rm{id}} + \overline{\eta} \otimes \xi$ follows from its definition. Moreover, by definition of $\phi \in {\text{End}}(T\mathcal{Q}_{P})$ it is straightforward to check that
\begin{center}
    
$g_{M}(\phi \otimes \phi) = g_{M} - \overline{\eta} \otimes \overline{\eta}$ \ \ and \ \ $d \overline{\eta} = 2g_{M}(\phi \otimes {\text{id}}),$
    
\end{center}
the identities above follow from the fact that $\overline{\eta}(\phi(X)) = 0$, $\forall X \in \Gamma(T\mathcal{Q}_{P})$, and $\phi \circ (\overline{\eta} \otimes \xi) \equiv 0$. Hence, we have that $(g_{M},\phi,\xi,\overline{\eta})$ defines a contact metric structure on $\mathcal{Q}_{P}$.

Now, since $\mathscr{L}_{\xi} d \overline{\eta} ({\rm{id}} \otimes \phi) = 0$, and

\begin{center}
    
$\mathscr{L}_{\xi} \overline{\eta} = \iota_{\xi}(d\overline{\eta}) + d(\iota_{\xi} \overline{\eta}) = d \overline{\eta}(\xi,\cdot) = 0,$
    
\end{center}
it follows that $\mathscr{L}_{\xi} g_{M} = 0\Longrightarrow \xi$ is a Killing vector. Therefore, we have that $(g_{M},\phi,\xi,\overline{\eta})$ defines a ${\text{K}}$-contact structure on $\mathcal{Q}_{P}$. The fact that $(g_{M},\phi,\xi,\overline{\eta})$ is a Sasaki-Einstein structure is a consequence of the fact that $(X_{P},\widetilde{\omega}_{X_{P}})$ is a K\"{a}hler-Einstein manifold with scalar curvature $S_{\widetilde{\omega}_{X_{P}}} = 4n(n+1)$, see for instance  \cite[Theorem 7.3.12]{BOYERGALICKI}, and \cite[Theorema 2]{HATAKEYMA}.

Now, if we consider the compact homogeneous contact manifold $(M,\eta,G)$ such that

\begin{center}
    
$M = \mathcal{Q}_{P} / \mathbb{Z}_{\ell},$
    
\end{center}
where $\pi_{1}(M) = \mathbb{Z}_{\ell}$, for some parabolic Lie subgroup $P \subset G^{\mathbb{C}}$. In order to obtain the desired structure, we just rescale the metric $\omega_{X_{P}}$, as in \ref{newkahler}, and take $\overline{\eta} = \frac{I(X_{P})}{\ell (n+1)} \eta$, notice that here we consider $\eta$ as in \ref{structure}. From these, the result follows from the same arguments as in the simply connected case.
\end{proof}

\begin{remark}

As we have seen in the proof of the result above, since we have $\frac{I(X_{P})}{\ell(n+1)}d\eta = 2 \widetilde{\omega}_{X_{P}}$, see \ref{newkahler}, it follows that $\frac{I(X_{P}) \sqrt{-1}}{\ell(n+1)}\eta$ is a Yang-Mills connection. Therefore, from O'Neill's formulas for Riemannian submersions (e.g. \cite{ONEILL}) we can show that $g_{M}$ is the unique Einstein metric on $M = \mathcal{Q}_{P}/\mathbb{Z}_{\ell}$ naturally defined by $\widetilde{\omega}_{X_{P}}$ and $\eta$, i.e., horizontally determined by $\widetilde{\omega}_{X_{P}}$ and vertically determined by the length of ${\rm{U}}(1) = S^{1}$, see for instance \cite{KOBAYASHI}.
\end{remark}

\begin{remark}
\label{sasakiample}
It is worth pointing out that in the general case when $M = Q(L)$, for some ample line bundle $L^{-1} \in \text{Pic}(X_{P})$, we can use the connection induced by \ref{contactveryample} to obtain an explicit Sasaki structure on $M = Q(L)$ described in terms of Lie theory, see also Remark \ref{amplecase}. 

\end{remark}

In what follows we provide some examples in order to illustrate Theorem \ref{main2}.

\begin{example}[Basic model] As in Subsection \ref{basiccase}, consider the principal circle bundle

\begin{center}

${\rm{U}}(1) \hookrightarrow Q(-\ell \omega_{\alpha}) \to X_{P_{\omega_{\alpha}}}.$

\end{center}
As we have seen, in this case we have a connection $\eta'_{\alpha}$ defined on $Q(- \ell \omega_{\alpha})$ such that 

\begin{center}
$\eta_{\alpha}' =  \displaystyle \frac{\ell}{2}\big ( \partial - \overline{\partial} \big ) \log||s_{i} v_{\omega_{\alpha}}^{+}||^{2}+ a_{i}^{-1}da_{i}.$
\end{center}
Now, by applying Theorem \ref{main1}, we obtain a contact $1$-form on $Q(-\ell \omega_{\alpha})$ given by

\begin{center}

$\eta = \displaystyle - \frac{\ell \sqrt{-1}}{2}\big ( \partial - \overline{\partial} \big ) \log||s_{i} v_{\omega_{\alpha}}^{+}||^{2} + d\theta_{i},$    
    
\end{center}
recall that $I(X_{P_{\omega_{\alpha}}}) = \langle \delta_{P_{\omega_{\alpha}}},h_{\alpha}^{\vee} \rangle$. By taking $\overline{\eta} = \frac{\langle \delta_{P_{\omega_{\alpha}}},h_{\alpha}^{\vee} \rangle}{\ell (n+1)}\eta$, a straightforward computation shows that

\begin{center}

$\displaystyle \frac{d\overline{\eta}}{2} = \frac{\pi}{n+1}\omega_{X_{P_{\omega_{\alpha}}}} = \widetilde{\omega}_{X_{P_{\omega_{\alpha}}}},$

\end{center}
see Equation \ref{localform} for the expression of $\omega_{X_{P_{\omega_{\alpha}}}}$. Thus, since we have 

\begin{center}
$ \widetilde{\omega}_{X_{P_{\omega_{\alpha}}}} = - {\text{Im}}(H_{X_{P_{\omega_{\alpha}}}})$ \ \ and \ \ $\widetilde{g}_{X_{P_{\omega_{\alpha}}}} = {\text{Re}}(H_{X_{P_{\omega_{\alpha}}}})$, 
\end{center}
where $H_{X_{P_{\omega_{\alpha}}}}$ is the Hermitian structure given by 

\begin{center}

$H_{X_{P_{\omega_{\alpha}}}} = \displaystyle  \frac{\langle \delta_{P_{\omega_{\alpha}}},h_{\alpha}^{\vee} \rangle}{2(n+1)} \sum_{k,j}\frac{\partial^{2}}{\partial z_{k} \partial \overline{z}_{j}}\big ( \log||s_{i} v_{\omega_{\alpha}}^{+}||^{2} \big )dz_{k} \otimes d\overline{z}_{j},$
\end{center}
from Theorem \ref{main2} we obtain a Sasaki-Einstein metric on $Q(- \ell \omega_{\alpha})$ given by

\begin{center}

$g_{Q(- \ell \omega_{\alpha})} = \widetilde{g}_{X_{P_{\omega_{\alpha}}}} + \displaystyle \frac{\langle \delta_{P_{\omega_{\alpha}}},h_{\alpha}^{\vee} \rangle^{2}}{\ell^{2}(n+1)^{2}} \eta \otimes \eta$,

\end{center}
where the basic metric $\widetilde{g}_{X_{P_{\omega_{\alpha}}}} = \frac{\langle \delta_{P_{\omega_{\alpha}}},h_{\alpha}^{\vee} \rangle}{2(n+1)}d\eta ({\rm{id}} \otimes \phi)$ is given by

\begin{center}

$\widetilde{g}_{X_{P_{\omega_{\alpha}}}}  = \displaystyle  \frac{\langle \delta_{P_{\omega_{\alpha}}},h_{\alpha}^{\vee} \rangle}{2(n+1)} \sum_{k,j}\frac{\partial^{2}}{\partial z_{k} \partial \overline{z}_{j}}\big ( \log||s_{i} v_{\omega_{\alpha}}^{+}||^{2} \big ){\text{Re}}(dz_{k} \otimes d\overline{z}_{j}).$

\end{center}
Therefore, we obtain a Sasaki-Einstein structure on $Q(- \ell \omega_{\alpha})$ given by

\begin{center}

$(g_{Q(- \ell \omega_{\alpha})}, \overline{\eta} = \frac{\langle \delta_{P_{\omega_{\alpha}}},h_{\alpha}^{\vee} \rangle}{\ell (n+1)}\eta, \xi = \frac{\ell (n+1)}{\langle \delta_{P_{\omega_{\alpha}}},h_{\alpha}^{\vee} \rangle}\frac{\partial}{\partial \theta}, \phi),$ 

\end{center}
completely determined by $\eta$, notice that $\phi \in {\text{End}}(TQ(- \ell \omega_{\alpha}))$ is determined by the horizontal lift of the connection $\sqrt{-1}\overline{\eta} \in \Omega^{1}(Q(- \ell \omega_{\alpha});\mathfrak{u}(1))$ and the complex structure of $X_{P_{\omega_{\alpha}}}$.
\end{example}

The example above cover a huge class of homogeneous contact manifolds. Let us give two explicit examples which fit in this last context. 

\begin{example}[Hopf bundle]
As in Example \ref{HOPFBUNDLE}, consider the principal circle bundle

\begin{center}

${\rm{U}}(1) \hookrightarrow S^{3} \to \mathbb{C}{\rm{P}}^{1}$.

\end{center}
In this case we have a principal ${\rm{U}}(1)$-connection on $Q(-\omega_{\alpha}) = S^{3}$ (locally) defined by

\begin{center}

$\eta'_{\alpha} = \displaystyle  - \frac{1}{2}\frac{zd\overline{z} - \overline{z}dz}{(1+|z|^{2})} + a_{U}^{-1}da_{U}.$    
    
\end{center}
From Theorem \ref{main1} we have a contact $1$-form on $S^{3}$ given by

\begin{center}

$\eta =  \displaystyle \frac{\overline{z}dz - zd\overline{z}}{2\sqrt{-1}(1 + |z|^{2})} + d\theta_{U}$,

\end{center}
notice that $I(\mathbb{C}{\rm{P}}^{1}) = 2$ and $\eta = \overline{\eta}$. It is straightforward to check that 

\begin{center}

$\displaystyle \frac{d\eta}{2} = \frac{\sqrt{-1}}{2} \partial \overline{\partial} \log (1 + |z|^{2}) = \frac{\pi}{2} \omega_{\mathbb{C}{\rm{P}}^{1}} = \widetilde{\omega}_{\mathbb{C}{\rm{P}}^{1}},$

\end{center}
see Example \ref{exampleP1} to recall the expression of $\omega_{\mathbb{C}{\rm{P}}^{1}}$, and see also Equation \ref{newkahler}. Therefore, since we have $\widetilde{\omega}_{\mathbb{C}{\rm{P}}^{1}} = - {\text{Im}}(H_{\mathbb{C}{\rm{P}}^{1}})$ and $\widetilde{g}_{\mathbb{C}{\rm{P}}^{1}} = {\text{Re}}(H_{\mathbb{C}{\rm{P}}^{1}})$, where $H_{\mathbb{C}{\rm{P}}^{1}}$ is the Hermitian structure given by 
\begin{center}

$H_{\mathbb{C}{\rm{P}}^{1}} = \displaystyle \frac{dz \otimes d\overline{z}}{2(1 + |z|^{2})^{2}},$
\end{center}
from Theorem \ref{main2} we obtain a Sasaki-Einstein metric on $S^{3}$ given by

\begin{center}

$g_{S^{3}} = \displaystyle \frac{{\text{Re}}(dz \otimes d\overline{z})}{2(1 + |z|^{2})^{2}} + \bigg ( \displaystyle \frac{\overline{z}dz - zd\overline{z}}{2\sqrt{-1}(1 + |z|^{2})} + d\theta_{U}\bigg) \otimes \bigg ( \displaystyle \frac{\overline{z}dz - zd\overline{z}}{2\sqrt{-1}(1 + |z|^{2})} + d\theta_{U}\bigg).$
\end{center}
Hence, we have a Sasaki-Einstein structure $(g_{S^{3}}, \eta, \xi = \frac{\partial}{\partial \theta}, \phi)$ on $S^{3}$ completely determined by $\eta$.
\end{example}

\begin{remark}
It is worthwhile to notice that the computations above can be naturally generalized to the case provided by the principal ${\rm{U}}(1)$-bundle
\begin{center}
${\rm{U}}(1) \hookrightarrow S^{2n+1}/\mathbb{Z}_{\ell} \to \mathbb{C}{\rm{P}}^{n}$,
\end{center}
$\forall \ell \in \mathbb{Z}_{>0}$, see Example \ref{COMPLEXHOPF} for the Lie-theoretical approach.
\end{remark}

\begin{example}[Stiefel manifold] As in Example \ref{STIEFEL}, consider the principal ${\rm{U}}(1)$-bundle

\begin{center}

${\rm{U}}(1) \hookrightarrow  \mathscr{V}_{2}(\mathbb{R}^{6}) \to {\rm{Gr}}(2,\mathbb{C}^{4})$.

\end{center}
As we have seen, in this case we have a principal ${\rm{U}}(1)$-connection on $\mathscr{V}_{2}(\mathbb{R}^{6}) = Q(-\omega_{\alpha_{2}})$ defined by

\begin{center}

$\eta'_{\alpha_{2}} = \displaystyle \frac{1}{2}\big ( \partial - \overline{\partial} \big ) \log \Big ( 1 + \sum_{k = 1}^{4}|z_{k}|^{2} + \bigg |\det \begin{pmatrix}
 z_{1} & z_{3} \\
 z_{2} & z_{4}
\end{pmatrix} \bigg |^{2} \Big) + a_{U}^{-1}da_{U},$        
\end{center}
Thus, from Theorem \ref{main1} we have a contact $1$-form on $\mathscr{V}_{2}(\mathbb{R}^{6})$ given by
\begin{center}

$\eta = -\displaystyle \frac{\sqrt{-1}}{2}\big ( \partial - \overline{\partial} \big ) \log \Big ( 1 + \sum_{k = 1}^{4}|z_{k}|^{2} + \bigg |\det \begin{pmatrix}
 z_{1} & z_{3} \\
 z_{2} & z_{4}
\end{pmatrix} \bigg |^{2} \Big) + d\theta_{U}.$        
\end{center}
Now, by taking $\overline{\eta} = \frac{4}{5} \eta$ as in Theorem \ref{main2}, it follows that

\begin{center}

$\displaystyle \frac{d \overline{\eta}}{2} = \frac{2 \sqrt{-1}}{5} \partial \overline{\partial}  \log \Big ( 1 + \sum_{k = 1}^{4}|z_{k}|^{2} + \bigg |\det \begin{pmatrix}
 z_{1} & z_{3} \\
 z_{2} & z_{4}
\end{pmatrix} \bigg |^{2} \Big)$.

\end{center}
Thus, from Equations \ref{C8S8.3Sub8.3.2Eq8.3.21}, \ref{newkahler}, and the computation above, we have that

\begin{center}

$\displaystyle \frac{d \overline{\eta}}{2} = \frac{\pi}{5} \omega_{{\rm{Gr}}(2,\mathbb{C}^{4})} = \widetilde{\omega}_{{\rm{Gr}}(2,\mathbb{C}^{4})},$

\end{center}
it is worthwhile to notice that $I({\rm{Gr}}(2,\mathbb{C}^{4})) = 4$. Since we have $\widetilde{\omega}_{{\rm{Gr}}(2,\mathbb{C}^{4})} = - {\text{Im}}(H_{{\rm{Gr}}(2,\mathbb{C}^{4})})$ and $\widetilde{g}_{{\rm{Gr}}(2,\mathbb{C}^{4})} = {\text{Re}}(H_{{\rm{Gr}}(2,\mathbb{C}^{4})})$, where $H_{{\rm{Gr}}(2,\mathbb{C}^{4})}$ is the Hermitian structure given by 
\begin{center}

$H_{{\rm{Gr}}(2,\mathbb{C}^{4})} = \displaystyle  \frac{2}{5} \sum_{i,j= 1}^{4}\frac{\partial^{2}}{\partial z_{i} \partial \overline{z}_{j}} \log \Big ( 1 + \sum_{k = 1}^{4}|z_{k}|^{2} + \bigg |\det \begin{pmatrix}
 z_{1} & z_{3} \\
 z_{2} & z_{4}
\end{pmatrix} \bigg |^{2} \Big)dz_{i} \otimes d\overline{z}_{j},$
\end{center}
from Theorem \ref{main2} we obtain a Sasaki-Einstein metric on $\mathscr{V}_{2}(\mathbb{R}^{6})$ given by

\begin{center}

$g_{\mathscr{V}_{2}(\mathbb{R}^{6})} = \widetilde{g}_{{\rm{Gr}}(2,\mathbb{C}^{4})} + \displaystyle \frac{16}{25} \eta \otimes \eta$,

\end{center}
where the basic metric $\widetilde{g}_{{\rm{Gr}}(2,\mathbb{C}^{4})} = \frac{2}{5}d\eta ({\rm{id}} \otimes \phi)$ is given by

\begin{center}

 $ \widetilde{g}_{{\rm{Gr}}(2,\mathbb{C}^{4})} = \displaystyle  \frac{2}{5}\sum_{i,j= 1}^{4}\frac{\partial^{2}}{\partial z_{i} \partial \overline{z}_{j}} \log \Big ( 1 + \sum_{k = 1}^{4}|z_{k}|^{2} + \bigg |\det \begin{pmatrix}
 z_{1} & z_{3} \\
 z_{2} & z_{4}
\end{pmatrix} \bigg |^{2} \Big){\text{Re}}(dz_{i} \otimes d\overline{z}_{j})$.

\end{center}
Therefore, we have a Sasaki-Einstein structure 
\begin{center}
$(g_{\mathscr{V}_{2}(\mathbb{R}^{6})}, \overline{\eta} = \frac{4}{5} \eta, \xi = \frac{5}{4}\frac{\partial}{\partial \theta}, \phi),$ 
\end{center}
on $\mathscr{V}_{2}(\mathbb{R}^{6})$ completely determined by the contact structure $\eta$.

\end{example}

The last results tell us that the cone $\mathscr{C}(M)$ of a compact homogeneous contact manifold $(M,\eta,G)$ is a K\"{a}hler manifold, see \ref{sasakikahler}. Moreover, a straightforward computation shows that the K\"{a}hler form $\omega_{\mathscr{C}} = g_{\mathscr{C}}(J_{\mathscr{C}} \otimes {\text{id}})$ is given by
\begin{equation}
\label{Eqconekahler}
\omega_{\mathscr{C}} = \frac{1}{2}d \big ( r^{2} \overline{\eta} \big ) = r dr \wedge \overline{\eta} + \frac{r^{2}}{2}d \overline{\eta},
\end{equation}
we shall denote $ \Phi = r^{2} \overline{\eta} \in \Omega^{1}(\mathscr{C}(M))$. It is worthwhile to observe that 
\begin{equation}
g_{\mathscr{C}} = dr \otimes dr + \frac{r^{2}}{2} d \overline{\eta} ({\rm{id}} \otimes \phi) + r^{2}\overline{\eta} \otimes \overline{\eta}.     
\end{equation}
Before we prove our next theorem, we consider the following well-known result, see for instance \cite{BOYERGALICKI}.

\begin{proposition}
\label{CYCONE}
Let $M$ be a Sasaki manifold with ${\text{K}}$-contact structure $(g_{M},\phi,\xi,\eta)$, then $g_{M}$ is Sasaki-Einstein if and only if the cone metric $g_{\mathscr{C}}$ is Ricci-flat, i.e., if and only if $(\mathscr{C}(M),g_{\mathscr{C}})$ is Calabi-Yau.
\end{proposition}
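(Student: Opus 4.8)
The plan is to separate the K\"{a}hler content, which is automatic here, from the curvature content, which is the actual equivalence. Since $M$ is assumed to be Sasaki, Definition \ref{sasakikahler} already gives that $(\mathscr{C}(M), g_{\mathscr{C}}, J_{\mathscr{C}})$ is K\"{a}hler; consequently ``Calabi--Yau'' (K\"{a}hler and Ricci-flat) is equivalent to $g_{\mathscr{C}}$ being Ricci-flat, and no further integrability needs to be verified. Thus the whole statement reduces to showing that $g_{\mathscr{C}}$ is Ricci-flat if and only if $(M, g_{M})$ is Sasaki--Einstein.

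First I would exploit that $g_{\mathscr{C}} = dr \otimes dr + r^{2} g_{M}$ is a warped product over the flat one-dimensional base $\mathbb{R}^{+}$ with warping function $f(r) = r$. Writing $\partial_{r} = \frac{d}{dr}$ for the unit radial field and $X, Y$ for fields tangent to the $M$-factor, I would first record the Levi--Civita connection of the cone, namely $\nabla_{\partial_{r}} \partial_{r} = 0$, $\nabla_{\partial_{r}} X = \nabla_{X}\partial_{r} = \frac{1}{r} X$, and $\nabla_{X} Y = \nabla^{M}_{X} Y - r\, g_{M}(X, Y) \partial_{r}$, and then contract the resulting curvature tensor. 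The expected output is the classical cone identity
\begin{equation}
\text{Ric}_{g_{\mathscr{C}}}(\partial_{r}, \cdot) = 0, \qquad \text{Ric}_{g_{\mathscr{C}}}(X, Y) = \text{Ric}_{g_{M}}(X, Y) - 2n\, g_{M}(X, Y),
\end{equation}
where $\dim M = 2n+1$, so that the fibre dimension contributes the factor $\dim M - 1 = 2n$; equivalently this may be quoted directly from O'Neill's warped-product formulas.

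From this identity, $g_{\mathscr{C}}$ is Ricci-flat precisely when $\text{Ric}_{g_{M}} = 2n\, g_{M}$. To match this with the Sasaki--Einstein condition I would invoke Remark \ref{riccitensorsasaki}: every Sasaki manifold satisfies $\text{Ric}_{M}(X, \xi) = 2n\, \eta(X)$, so if $g_{M}$ is Einstein then evaluating on $\xi$ forces the Einstein constant to equal $2n$; hence $\text{Ric}_{g_{M}} = 2n\, g_{M}$ is exactly the Sasaki--Einstein condition recorded there. Combining with the reduction of the first paragraph yields the equivalence. The main obstacle is the curvature computation itself: carefully checking that the radial and mixed Ricci components vanish and isolating the dimensional constant $2n$, for which the sanity check given by the cone over the round sphere $S^{m} \subset \mathbb{R}^{m+1}$ (a flat cone, with $\text{Ric}_{S^{m}} = (m-1) g$) is a useful guide.
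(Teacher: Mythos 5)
Your proof is correct. The paper does not actually prove this proposition---it quotes it as a well-known result and points to \cite{BOYERGALICKI}---and your argument (the cone is K\"ahler automatically by the Sasaki hypothesis, the warped-product Ricci identity $\mathrm{Ric}_{g_{\mathscr{C}}}(X,Y)=\mathrm{Ric}_{g_{M}}(X,Y)-2n\,g_{M}(X,Y)$ with vanishing radial components reduces Ricci-flatness of the cone to $\mathrm{Ric}_{g_{M}}=2n\,g_{M}$, and the identity $\mathrm{Ric}_{M}(\cdot,\xi)=2n\,\eta$ from Remark \ref{riccitensorsasaki} pins the Einstein constant to $2n$) is precisely the standard proof behind that citation.
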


\begin{remark}
Notice that in the last proposition for the case that $g_{\mathscr{C}}$ is Calabi-Yau we have ${\rm{Hol}}^{0}(g_{\mathscr{C}}) \subset {\rm{SU}}(n+1)$, where ${\rm{Hol}}^{0}(g_{\mathscr{C}})$ denotes the restricted holonomy group.

\end{remark}

\begin{theorem}
\label{main3}
Let $(M,\eta,G)$ be a compact homogeneous contact manifold such that $M = \mathcal{Q}_{P} / \mathbb{Z}_{\ell}$, for some parabolic Lie subgroup $P \subset G^{\mathbb{C}}$. Then, the cone $\mathscr{C}(M)$ admits a Calabi-Yau metric $\omega_{\mathscr{C}} = \frac{1}{2}d\Phi$ such that 

\begin{equation}
\Phi = \displaystyle - \frac{r^{2} \sqrt{-1}}{2(n+1)}\big ( \partial - \overline{\partial} \big )\log \big | \big |s_{U}v_{\delta_{P}}^{+} \big| \big |^{2} + \frac{r^{2}I(X_{P})}{\ell(n+1)}d\theta_{U},    
\end{equation}
for some local section $s_{U} \colon U \subset X_{P} \to G^{\mathbb{C}}$, where $v_{\delta_{P}}^{+}$ denotes the highest weight vector of weight $\delta_{P}$ associated to the irreducible $\mathfrak{g}^{\mathbb{C}}$-module $V(\delta_{P})$.
\end{theorem}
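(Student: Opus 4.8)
The plan is to obtain the statement as an essentially immediate consequence of the structural results already established: the substantive geometric input---integrability of the cone complex structure and Ricci-flatness of the cone metric---is exactly what Theorem \ref{main2} and Proposition \ref{CYCONE} supply, and the remaining work is a direct local computation of the potential $\Phi$.

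First I would invoke Theorem \ref{main2}: the compact homogeneous contact manifold $M = \mathcal{Q}_{P}/\mathbb{Z}_{\ell}$ carries a homogeneous Sasaki-Einstein structure $(g_{M},\phi,\xi,\overline{\eta})$ with
\[
\overline{\eta} = \frac{I(X_{P})}{\ell(n+1)}\eta,
\]
where $\eta$ is the contact form of Theorem \ref{main1}. By Definition \ref{sasakikahler} the Sasaki condition says precisely that the cone $(\mathscr{C}(M),g_{\mathscr{C}},J_{\mathscr{C}})$ is K\"{a}hler, with K\"{a}hler form $\omega_{\mathscr{C}} = g_{\mathscr{C}}(J_{\mathscr{C}} \otimes {\rm{id}})$. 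Proposition \ref{CYCONE} then upgrades the Einstein condition on $g_{M}$ to Ricci-flatness of $g_{\mathscr{C}}$, so that $(\mathscr{C}(M),g_{\mathscr{C}})$ is Calabi-Yau and $\omega_{\mathscr{C}}$ is a K\"{a}hler Ricci-flat form on the cone.

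Next I would use the expression \ref{Eqconekahler} for the cone K\"{a}hler form, namely
\[
\omega_{\mathscr{C}} = \tfrac{1}{2}d\big(r^{2}\overline{\eta}\big) = \tfrac{1}{2}d\Phi, \qquad \Phi = r^{2}\overline{\eta},
\]
so it only remains to write $\Phi$ in closed form. Substituting the local expression for $\eta$ from Theorem \ref{main1} into $\overline{\eta} = \frac{I(X_{P})}{\ell(n+1)}\eta$, the factor $\ell$ in the numerator of $\eta$ cancels against the $\ell$ in the denominator of the rescaling and the $I(X_{P})$ cancels in the first term, giving
\[
\overline{\eta} = -\frac{\sqrt{-1}}{2(n+1)}\big(\partial-\overline{\partial}\big)\log||s_{U}v_{\delta_{P}}^{+}||^{2} + \frac{I(X_{P})}{\ell(n+1)}d\theta_{U}.
\]
Multiplying through by $r^{2}$ yields exactly the claimed formula for $\Phi$.

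Since every step is either a quotation of an already-proven statement or an elementary algebraic manipulation, there is no genuine obstacle here; the theorem is in effect a corollary of Theorem \ref{main2} and Proposition \ref{CYCONE}. The only points demanding care are bookkeeping ones: tracking the rescaling constant $\frac{I(X_{P})}{\ell(n+1)}$ so that the coefficients of both terms of $\Phi$ come out correctly, and confirming that it is $\Phi = r^{2}\overline{\eta}$ (rather than $r^{2}\eta$) whose exterior derivative is twice the Ricci-flat K\"{a}hler form, which is forced by the normalization $\overline{\eta} = g_{M}(\xi,\cdot)$ underlying \ref{Eqconekahler}.
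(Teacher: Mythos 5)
Your proposal is correct and follows essentially the same route as the paper's own proof: invoke Theorem \ref{main2} for the Sasaki--Einstein structure, apply Proposition \ref{CYCONE} to conclude the cone metric is Ricci-flat K\"{a}hler, and read off $\Phi = r^{2}\overline{\eta}$ from Equation \ref{Eqconekahler}, substituting the local expression of $\eta$ from Theorem \ref{main1}. The coefficient bookkeeping in your final substitution is accurate.
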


\begin{proof}
The proof follows from the following facts: From Theorem \ref{main2}, we obtain a Sasaki-Einstein structure on $M = \mathcal{Q}_{P} / \mathbb{Z}_{\ell}$ defined by

\begin{center}

$(g_{M}, \phi,\xi = \frac{\ell(n+1)}{I(X_{P})}\frac{\partial}{\partial \theta}, \frac{I(X_{P})}{\ell(n+1)}\eta )$.

\end{center}
Thus, from Proposition \ref{CYCONE} we have that the cone $\mathscr{C}(M)$ is a K\"{a}hler Ricci-flat manifold, i.e., it defines a Calabi-Yau manifold. Now, we notice that from Equation \ref{Eqconekahler} it follows that 

\begin{center}

$\Phi = \displaystyle \frac{r^{2}I(X_{P})}{\ell(n+1)} \eta ,$

\end{center}
defines a Calabi-Yau metric $\omega_{\mathscr{C}} = \frac{1}{2}d\Phi$ on $M = \mathcal{Q}_{P} / \mathbb{Z}_{\ell}$.
\end{proof}

\begin{remark}
Notice that in the result above if $M = \mathcal{Q}_{P}$, then we have $\mathscr{C}(\mathcal{Q}_{P})$ simply connected, it follows that ${\rm{Hol}}(g_{\mathscr{C}}) = {\rm{Hol}}^{0}(g_{\mathscr{C}}) \subset {\rm{SU}}(n+1)$. Hence, in this case we have that $\mathscr{C}(\mathcal{Q}_{P})$ admits a spin structure, e.g. \cite[Example 1, p. 84]{BAUM}, \cite{BOYERGALICKI}.
\end{remark}

\begin{remark}
It is worthwhile to observe that we can compute the global K\"{a}hler potential for the Calabi-Yau metric $\omega_{\mathscr{C}} = \frac{1}{2}d\Phi$ provided by Theorem \ref{main3}. In fact, if we consider $L = K_{X_{P}}^{\otimes \frac{\ell}{I(X_{P})}}$, we can take a Hermitian structure $H$ on $L$ such that 
\begin{center}

$H((gP,w),(gP,v)) = w \overline{v}  \big | \big | s_{U}(gP)v_{\delta_{P}}^{+} \big | \big |^{\frac{2 \ell}{I(X_{P})}},$

\end{center}
$\forall (gP,w),(gP,v) \in L|_{U}$ and $s_{U} \colon U \subset X_{P} \to G^{\mathbb{C}}$ (local section). Now, we consider the smooth function defined by $\frac{1}{2}r^{2} \colon L^{\times} \to \mathbb{R}^{+}$, such that $\frac{1}{2}r^{2} = H$. It is straightforward to verify that (locally)

\begin{center}

$\displaystyle \frac{1}{2}r^{2} = b_{U}\overline{b}_{U} \big | \big | s_{U}(z_{U})v_{\delta_{P}}^{+} \big | \big |^{\frac{2 \ell}{I(X_{P})}} = b_{U}\overline{b}_{U}{\mathrm{e}}^{\varphi_{U}(z_{U})},$

\end{center}
where $(z_{U},b_{U}) \in L|_{U}$ are local coordinates, and

\begin{center}

$\varphi_{U} = \displaystyle \frac{\ell}{I(X_{P})}\log \Big (  \big | \big | s_{U}v_{\delta_{P}}^{+} \big | \big |^{2}\Big )$.

\end{center}
From the facts above we notice that $b_{U} = \frac{r}{\sqrt{2}}{\mathrm{e}}^{-\frac{1}{2}\varphi_{U}(z_{U}) + \sqrt{-1}\theta_{U}}$ and we can verify that 
\begin{equation}
\displaystyle \frac{\sqrt{-1}}{2} \partial \overline{\partial} r^{2} = \frac{\ell (n+1)}{I(X_{P})} \omega_{\mathscr{C}}.
\end{equation}
Thus, we obtain a globally defined K\"{a}hler potential $\frac{I(X_{P})r^{2}}{2\ell(n+1)}$ for $\omega_{\mathscr{C}}$. Here we have used the identification $L^{\times} = \mathscr{C}(M)$, for $M = \mathcal{Q}_{P} / \mathbb{Z}_{\ell}$.
\end{remark}

\begin{remark}
Notice that in the general case when $M = Q(L)$, for some ample line bundle $L^{-1} \in \text{Pic}(X_{P})$, we can use the Sasaki structure obtained in \ref{sasakiample} to get an explicit K\"{a}hler structure on $\mathscr{C}(M)$ described in terms of Lie theory.
\end{remark}

In the next section we shall explore the application of the last theorem in the study of crepant resolutions of Calabi-Yau cones over homogeneous Sasaki-Einstein manifolds. Thus, explicit examples which illustrate Theorem \ref{main3} will be given in the next section.

\section{Applications in Crepant resolutions of Calabi-Yau cones}
\label{sec5}

This section is devoted to provide a concrete application of the results developed in the previous sections in the study of crepant resolutions of Calabi-Yau cones.

In Subsection \ref{subsec5.1}, we shall discuss how the Cartan-Remmert reduction of canonical bundles of K\"{a}hler-Einstein Fano manifolds can be used to produce concrete examples for the conjecture introduced in \cite{ADSCFT}. The main goal is to prove Theorem \ref{Theo4}.

In Subsection \ref{subsec5.2}, we provide several concrete examples of resolutions of Calabi-Yau cones, many of these concrete examples are new in the literature. Our references for this section are  \cite{COLON}, \cite{RESOLUTIONCOMPSUP}, \cite{GOTO}, \cite{CALABIANSATZ}, \cite{GRAUERT}, \cite{LAUFER}.

\subsection{Crepant resolution of Calabi-Yau cones and Calabi Ansatz}
\label{subsec5.1}
As we have seen so far, for every compact homogeneous contact manifold $(M,\eta,G)$, such that ($\mathscr{C}(M),\omega_{\mathscr{C}})$ is K\"{a}hler Ricci-flat, we have that $M = \mathcal{Q}_{P}/\mathbb{Z}_{\ell}$, for some $\ell \in \mathbb{Z}_{>0}$. Therefore, we obtain

\begin{center}
    
$M \in \mathscr{P}(X_{P},{\rm{U}}(1))  \longleftrightarrow L(M) \in {\text{Pic}}(X_{P}),$

\end{center}
such that 
\begin{center}

$c_{1}(L(M)) = - \displaystyle \frac{\ell}{I(X_{P})}c_{1}(X_{P}),$
    
\end{center}
notice that $L(M) = K_{X_{P}}^{\otimes \frac{\ell}{I(X_{P})}}$. From this, we can take a Hermitian structure $H$ on $L(M)$ and define $\rho \colon L(M) \to \mathbb{R}_{\geq 0}$ such that
\begin{center}
$\rho(u) = \sqrt{H(u,u)},$ 
\end{center}
$\forall u \in L(M)$. Since $L(M)$ is a negative line bundle, the function $\rho$ is strictly plurisubharmonic away from the zero section $X_{P} \subset L(M)$ (cf. \cite[p. 341]{NEGATIVELINEBUNDLE}), thus $L(M)$ can be exhausted by strictly pseudo-convex domains

\begin{center}
    
$D_{\epsilon} = \Big \{ u \in L(M) \ \ \Big | \ \ \rho(u) < \epsilon \Big\}$,
    
\end{center}
$\forall \epsilon > 0$. It follows that $L(M)$ is holomorphically convex, and we have the Cartan-Remmert reduction \cite{GRAUERT}. Namely,  we have a Stein space $Y$ and a holomorphic map $\mathscr{R} \colon L(M) \to Y$ which contracts (blows down) the maximal compact analytic subset $X_{P} \subset L(M)$ to a point and defines a biholomorphism outside $X_{P}$. 

If we denote

\begin{center}
    $L(M)^{\times} = L(M) \backslash X_{P},$
\end{center}
it follows from the identification $L(M)^{\times} \cong \mathscr{C}(M)$ that

\begin{center}

$Y = \mathscr{C}(M) \cup \{o\},$    
    
\end{center}
where $\{o\} = \mathscr{R}(X_{P}) \subset Y$. Here the manifold $M$ can be identified with the level set $\{r = 1\} \subset Y$ (link).

\begin{remark}
The space $Y = \mathscr{C}(M) \cup \{o\}$ is a normal complex space, and by the Riemann extension theorem, we have $\iota_{\ast} \mathscr{O}_{\mathscr{C}(M)} = \mathscr{O}_{Y}$, where the map $\iota \colon \mathscr{C}(M) \hookrightarrow Y$ denotes the natural inclusion. 
\end{remark}

\begin{figure}[H]
\centering
\includegraphics[scale=0.4]{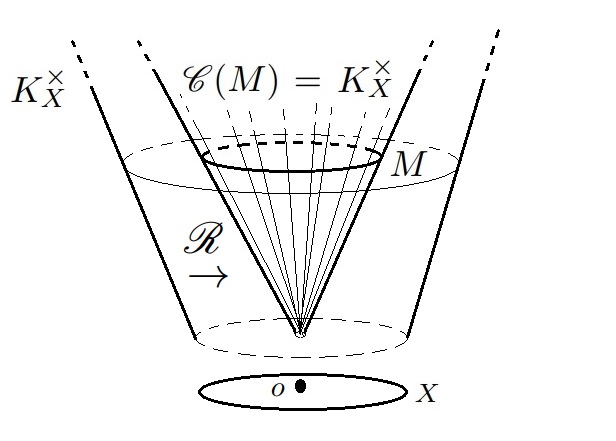}
\caption{Cartan-Remmert reduction of the canonical bundle associated to a K\"{a}hler-Einstein Fano manifold.}
\end{figure}

Since the Calabi-Yau metric provided by Theorem \ref{main3} is singular at the apex $o \in Y$ (conical singularity), this leads us to the following general conjecture.

\begin{conjecture}[\cite{ADSCFT}]
\label{conj}
Let $f \colon \widetilde{Y} \to Y$ be a crepant resolution of an isolated singularity $Y = \mathscr{C}(M) \cup \{o\}$, where $\mathscr{C}(M)$ is the cone over a Sasaki-Einstein manifold $(M,g_{M})$. Then, $\widetilde{Y}$ admits a unique Ricci-flat K\"{a}hler metric in each K\"{a}hler class in $H^{2}(\widetilde{Y},\mathbb{R})$ which is asymptotic to the cone metric on the cone over the Sasaki-Einstein manifold $(M,g_{M})$. 
\end{conjecture}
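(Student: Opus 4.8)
The plan is to reduce the conjecture, in the generality treated in this paper, to the solvability of a complex Monge-Amp\`ere equation on the crepant resolution produced by the Cartan-Remmert reduction, supplemented by a maximum-principle argument for uniqueness; the explicit Calabi Ansatz then singles out a distinguished solution in one K\"ahler class. First I would fix the resolution. For $M = \mathcal{Q}_{P}/\mathbb{Z}_{I(X_{P})} = Q(K_{X_{P}})$ the Cartan-Remmert reduction $\mathscr{R}\colon K_{X_{P}} \to Y = \mathscr{C}(M)\cup\{o\}$ contracts the zero section $X_{P}\subset K_{X_{P}}$ to the apex and is a biholomorphism over $\mathscr{C}(M)\cong K_{X_{P}}^{\times}$, so $\widetilde{Y}$ is identified with the total space of $K_{X_{P}}$. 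That this resolution is crepant is the key structural input: the total space of the canonical bundle carries a tautological nowhere-vanishing holomorphic $(n+1)$-form which restricts on $K_{X_{P}}^{\times}$ to the holomorphic volume form of the Calabi-Yau cone, so the discrepancy vanishes and $K_{\widetilde{Y}} = \mathscr{R}^{\ast}K_{Y}$.

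Next, given a K\"ahler class in $H^{2}(\widetilde{Y},\mathbb{R})$ I would construct a reference K\"ahler metric $\omega_{0}$ that coincides with the cone metric $\omega_{\mathscr{C}}$ of Theorem \ref{main3} outside a compact set. When the class lies in the compactly supported cohomology the explicit Calabi Ansatz \ref{ansatzcompact} already delivers such a metric: using ${\text{Ric}}(\omega_{X_{P}}) = 2\pi\omega_{X_{P}}$ one verifies directly that $\omega_{CY}$ is complete and Ricci-flat, and that it is asymptotic to $\omega_{\mathscr{C}}$ at infinity after the natural change of radial variable relating the fibre norm to the cone coordinate. For a general class, relevant when $b_{2}(X_{P})>1$, one instead glues a smooth K\"ahler representative on a compact core to the cone metric on the asymptotic end.

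Writing the desired metric as $\omega = \omega_{0}+\sqrt{-1}\partial\overline{\partial}u$ reduces Ricci-flatness to the Monge-Amp\`ere equation $(\omega_{0}+\sqrt{-1}\partial\overline{\partial}u)^{n+1} = \mathrm{e}^{f}\omega_{0}^{n+1}$, where $\sqrt{-1}\partial\overline{\partial}f$ is the Ricci form of $\omega_{0}$ and $f$ decays at the conical rate fixed above. Existence of a solution $u$ with matching decay is the asymptotically conical Calabi-Yau theorem, for which I would invoke \cite[Theorem 5.1]{GOTO} (cf. \cite{COLON}, \cite{RESOLUTIONCOMPSUP}); uniqueness within the class follows by subtracting the Monge-Amp\`ere equations for two solutions to obtain a second-order linear elliptic equation with no zeroth-order term satisfied by the difference of potentials, which decays at infinity and is therefore constant by the maximum principle.

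The main obstacle is precisely the analytic core of this last step: the a priori $C^{0}$ and higher-order weighted estimates for the Monge-Amp\`ere equation on the non-compact total space, which block any direct appeal to Yau's compact theorem and must be carried out in H\"older spaces adapted to the cone at infinity. This is the content delegated to \cite{GOTO}; the present paper's contribution is then to render the resolution $\mathscr{R}$, the conical metric $\omega_{\mathscr{C}}$, and the Calabi Ansatz solution \ref{ansatzcompact} completely explicit in Lie-theoretic terms for every homogeneous Sasaki-Einstein link $M = Q(K_{X_{P}})$, thereby exhibiting concrete realizations of the existence part of the conjecture.
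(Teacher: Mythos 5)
You should first note that the statement you are proving is presented in the paper as Conjecture \ref{conj}, attributed to \cite{ADSCFT}: the paper does not prove it, and explicitly positions its own contribution (Theorem \ref{calabiresolution}) as a supply of concrete examples illustrating the \emph{existence} part in the homogeneous setting. Your proposal, restricted to $M = Q(K_{X_{P}})$ and to the particular resolution furnished by the Cartan--Remmert reduction, is in substance the same chain of reductions the paper performs: identify $\widetilde{Y}$ with the total space of $K_{X_{P}}$, exhibit the Calabi Ansatz metric \ref{ansatzcompact} in the compactly supported class (cf. Remark \ref{supportcalabi}), and delegate existence in the remaining classes to Theorem \ref{Gototheorem} and Theorem \ref{resolutionvan}. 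To that extent your outline is consistent with what the paper actually establishes.

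As a proof of the conjecture as stated, however, it has genuine gaps. First, the conjecture quantifies over \emph{all} crepant resolutions $f\colon \widetilde{Y}\to Y$ of \emph{all} Sasaki--Einstein cones; your argument only treats the homogeneous links of the paper and only the resolution $\mathscr{R}\colon K_{X_{P}}\to Y$ (which, by Proposition \ref{remmert}, is the only crepant one available from the line-bundle construction), so at best you recover Theorem \ref{calabiresolution}, not the conjecture. Second, the uniqueness step is not a proof. Subtracting the two Monge--Amp\`ere equations does give a linear elliptic equation without zeroth-order term for $v = u_{1}-u_{2}$ with respect to an intermediate metric, but to apply a maximum principle on the non-compact space you must first know that $v$ is globally defined, bounded, and decays at infinity; producing such a potential from the mere hypothesis that two Ricci-flat metrics lie in the same class in $H^{2}(\widetilde{Y},\mathbb{R})$ and are both asymptotic to $\omega_{\mathscr{C}}$ requires a $\partial\overline{\partial}$-lemma with decay on an asymptotically conical manifold together with an expansion of the K\"ahler potential at infinity --- this is precisely the analytic content of \cite{COLON} and is not supplied by \cite{GOTO} as quoted in Theorem \ref{Gototheorem}, nor by your sketch. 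Third, for K\"ahler classes outside $H^{2}_{c}(\widetilde{Y},\mathbb{R})$ (which occur exactly when $b_{2}(X_{P})>1$, see Remark \ref{GOTOREMARK}), the statement of Theorem \ref{Gototheorem} used in the paper asserts existence of a complete Ricci-flat K\"ahler metric but does not itself assert the asymptotic convergence to the cone metric demanded by the conjecture, so that part of the claim is also left unestablished by your citations.
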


We recall that a resolution of singularities $f \colon \widetilde{Y} \to Y$ is called crepant if $f^{\ast}K_{Y} = K_{\widetilde{Y}}$, e.g. \cite{RESOLUTIONCOMPSUP}. We have the following important partial solution for \ref{conj}.

\begin{theorem}[\cite{RESOLUTIONCOMPSUP}]
\label{resolutionvan}
Let $f \colon \widetilde{Y} \to Y$ be a crepant resolution of an isolated singularity $Y = \mathscr{C}(M) \cup \{o\}$, where $\mathscr{C}(M)$ admits a Ricci-flat cone metric $\omega_{\mathscr{C}}$. Then, for each cohomology class $b \in H_{c}^{2}(\widetilde{Y},\mathbb{R})$ (cohomology with compact supports) there is a complete Ricci-flat K\"{a}hler metric $\omega_{CY}$ with $[\omega_{CY}] = b$ and $\omega_{CY}$ is asymptotic to the cone metric $\omega_{\mathscr{C}}$ for any order of derivatives.
\end{theorem}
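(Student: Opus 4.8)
The plan is to realize $\omega_{CY}$ as the solution of a complex Monge-Amp\`ere equation on the non-compact resolution $\widetilde{Y}$, treating the problem as an asymptotically conical (AC) Calabi-Yau construction. First I would exploit the crepant hypothesis $f^{\ast}K_{Y} = K_{\widetilde{Y}}$: since $Y = \mathscr{C}(M) \cup \{o\}$ is a Calabi-Yau cone, the parallel holomorphic volume form on $\mathscr{C}(M)$ pulls back under the biholomorphism $\mathscr{R} \colon \widetilde{Y} \setminus E \to \mathscr{C}(M)$ (here $E = \mathscr{R}^{-1}(o)$ is the exceptional set), and the vanishing of the discrepancy guarantees it extends across $E$ as a nowhere-vanishing holomorphic $(n+1,0)$-form $\Omega$ on all of $\widetilde{Y}$. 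This furnishes the target volume form, and Ricci-flatness of a K\"{a}hler metric $\omega$ on $\widetilde{Y}$ becomes equivalent to the equation $\omega^{n+1} = c_{n}\, \Omega \wedge \overline{\Omega}$ for a suitable constant $c_{n}$.

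Next I would fix a reference metric. Using that $b \in H_{c}^{2}(\widetilde{Y},\mathbb{R})$ is \emph{compactly supported}, I would construct a K\"{a}hler form $\omega_{0}$ with $[\omega_{0}] = b$ which coincides with the pullback cone metric $\mathscr{R}^{\ast}\omega_{\mathscr{C}}$ outside a compact neighbourhood of $E$; the compact support is precisely what forces $\omega_{0}$, and hence the final solution, to be asymptotic to $\omega_{\mathscr{C}}$ at the conical end. Comparing the reference volume form with the Ricci-flat target, I would set $F := \log\big(c_{n}\, \Omega \wedge \overline{\Omega} / \omega_{0}^{n+1}\big)$ and check that $F$, together with all its derivatives, decays at the end, since $\omega_{0}$ agrees there with the cone metric and the latter is itself Ricci-flat. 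The problem then reduces to solving $(\omega_{0} + \sqrt{-1}\partial \overline{\partial}\varphi)^{n+1} = e^{F}\omega_{0}^{n+1}$ for a function $\varphi \to 0$ at infinity with $\omega_{0} + \sqrt{-1}\partial \overline{\partial}\varphi > 0$.

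The analytic heart of the argument is to solve this Monge-Amp\`ere equation on the complete AC manifold $(\widetilde{Y},\omega_{0})$. I would work in weighted H\"{o}lder spaces adapted to the radial variable $r$ of the cone and run a continuity method along the family $tF$, $t \in [0,1]$. Openness comes from the linearized operator, the Laplacian $\Delta_{\omega_{0}}$, which I would show is an isomorphism between the appropriate weighted spaces by computing the indicial roots of the cone Laplacian on the link $M$ and selecting a non-critical decay rate avoiding them. Closedness requires the full suite of a priori estimates in the style of Yau, adapted to the non-compact setting: a weighted $C^{0}$ bound obtained via a Moser-iteration/maximum-principle argument exploiting completeness and the decay of $F$, followed by the second-order estimate and Schauder together with Evans-Krylov to close the $C^{2,\alpha}$ loop, all carried out with weights that control the behaviour at the end.

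Finally, to upgrade to the assertion that $\omega_{CY}$ is asymptotic to $\omega_{\mathscr{C}}$ \emph{to all orders of derivatives}, I would bootstrap the decay of the solution: expanding $\varphi$ in eigenfunctions on the link and matching against the indicial roots of $\Delta_{\omega_{0}}$, each application of elliptic regularity improves the rate up to the next critical exponent, yielding polyhomogeneous asymptotics faster than any polynomial. The main obstacle is this analytic step, specifically the weighted $C^{0}$ estimate and the Fredholm/isomorphism theory for $\Delta_{\omega_{0}}$ on a complete non-compact space, where Yau's compact maximum-principle argument is unavailable and one must instead govern the behaviour at the conical end through the spectrum of the link and a careful choice of admissible weights.
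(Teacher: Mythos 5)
The paper offers no proof of this statement: it is quoted verbatim from \cite{RESOLUTIONCOMPSUP}, and your outline is essentially the argument given there (and in the closely related work of Conlon--Hein) --- extension of the holomorphic volume form via crepancy, a reference AC metric in the class $b$ agreeing with the cone metric at infinity, reduction to a complex Monge--Amp\`ere equation with rapidly decaying right-hand side, and a continuity method in weighted H\"older spaces governed by the indicial roots of the link Laplacian. The only caveat worth recording is that $b$ must actually be a K\"ahler class (in particular positive on the exceptional set) for your gluing of $\mathscr{R}^{\ast}\omega_{\mathscr{C}}$ with a compactly supported representative to yield a positive reference form $\omega_{0}$; this hypothesis is implicit in the cited source and your sketch, though omitted from the statement as reproduced here.
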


\begin{remark}
\label{conevariety}
It is worth pointing out that in the context of Cartan-Remmert reduction $\mathscr{R} \colon L(M) \to Y = \mathscr{C}(M) \cup \{o\}$, we can show that $Y = \mathscr{C}(M) \cup \{o\}$ is in fact an affine variety, see for instance \cite{VERBITSKY} and \cite{AFFINECONE}.

\end{remark}

The result above together with Theorem \ref{main3} provide the following description for Asymptotically conical (AC) Calabi-Yau manifolds obtained from crepant resolutions of Riemannian cones over homogeneous Sasaki-Einstein manifolds.

\begin{proposition}
\label{resolutionhomogenous}
Let $f \colon \widetilde{Y} \to Y$ be a crepant resolution of $Y = \mathscr{C}(\mathcal{Q}_{P} / \mathbb{Z}_{\ell}) \cup \{o\}$, for some parabolic Lie subgroup $P \subset G^{\mathbb{C}}$.  Then, for each cohomology class $b \in H_{c}^{2}(\widetilde{Y},\mathbb{R})$ there is a complete Ricci-flat K\"{a}hler metric $\omega_{CY}$ with $[\omega_{CY}] = b$ and $\omega_{CY}$ is asymptotic to the cone metric $\omega_{\mathscr{C}} = \frac{1}{2}d\Phi$ (for any order of derivatives) such that 
\begin{center}
$\Phi = \displaystyle - \frac{r^{2} \sqrt{-1}}{2(n+1)}\big ( \partial - \overline{\partial} \big )\log \big | \big |s_{U}v_{\delta_{P}}^{+} \big| \big |^{2} + \frac{r^{2}I(X_{P})}{\ell(n+1)}d\theta_{U},$
\end{center}
for some local section $s_{U} \colon U \subset X_{P} \to G^{\mathbb{C}}$, where $v_{\delta_{P}}^{+}$ denotes the highest weight vector of weight $\delta_{P}$ associated to the irreducible $\mathfrak{g}^{\mathbb{C}}$-module $V(\delta_{P})$.
\end{proposition}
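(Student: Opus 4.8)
The plan is to obtain the statement as a direct synthesis of Theorem \ref{main3} and Theorem \ref{resolutionvan}, so that no new analysis is required beyond matching hypotheses. First I would invoke Theorem \ref{main3} applied to the compact homogeneous contact manifold $M = \mathcal{Q}_{P}/\mathbb{Z}_{\ell}$: this furnishes the Calabi-Yau (Ricci-flat K\"{a}hler) cone metric $\omega_{\mathscr{C}} = \frac{1}{2}d\Phi$ on $\mathscr{C}(M)$, together with the explicit global primitive $\Phi$ displayed in the statement. Thus the expression for $\Phi$ is inherited verbatim from Theorem \ref{main3}, and in particular $\mathscr{C}(M)$ carries a genuine Ricci-flat cone metric.

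Next I would verify that $Y = \mathscr{C}(M)\cup\{o\}$ is an isolated singularity, which is the only hypothesis of Theorem \ref{resolutionvan} not already in hand. This follows from the Cartan-Remmert reduction $\mathscr{R}\colon L(M) \to Y$ discussed above: since $L(M) = K_{X_{P}}^{\otimes \frac{\ell}{I(X_{P})}}$ is a negative line bundle, the map $\mathscr{R}$ contracts the maximal compact analytic subset $X_{P} \subset L(M)$ to the single apex $o$ and restricts to a biholomorphism on $L(M)^{\times} \cong \mathscr{C}(M)$. Hence $Y$ is a normal complex space --- indeed an affine variety by Remark \ref{conevariety} --- whose unique singular point is $o$, so any crepant resolution $f\colon \widetilde{Y}\to Y$ resolves an isolated singularity.

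With these two observations the hypotheses of Theorem \ref{resolutionvan} are met: $f\colon\widetilde{Y}\to Y$ is a crepant resolution of the isolated singularity $Y=\mathscr{C}(M)\cup\{o\}$, and $\mathscr{C}(M)$ admits the Ricci-flat cone metric $\omega_{\mathscr{C}}$ produced in the first step. Applying Theorem \ref{resolutionvan} then yields, for each class $b \in H_{c}^{2}(\widetilde{Y},\mathbb{R})$, a complete Ricci-flat K\"{a}hler metric $\omega_{CY}$ with $[\omega_{CY}] = b$ and asymptotic to $\omega_{\mathscr{C}}$ to all orders, which is exactly the claim. Since the deep existence and asymptotics result of \cite{RESOLUTIONCOMPSUP} is used as a black box, there is no genuine analytic obstacle here; the only point demanding care is the compatibility check --- that the cone metric supplied by Theorem \ref{main3} is precisely the Ricci-flat cone metric $\omega_{\mathscr{C}}$ required as input to Theorem \ref{resolutionvan}, and that the normalization of $\Phi$ carried through the Sasaki-Einstein rescaling in \eqref{newkahler} matches the displayed formula.
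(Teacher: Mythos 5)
Your proposal is correct and follows exactly the route the paper takes: the proposition is stated there as the immediate combination of Theorem \ref{resolutionvan} (the existence and asymptotics result of \cite{RESOLUTIONCOMPSUP}) with the explicit Ricci-flat cone metric $\omega_{\mathscr{C}} = \frac{1}{2}d\Phi$ supplied by Theorem \ref{main3}. Your additional check that $o$ is the unique singular point of $Y$ via the Cartan-Remmert reduction is a sensible verification of the isolated-singularity hypothesis that the paper leaves implicit.
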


Notice that Theorem \ref{resolutionvan} ensures that there is a Ricci-flat conical K\"{a}hler metric in the K\"{a}hler class which belongs to the compactly supported cohomology group $H_{c}^{2}(\widetilde{Y},\mathbb{R})$. In \cite{GOTO} the following result was shown.

\begin{theorem}[\cite{GOTO}]
\label{Gototheorem}
Let $Y$ be an affine variety with only normal isolated singularity at $p \in Y$. We assume that the complement $Y \backslash \{p\}$ is biholomorphic to the cone $\mathscr{C}(M)$ of a Sasaki-Einstein manifold $M$ of real dimension $2n+1$. If there is a resolution of singularity $f \colon \widetilde{Y} \to Y$ with trivial canonical bundle $K_{\widetilde{Y}}$, then there is a Ricci-flat complete K\"{a}hler metric for every K\"{a}hler class of $\widetilde{Y}$.

\end{theorem}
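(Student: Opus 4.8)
The plan is to recast the conclusion as a solvability statement for a complex Monge-Amp\`ere equation on the complete non-compact manifold $\widetilde{Y}$, in the spirit of the asymptotically conical Calabi-Yau program (Tian-Yau type). The triviality of $K_{\widetilde{Y}}$ furnishes a nowhere-vanishing holomorphic $(n+1)$-form $\Omega$ on $\widetilde{Y}$; restricting to $\widetilde{Y} \setminus f^{-1}(p) \cong \mathscr{C}(M)$ and comparing with the parallel holomorphic volume form of the Ricci-flat K\"ahler cone $(\mathscr{C}(M),\omega_{\mathscr{C}})$ -- which is Ricci-flat precisely because $M$ is Sasaki-Einstein, see Proposition \ref{CYCONE} -- one identifies the model geometry at infinity and the normalization $\omega_{\mathscr{C}}^{\,n+1} = c_{n}\,\Omega \wedge \overline{\Omega}$ toward which the sought metric must converge.

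Fixing a K\"ahler class in $H^{2}(\widetilde{Y},\mathbb{R})$, I would first construct a reference K\"ahler metric $\omega_{0}$ in that class which agrees with a pullback of the cone metric $\omega_{\mathscr{C}}$ outside a compact set, obtained by gluing $\omega_{\mathscr{C}}$ along the conical end to an interior K\"ahler form through a cut-off of a local potential. The crepant condition $f^{\ast}K_{Y} = K_{\widetilde{Y}}$ is exactly what forces the volume forms to match asymptotically, so that $\mathrm{Ric}(\omega_{0})$ decays at the conical rate. One then searches for the Ricci-flat representative in the form $\omega = \omega_{0} + \sqrt{-1}\,\partial \overline{\partial} u$, equivalently solving
\[
\big(\omega_{0} + \sqrt{-1}\,\partial \overline{\partial} u\big)^{n+1} = e^{F}\,\omega_{0}^{\,n+1},
\]
where $e^{F}\omega_{0}^{\,n+1} = c_{n}\,\Omega \wedge \overline{\Omega}$ and $F$ decays at infinity. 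For classes in the image of $H^{2}_{c}(\widetilde{Y},\mathbb{R})$ this is precisely the content of Theorem \ref{resolutionvan}; the additional point here is to carry out the argument for an arbitrary K\"ahler class.

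I would solve the equation by the continuity method in weighted H\"older spaces adapted to the conical end, the weight recording the rate of decay toward $\omega_{\mathscr{C}}$. Openness follows from the implicit function theorem once the linearization -- essentially the Laplacian of the interpolating metric, mapping between the appropriate weighted spaces -- is shown to be an isomorphism, which relies on the Fredholm theory of the conical Laplacian and on a choice of decay weight avoiding the indicial roots determined by the spectrum of the link $M$. Closedness reduces to uniform a priori estimates along the continuity path: a $C^{0}$ bound, followed by the standard $C^{2}$ and higher-order (Evans-Krylov and Schauder) estimates, all in the weighted category, together with asymptotic control ensuring that the limiting metric is complete and asymptotic to the cone.

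The hard part is the a priori $C^{0}$ estimate and the accompanying decay control, since on a non-compact manifold one cannot integrate the equation over a closed manifold and invoke a Poincar\'e-Sobolev inequality as in Yau's compact argument. Instead I would exploit the Euclidean-type volume growth of the cone to obtain a Sobolev inequality on $\widetilde{Y}$, run a Moser iteration to bound $\|u\|_{C^{0}}$ in terms of weighted norms of $F$, and then sharpen this to the correct polynomial decay through a barrier argument built from the indicial roots of the conical Laplacian. Threading the estimate so that the solution remains in the prescribed weighted space -- thereby guaranteeing completeness and the conical asymptotics dictated by the Sasaki-Einstein link -- is the delicate step that ties the global analysis back to the geometry of $M$.
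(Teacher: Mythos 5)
The paper does not prove this statement: it is quoted verbatim from Goto \cite{GOTO} (and is closely related to \cite{COLON}, \cite{RESOLUTIONCOMPSUP}), and the present article only uses it as a black box in the proof of Theorem \ref{calabiresolution}. So there is no internal proof to compare against; I can only measure your proposal against what such a proof must actually accomplish. Your overall framework --- reduce to a complex Monge--Amp\`ere equation $(\omega_{0}+\sqrt{-1}\partial\overline{\partial}u)^{n+1}=e^{F}\omega_{0}^{n+1}$ with $e^{F}\omega_{0}^{n+1}=c_{n}\Omega\wedge\overline{\Omega}$, continuity method in weighted H\"older spaces, Fredholm theory for the conical Laplacian, Moser iteration off the Sobolev inequality furnished by Euclidean volume growth --- is indeed the standard asymptotically conical Calabi--Yau machinery and is the right skeleton.

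The genuine gap is in your construction of the reference metric, and it is precisely at the point where Goto's theorem goes beyond Theorem \ref{resolutionvan}. You propose an $\omega_{0}$ that \emph{agrees with the cone metric $\omega_{\mathscr{C}}$ outside a compact set}. But $\omega_{\mathscr{C}}=\tfrac{1}{2}d(r^{2}\overline{\eta})$ is exact on the end $\mathscr{C}(M)$, so any such $\omega_{0}$ restricts to the zero class on the end, and by the exact sequence $H^{2}_{c}(\widetilde{Y},\mathbb{R})\to H^{2}(\widetilde{Y},\mathbb{R})\to H^{2}(\mathscr{C}(M),\mathbb{R})$ its class lies in the image of $H^{2}_{c}(\widetilde{Y},\mathbb{R})$. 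Hence your gluing only ever produces compactly supported K\"ahler classes, i.e.\ exactly the case already settled by Theorem \ref{resolutionvan}; it cannot reach the classes that make Goto's statement new. These genuinely exist here: as Remark \ref{GOTOREMARK} notes, whenever $b_{2}(X_{P})>1$ (e.g.\ $K_{X_{B}}$ for a Borel subgroup) the restriction map $H^{2}(\widetilde{Y})\to H^{2}(M)$ is nonzero. To treat an arbitrary class one must enlarge the asymptotic model: write the restriction of the class to the end as the pullback of a closed $(1,1)$-form $\beta$ on the link (in the homogeneous case, an invariant combination $\sum c_{\alpha}\Omega_{\alpha}$ pulled back through the radial projection $\mathscr{C}(M)\to X_{P}$), take $\omega_{0}=\omega_{\mathscr{C}}+\beta+(\text{compactly supported correction})$, and observe that $\beta$ decays only like $r^{-2}$ relative to $g_{\mathscr{C}}$. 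The entire weighted analysis --- choice of decay rate, avoidance of indicial roots, the a priori estimates --- must then be carried out at this slower rate $-2$ rather than the fast rate available in the compactly supported case. Without this modification the proposal proves a strictly weaker statement. A secondary, smaller omission: you should also justify that the global holomorphic volume form $\Omega$ coming from the triviality of $K_{\widetilde{Y}}$ is asymptotic to the parallel conical volume form of $(\mathscr{C}(M),\omega_{\mathscr{C}})$; this uses normality of $Y$ at $p$ and the homogeneous decomposition of functions on the affine cone, and is needed before you can assert that $F$ decays.
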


The result above shows that the restricted condition of compactly supported cohomology in \ref{resolutionvan} is not necessary and it implies that the conjecture on the existence of complete Ricci-flat K\"{a}hler metrics in \cite{ADSCFT} is affirmative.

As we have seen so far, every Calabi-Yau cone $ (\mathscr{C}(\mathcal{Q}_{P} / \mathbb{Z}_{\ell}), \omega_{\mathscr{C}} = \frac{1}{2}d\Phi)$ is obtained from the total space of a negative line bundle $L(\mathcal{Q}_{P} / \mathbb{Z}_{\ell}) \in {\text{Pic}}(X_{P})$, namely, $\mathscr{C}(\mathcal{Q}_{P} / \mathbb{Z}_{\ell}) \cong L( \mathcal{Q}_{P} / \mathbb{Z}_{\ell})^{\times}$. In this case, if we consider the Cartan-Remmert reduction
\begin{equation}
\label{REMMERT}
\mathscr{R} \colon L(\mathcal{Q}_{P} / \mathbb{Z}_{\ell}) \to Y = \mathscr{C}(\mathcal{Q}_{P} / \mathbb{Z}_{\ell}) \cup \{o\},
\end{equation}
we obtain a resolution of the singularity $\{o\}$ which is a crepant resolution if and only if $\ell = I(X_{P})$, see for instance \cite{COLON}. Let us formalize this last statement.

\begin{proposition}
\label{remmert}
Let $X$ be a K\"{a}hler-Einstein Fano manifold and $L \in {\text{Pic}}(X)$ such that $L = K_{X}^{\otimes \frac{\ell}{I(X)}}$, for some $\ell \in \mathbb{Z}_{>0}$. Then the manifold defined by the total space of $L$ admits a global holomorphic volume form if and only if $\ell \ | \ I(X)$.
\end{proposition}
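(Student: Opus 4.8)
The plan is to reduce the statement to a computation in $\Pic(X)$ using the standard description of the canonical bundle of the total space of a line bundle. Throughout write $p\colon \Tot(L)\to X$ for the projection, $Z\subset \Tot(L)$ for the zero section (identified with $X$ via $\iota_{Z}$), and let $\mathcal{L}:=K_{X}^{\otimes\frac{1}{I(X)}}$ be the index root of the canonical bundle, so that $K_{X}=\mathcal{L}^{\otimes I(X)}$ and $L=\mathcal{L}^{\otimes\ell}$. Since $X$ is Fano we have $c_{1}(\mathcal{L})=-\frac{1}{I(X)}c_{1}(X)\neq 0$, and because $H^{1}(X,\mathscr{O}_{X})=0$ the Picard group injects into $H^{2}(X,\mathbb{Z})$; hence $\mathcal{L}^{\otimes d}\cong\mathscr{O}_{X}$ if and only if $d=0$. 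I read ``global holomorphic volume form'' in the sense relevant to the reduction $\mathscr{R}$: a global holomorphic section of $K_{\Tot(L)}$ that is nowhere vanishing on $\Tot(L)\setminus Z\cong\mathscr{C}(M)$. This is strictly weaker than triviality of $K_{\Tot(L)}$, which is the crepant condition $\ell=I(X)$ recorded above.

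First I would compute $K_{\Tot(L)}$. The vertical tangent sequence $0\to p^{\ast}L\to T\Tot(L)\to p^{\ast}TX\to 0$ (the vertical tangent bundle of the total space of a line bundle being $p^{\ast}L$) gives, on taking determinants, $K_{\Tot(L)}\cong p^{\ast}(K_{X}\otimes L^{-1})=p^{\ast}\mathcal{L}^{\otimes(I(X)-\ell)}$. Next I would record two structural facts. The zero section is the transverse vanishing locus of the tautological section $\tau\in H^{0}(\Tot(L),p^{\ast}L)$ (in a local frame $\tau$ is the fibre coordinate), so $\mathscr{O}_{\Tot(L)}(Z)\cong p^{\ast}L=p^{\ast}\mathcal{L}^{\otimes\ell}$; and the identity $p\circ\iota_{Z}=\mathrm{id}_{X}$ shows that $\iota_{Z}^{\ast}$ splits $p^{\ast}$, so $p^{\ast}\colon\Pic(X)\to\Pic(\Tot(L))$ is injective.

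For the ``only if'' direction I would argue with divisors. If $\Omega$ is such a volume form, then as a section of $K_{\Tot(L)}$ its zero divisor is effective and, by hypothesis, supported on $Z$; since $Z$ is irreducible this gives $\operatorname{div}(\Omega)=mZ$ for some integer $m\geq 0$, hence $K_{\Tot(L)}\cong\mathscr{O}_{\Tot(L)}(mZ)$. Combining this with the two isomorphisms above and using injectivity of $p^{\ast}$ yields $\mathcal{L}^{\otimes(I(X)-\ell)}\cong\mathcal{L}^{\otimes m\ell}$, i.e. $\mathcal{L}^{\otimes(I(X)-(m+1)\ell)}\cong\mathscr{O}_{X}$, and therefore $I(X)=(m+1)\ell$, which forces $\ell\mid I(X)$. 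Conversely, when $\ell\mid I(X)$ I would exhibit the form directly: with $m:=\frac{I(X)}{\ell}-1\geq 0$, the section $\tau^{\otimes m}$ is a global holomorphic section of $p^{\ast}L^{\otimes m}=p^{\ast}\mathcal{L}^{\otimes m\ell}=p^{\ast}\mathcal{L}^{\otimes(I(X)-\ell)}\cong K_{\Tot(L)}$ whose zeros lie only on $Z$, i.e. a volume form in the required sense (and it is nowhere vanishing, so $K_{\Tot(L)}$ is trivial, exactly when $m=0$, recovering the crepant case $\ell=I(X)$).

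The main obstacle, and essentially the only non-formal point, is fixing the meaning of ``volume form'' so that the answer is $\ell\mid I(X)$ rather than $\ell=I(X)$: one must allow the section to vanish along the exceptional zero section. Once that is settled, the proof is pure bookkeeping of line-bundle classes, with the two inputs $\mathscr{O}_{\Tot(L)}(Z)\cong p^{\ast}L$ and the injectivity of $p^{\ast}$ doing all the work. The ``only if'' direction is where a little care is needed, since one must rule out zeros of an arbitrary candidate section off $Z$; the divisor-class computation handles this at once.
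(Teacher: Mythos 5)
Your proof is correct, and it diverges from the paper's in a way worth noting. For the ``only if'' direction the two arguments are essentially the same bookkeeping in $\Pic$: the paper applies adjunction to the zero section together with the Poincar\'e--Lelong identification $c_{1}(\mathscr{O}([X]))={\text{PD}}[X]$ and restricts to $X$, whereas you compute $K_{\Tot(L)}\cong p^{\ast}(K_{X}\otimes L^{-1})$ from the relative tangent sequence and then use $\mathscr{O}(Z)\cong p^{\ast}L$ plus injectivity of $p^{\ast}$ on $\Pic$; both reduce to the same congruence $I(X)=(m+1)\ell$. The real difference is in the converse: the paper pulls back the nowhere-vanishing holomorphic volume form that exists on the Calabi--Yau cone $\mathscr{C}(M)\cong L^{\times}$ through the Cartan--Remmert reduction and extends it across the exceptional set using that $o$ is a rational singularity (citing Laufer and van Coevering), while you simply exhibit $\tau^{\otimes m}$ with $m=\frac{I(X)}{\ell}-1$ as an explicit section of $p^{\ast}L^{\otimes m}\cong K_{\Tot(L)}$ vanishing only along $Z$. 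Your construction is more elementary and self-contained --- it needs neither the Ricci-flat cone metric nor any extension theorem --- and it makes the vanishing order along the zero section manifest. You also correctly isolated the one genuinely delicate point, which the paper leaves implicit: ``global holomorphic volume form'' must mean a section of $K_{\Tot(L)}$ nowhere vanishing off the zero section (a bare nonzero section always exists, since $H^{0}(\Tot(L),p^{\ast}N)=\bigoplus_{k\geq 0}H^{0}(X,N\otimes L^{-k})$ has nonzero summands for large $k$; and a nowhere-vanishing one forces $\ell=I(X)$), and this reading is the one consistent with the paper's own converse, where the extended form vanishes to order $\frac{I(X)}{\ell}-1$ along $X$.
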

\begin{proof}
Consider the embedding defined by the zero section $\sigma_{0} \colon X \hookrightarrow L$. From the adjunction formula we have 

\begin{center}

$K_{X} = \big (K_{L} \otimes \mathscr{O}{([ X ])} \big )|_{X},$    
    
\end{center}
such that $[X] \in {\text{Div}}(L)$ and $\mathscr{O} \colon {\text{Div}}(L) \to {\text{Pic}}(L)$. Since $[X]$ is a smooth hypersurface, it follows that it defines an effective divisor, in fact, $X \subset L$ is an irreducible divisor. Therefore, $\exists \ 0 \neq  \sigma \in H^{0}(L,\mathscr{O}{([ X ])})$, and from Poincar\'{e}-Lelong formula \cite[p. 281]{VOISIN} we have $c_{1}(\mathscr{O}{([ X])}) = {\text{PD}}[X] \in H_{c}^{2}(L,\mathbb{Z})$. Hence, we get

\begin{center}

$c_{1}(K_{L})|_{X} = \bigg (\displaystyle \frac{I(X)}{\ell} - 1 \bigg ) {\text{PD}}[X],$

\end{center}
notice that $\sigma_{0}^{\ast}{\text{PD}}[X] = c_{1}(L)$. From this, we obtain 

$$ \exists \ 0 \neq \Omega \in H^{0}(L,K_{L}) \Longrightarrow \ell \ | \ I(X).$$
Conversely, we can pullback by the Cartan-Remmert reduction $\mathscr{R} \colon L \to \mathscr{C}(M) \cup \{o\}$  a nowhere vanishing holomorphic volume form $\widetilde{\Omega}$ defined on the cone $\mathscr{C}(M) = L^{\times}$, notice that  $\mathscr{C}(M)$ is a Calabi-Yau cone over the Sasaki-Einstein manifold defined by the principal circle bundle $M = Q(L)$. Since $o$ is a rational singularity of $\mathscr{C}(M) \cup \{o\}$, by extending $\mathscr{R}^{\ast}\widetilde{\Omega}$, see for instance \cite{LAUFER} and \cite{RESOLUTIONCOMPSUP}, if $\ell \ | \ I(X)$ we obtain $0 \neq \Omega \in H^{0}(L,K_{L})$ which vanishes to order $\frac{I(X)}{\ell} - 1$ along the
zero section $X \hookrightarrow L$. 
\end{proof}

From the result above we see that the Cartan-Remmert reduction \ref{REMMERT} defines a crepant resolution if and only if $\ell = I(X_{P})$. Hence, if $\ell = I(X_{P})$ we have a crepant resolution defined by
\begin{equation}
\mathscr{R} \colon K_{X_{P}} \to Y = \mathscr{C}(\mathcal{Q}_{P} / \mathbb{Z}_{I(X_{P})}) \cup \{o\}.
\end{equation}
For canonical bundles of K\"{a}hler-Einstein Fano manifolds Calabi \cite{CALABIANSATZ} constructed many examples of AC Ricci-flat K\"{a}hler manifolds which are in fact almost explicit.

\begin{theorem}[E. Calabi]
\label{CALABI3}
 Let $(X,\omega_{X})$ be a compact K\"{a}hler-Einstein manifold such that $c_{1}(X) > 0$, i.e. a K\"{a}hler-Einstein Fano manifold. Then there exists a complete Ricci-flat metric on the manifold defined by the total space $K_{X} = \det(T^{\ast}X)$. 
\end{theorem}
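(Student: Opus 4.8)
The plan is to produce the Ricci-flat metric on the total space of $K_X$ through the Calabi Ansatz, by which the complex Monge-Amp\`ere equation governing Ricci-flatness collapses to an explicitly integrable ODE. Since $X$ is Fano and K\"ahler-Einstein, the anticanonical bundle $K_X^{-1}$ is positive and hence $K_X$ is a negative line bundle; I equip $K_X$ with the Hermitian metric $H$ whose Chern curvature is the (suitably normalized) K\"ahler-Einstein form $\omega_X$, as in the discussion following Example \ref{EXAMPLESASAKIAN}. The first observation, which is the content of Proposition \ref{remmert} in the case $\ell = I(X)$, is that the total space of $K_X$ carries a nowhere-vanishing holomorphic $(n+1)$-form $\Omega$; concretely $\Omega = d\tau$, where $\tau$ is the tautological $n$-form (in a local trivialization with fibre coordinate $w$ one has $\tau = w\,dz_1 \wedge \cdots \wedge dz_n$). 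Thus $K_X$ has holomorphically trivial canonical bundle, and a K\"ahler metric $\omega$ is Ricci-flat precisely when it solves $\omega^{n+1} = c\,\Omega \wedge \overline{\Omega}$ for a real constant $c$.

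I would then impose the Ansatz. Let $t = H(u,u)$ be the fibrewise norm-squared function on $K_X$, a strictly plurisubharmonic exhaustion away from the zero section (cf. Subsection \ref{subsec5.1}), and seek a K\"ahler metric whose potential is a function $P(t)$, so that in the horizontal/vertical splitting induced by the Chern connection
\begin{equation*}
\omega = f(t)\,\pi^{\ast}\omega_X + g(t)\,\sqrt{-1}\,\vartheta \wedge \overline{\vartheta},
\end{equation*}
where $\vartheta$ is the connection $(1,0)$-form on the fibres (the analogue of $db_U + b_U A_U$ in \ref{ansatzcompact}) and $f,g$ are profiles determined by $P$. The decisive role of the hypotheses enters here: because $K_X$ carries the Einstein metric, the curvature appearing in $\partial\overline{\partial}t$ is a constant multiple of $\omega_X$, and the ${\rm U}(1)$-symmetry in the fibres forces both sides of $\omega^{n+1} = c\,\Omega \wedge \overline{\Omega}$ to depend on $X$ only through the fixed volume form $\omega_X^{n}$. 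The Monge-Amp\`ere equation therefore reduces to a single ODE in the variable $t$.

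Finally I would integrate this ODE. Expanding $\omega^{n+1}$ in the splitting produces a top form proportional to $f(t)^n g(t)\,\pi^{\ast}\omega_X^{n} \wedge \sqrt{-1}\,\vartheta \wedge \overline{\vartheta}$, while $\Omega \wedge \overline{\Omega}$ is a fixed positive multiple of the same form times a power of $t$; matching the two yields an equation of the form $\tfrac{d}{dt}\big(f(t)^{n+1}\big) = \mathrm{const}$, with solution $f(t) = (a t + b)^{\frac{1}{n+1}}$, which is exactly the factor appearing in \ref{ansatzcompact}. The genuinely delicate step, and the one I expect to be the main obstacle, is the boundary analysis needed to select the integration constants $a,b$: one must check simultaneously that the resulting $(1,1)$-form is positive-definite on all of $K_X$, that it extends smoothly across the zero section $X \hookrightarrow K_X$ (where the fibre coordinate degenerates and $\omega$ must restrict to a multiple of $\omega_X$ rather than blow up), and that the metric is complete. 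Completeness I would establish by showing that the radial length of curves escaping to infinity along the fibres diverges; the same asymptotic analysis as $t \to \infty$ identifies the metric with the cone metric $\omega_{\mathscr{C}}$ of Theorem \ref{main3}, so that the construction is compatible with Proposition \ref{resolutionhomogenous}. The formal reduction to the ODE is routine once the Ansatz is in place; the real work is this simultaneous control of smoothness at the zero section and completeness at infinity within a single one-parameter family of solutions.
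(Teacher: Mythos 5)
Your proposal is correct and follows essentially the same route as the paper, which defers the proof of this theorem to Calabi's original construction as presented in \cite{SALAMONANSATZ}: one trivializes $K_{K_X}$ by the differential of the tautological form, imposes the ${\rm U}(1)$-invariant Ansatz in the fibrewise norm $t$, and reduces the Monge--Amp\`ere equation $\omega^{n+1}=c\,\Omega\wedge\overline{\Omega}$ to an ODE whose solution $(at+b)^{\frac{1}{n+1}}$ is exactly the profile $(2\pi r^{2}+C)^{\frac{1}{n+1}}$ appearing in the paper's explicit formula of Theorem \ref{C8S8.3Sub8.3.2Teo8.3.5}. The checks you flag at the end (positivity, smooth extension across the zero section, completeness) are precisely the remaining content of the cited argument, so your outline matches the paper's proof in both structure and substance.
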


\begin{remark}
The proof of the result above which we are following can be found in \cite{SALAMONANSATZ}, see also \cite[Appendix D.2]{EDERTHESIS}. It is worth pointing out that in the context of the theorem above we have a nowhere vanishing parallel holomorphic $(n+1,0)$ form $\Omega = d\tau$, such that
\begin{center}

$\tau_{\nu}(X_{1},\ldots,X_{n}) = \nu(\pi_{\ast}X_{1},\ldots,\pi_{\ast}X_{n})$, \ \ (Tautological form)

\end{center}
$\forall \nu \in K_{X}$ and $X_{1},\ldots,X_{n} \in T_{\nu}K_{X}$, where $\pi \colon K_{X} \to X$ is the projection map.
\end{remark}

For the particular case of K\"{a}hler-Einstein Fano manifolds defined by complex flag manifolds $X_{P} = G^{\mathbb{C}}/P$, the Ricci-flat K\"{a}hler metric provided by Theorem \ref{CALABI3} has the following characterization. 

\begin{theorem}[\cite{EDER}]
\label{C8S8.3Sub8.3.2Teo8.3.5}
Let $(X_{P},\omega_{X_{P}})$ be a complex flag manifold associated to some parabolic Lie subgroup $P = P_{\Theta} \subset G^{\mathbb{C}}$, such that $\dim_{\mathbb{C}}(X_{P}) = n$. Then, the total space $K_{X_{P}}$ admits a complete Ricci-flat K\"{a}hler metric with K\"{a}hler form given by

\begin{equation}
\label{C8S8.3Sub8.3.2Eq8.3.15}
\omega_{CY} = (2\pi r^{2} + C)^{\frac{1}{n+1}} \Bigg ( \pi^{\ast}\omega_{X_{P}} - \frac{\sqrt{-1}}{n+1} \frac{\nabla b \wedge \overline{\nabla b}}{(2\pi r^{2} + C)} \Bigg ),
\end{equation}\\
where $C>0$ is some positive constant and $r^{2} \colon K_{X_{P}} \to \mathbb{R}_{\geq 0}$ is given by $r^{2}([g,b]) = |b|^{2}$, $\forall [g,b] \in K_{X_{P}}$. Furthermore, the K\"{a}hler form above is completely determined by the quasi-potential $\varphi \colon G^{\mathbb{C}} \to \mathbb{R}$ defined by
$$\varphi(g) = \displaystyle  \frac{1}{2\pi} \log \Big (\prod_{\alpha \in \Sigma \backslash \Theta} \big | \big |gv_{\omega_{\alpha}}^{+} \big | \big |^{2\langle \delta_{P},h_{\alpha}^{\vee} \rangle} \Big),$$
for every $g \in G^{\mathbb{C}}$. Therefore, $(K_{X_{P}},\omega_{CY})$ is a (complete) noncompact Calabi-Yau manifold with Calabi-Yau metric $\omega_{CY}$ completely determined by $\Theta \subset \Sigma$.
\end{theorem}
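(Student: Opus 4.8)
The plan is to realize the abstract existence assertion of Theorem~\ref{CALABI3} as the closed-form Calabi Ansatz metric \ref{C8S8.3Sub8.3.2Eq8.3.15}, and then to rewrite every ingredient through the representation-theoretic data furnished by Theorem~\ref{AZADBISWAS}. First I would verify the hypotheses: by Equation~\ref{canonicalmetric} together with the computation \ref{Cherncanonical} of $c_{1}(X_{P})$, the invariant form $\omega_{X_{P}}$ arising from the Azad--Biswas description satisfies ${\text{Ric}}(\omega_{X_{P}}) = 2\pi\omega_{X_{P}}$, so $(X_{P},\omega_{X_{P}})$ is a K\"{a}hler--Einstein Fano manifold with $c_{1}(X_{P}) > 0$. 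This places us squarely in the situation of Theorem~\ref{CALABI3}, and the task becomes to solve the Calabi Ansatz explicitly on the total space $K_{X_{P}}$.

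Next I would set up the ansatz. Equip $K_{X_{P}}$ with the Hermitian structure $H$ induced by $\omega_{X_{P}}$, whose local metric function is $\|s_{U}v_{\delta_{P}}^{+}\|^{2} = \mathrm{e}^{2\pi\varphi}$ with $\varphi$ the quasi-potential \ref{quasipotential}, and let $r^{2}$ be the associated fiberwise norm-squared. Because the Chern connection of $(K_{X_{P}},H)$ has curvature representing $c_{1}(K_{X_{P}}) = -[\omega_{X_{P}}]$, the function $\log r^{2}$ is a local K\"{a}hler potential for $2\pi\,\pi^{\ast}\omega_{X_{P}}$ off the zero section; moreover $\partial\log r^{2} = \nabla b/b$, where $\nabla b = db + b\,\partial\log H$ is the covariant differential of the tautological fiber coordinate (the form $A_{U} = \partial\log H$ of Theorem~\ref{Theo4}). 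Exploiting the $S^{1}$-symmetry of the fibers, I would look for a Ricci-flat K\"{a}hler form of the shape $\omega_{CY} = \sqrt{-1}\,\partial\overline{\partial}F(\log r^{2})$ for a single radial profile $F$, so that $\omega_{CY} = 2\pi F'\,\pi^{\ast}\omega_{X_{P}} + (\text{const})\,\frac{F''}{r^{2}}\sqrt{-1}\,\nabla b\wedge\overline{\nabla b}$.

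Then I would impose Ricci-flatness. Using the tautological holomorphic volume form $\Omega = d\tau$ of the remark following Theorem~\ref{CALABI3}, the condition ${\text{Ric}}(\omega_{CY}) = 0$ is equivalent to the complex Monge--Amp\`{e}re equation $\omega_{CY}^{\,n+1} = c\,\Omega\wedge\overline{\Omega}$. Expanding the top power and trading $\pi^{\ast}{\text{Ric}}(\omega_{X_{P}})$ for $2\pi\,\pi^{\ast}\omega_{X_{P}}$ via the K\"{a}hler--Einstein normalization reduces this to a first-order ODE of the form $(F')^{n}F'' = \text{const}\cdot r^{2}$; integrating it yields $2\pi F'(\log r^{2}) = (2\pi r^{2}+C)^{\frac{1}{n+1}}$ for a positive constant $C$, which reproduces both the prefactor $(2\pi r^{2}+C)^{\frac{1}{n+1}}$ and the coefficient $-\frac{\sqrt{-1}}{n+1}(2\pi r^{2}+C)^{\frac{1}{n+1}-1}$ of \ref{C8S8.3Sub8.3.2Eq8.3.15}. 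Completeness I would check by the standard change of variables near $r = \infty$ (where the radial part behaves like $dr^{2}$) and by smooth extension across the zero section, where $\omega_{CY}$ restricts to a multiple of $\omega_{X_{P}}$ on $X_{P}$. Inserting the local expression \ref{localform} of $\omega_{X_{P}}$ and $H = \mathrm{e}^{2\pi\varphi}$ then exhibits $\omega_{CY}$ as manifestly determined by $\varphi$, hence by the combinatorial datum $\Theta\subset\Sigma$, which is the final claim.

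The one genuinely delicate point is the reduction to the ODE: one must track carefully how $F'$ and $F''$ enter $\omega_{CY}^{\,n+1}$ and how the metric factor $H$ cancels, so that the fiber coefficient ends up depending on $r^{2}$ alone, and one must pin down the volume normalization implicit in $\Omega\wedge\overline{\Omega}$. Since Theorem~\ref{CALABI3} already guarantees existence and Ricci-flatness, however, this step amounts only to verifying that the explicit closed form solves the equation and is complete, after which the representation-theoretic rewriting via Theorem~\ref{AZADBISWAS} is routine bookkeeping.
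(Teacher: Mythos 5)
The paper does not actually prove this theorem: it is imported verbatim from \cite{EDER}, and the underlying construction (Theorem \ref{CALABI3}) is referred to \cite{SALAMONANSATZ} and \cite[Appendix D.2]{EDERTHESIS}, so there is no in-text argument to compare against. Your outline reconstructs exactly the standard derivation contained in those sources --- radial ansatz $F(\log r^{2})$ on $K_{X_{P}}^{\times}$, reduction of $\omega^{n+1}=c\,\Omega\wedge\overline{\Omega}$ to $(F')^{n}F''=\mathrm{const}\cdot r^{2}$ via the K\"ahler--Einstein normalization, integration, extension over the zero section, and completeness --- and it is correct as a plan. Two points to make explicit in a write-up: (i) you only need the implication ``constant right-hand side Monge--Amp\`ere $\Rightarrow$ Ricci-flat,'' since within the radial ansatz Ricci-flatness alone would a priori permit $\omega^{n+1}=c\,r^{2a}\,\Omega\wedge\overline{\Omega}$, with $a=0$ forced only by smoothness at the zero section; and (ii) the potential $F(\log r^{2})$ itself degenerates as $r\to 0$, so the extension must be argued at the level of the $(1,1)$-form, checking that $2\pi F'\to C^{1/(n+1)}>0$, that $F''/|b|^{2}$ extends smoothly and positively across $b=0$, and that $F'>0$, $F''>0$ everywhere so that $\omega_{CY}$ is genuinely K\"ahler.
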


 \begin{remark}
It is worthwhile to point out that in the result above the complete Ricci-flat K\"{a}hler metric induced by the K\"{a}hler form $\omega_{CY}$ is given by
\begin{center}
\label{C8S8.3Sub8.3.1Eq8.3.14}
$g_{CY} =  \displaystyle (2\pi r^{2} + C)^{\frac{1}{n+1}}  \Bigg ( \pi^{\ast}g_{X_{P}} + \frac{1}{n+1}\frac{{\text{Re}} \big (\nabla b \otimes \overline{\nabla b} \big )}{(2\pi r^{2} + C)} \Bigg )$,
\end{center}
see \cite{SALAMONANSATZ} for more details.
\end{remark}

\begin{remark}
\label{supportcalabi}
Notice that in the context of the last theorem we have $[\omega_{CY}] \in H_{c}^{2}(K_{X_{P}},\mathbb{R})$. In fact, since 
\begin{center}
$H_{c}^{3}(K_{X_{P}}) \cong H^{3}(K_{X_{P}},K_{X_{P}}^{\times}) \cong H^{1}(X_{P}) = \{0\}$,
\end{center}
we obtain the split exact sequence

\begin{center}
\begin{tikzcd}

0 \arrow[r] & H^{2}(K_{X_{P}},K_{X_{P}}^{\times}) \arrow[r, "\beta^{\ast}"] & H^{2}(K_{X_{P}}) \arrow[r, "\iota^{\ast}"] & H^{2}(K_{X_{P}}^{\times}) \arrow[r] & 0, 

\end{tikzcd}
\end{center}
where by considering $\Omega^{2}(K_{X_{P}},K_{X_{P}}^{\times}) = \Omega^{2}(K_{X_{P}}) \oplus \Omega^{1}(K_{X_{P}}^{\times})$ we have
\begin{center}
$\beta \colon \Omega^{2}(K_{X_{P}},K_{X_{P}}^{\times}) \to \Omega^{2}(K_{X_{P}})$, such that $\beta(a,b) = a$, 
\end{center}
and $\iota \colon K_{X_{P}}^{\times} \hookrightarrow K_{X_{P}}$. From this, since the map $\sigma_{0} \circ \pi \colon K_{X_{P}}^{\times} \to K_{X_{P}}$ is homotopic to the inclusion $\iota$, where $\sigma_{0} \colon X_{P} \hookrightarrow K_{X_{P}}$ denotes the zero section, it follows that 
\begin{center}
 $ \iota^{\ast}[\omega_{CY}] = \pi^{\ast}[\sigma_{0}^{\ast}\omega_{CY}] = C^{\frac{1}{n+1}}[\pi^{\ast}\omega_{X_{P}}] = -  C^{\frac{1}{n+1}}\pi^{\ast}c_{1}(K_{X_{P}})$.   
\end{center}
Therefore, once we have $c_{1}(K_{X_{P}}) = \sigma_{0}^{\ast} \mathcal{T}(1)$, where $\mathcal{T}(1) \in H_{c}^{2}(K_{X_{P}})$ denotes the Thom class of $K_{X_{P}}$, we obtain

\begin{center}
 
$\iota^{\ast}[\omega_{CY}] = - C^{\frac{1}{n+1}} (\sigma_{0} \circ \pi)^{\ast}\mathcal{T}(1) = - C^{\frac{1}{n+1}}  \iota^{\ast} \mathcal{T}(1).$
    
\end{center}
Thus, we have 

\begin{center}
$\iota^{\ast}[\omega_{CY}] =  - C^{\frac{1}{n+1}}  \iota^{\ast} \mathcal{T}(1) = - C^{\frac{1}{n+1}}  (\iota^{\ast} \circ \beta^{\ast})\big[\mathcal{T}(1),-\frac{\eta}{2\pi} \big] = 0,$    
\end{center}
such that $\frac{d\eta}{2\pi} = \pi^{\ast}\omega_{X_{P}}$, notice that $H^{2}(K_{X_{P}},K_{X_{P}}^{\times}) = \mathbb{R}[\mathcal{T}(1),-\frac{\eta}{2\pi}]$. Hence, since $\im(\beta^{\ast}) =\ker{(\iota^{\ast})}$ and $\im(\beta^{\ast}) \cong H_{c}^{2}(K_{X_{P}}) = \mathbb{R}\mathcal{T}(1)$, it follows that $[\omega_{CY}] \in H_{c}^{2}(K_{X_{P}})$. For more details about the ideas used above see for instance \cite{BOTT}. 
\end{remark}

In the last theorem we have $\omega_{X_{P}}$ as in \ref{localform} and the quasi-pontential $\varphi \colon G^{\mathbb{C}} \to \mathbb{R}$ as in \ref{quasipotential}. The $(1,1)$-form $\nabla b \wedge \overline{\nabla b}$ is obtained by patching together $\nabla b_{U} \wedge \overline{\nabla b_{U}}$ such that

\begin{center}
  
$\nabla b_{U} = db_{U} + b_{U} \pi^{\ast} A_{U},$   
    
\end{center}
where $(z_{U},b_{U}) \in K_{X_{P}} |_{U} \cong U \times \mathbb{C}$ are local coordinates and
\begin{equation}
\label{GAU}
A_{U} =  \displaystyle \partial \log \Bigg (\prod_{\alpha \in \Sigma \backslash \Theta} \big | \big |s_{U}v_{\omega_{\alpha}}^{+} \big | \big |^{2\langle \delta_{P},h_{\alpha}^{\vee} \rangle} \Bigg ),    
\end{equation}
for some local section $s_{U} \colon U \subset X_{P} \to G^{\mathbb{C}}$. As we have seen in the proof of Theorem \ref{main1}, we can write the gauge potential $A_{U}$ as 
\begin{equation}
\label{gaugepot}
A_{U} =  \displaystyle \partial \log  ||s_{U}v_{\delta_{P}}^{+} ||^{2},
\end{equation}
where $v_{\delta_{P}}^{+}$ denotes the highest weight vector of $V(\delta_{P})$. Therefore, the metric \ref{C8S8.3Sub8.3.2Eq8.3.15} can be (locally) described by

\begin{itemize}

    \item $\omega_{X_{P}}|_{U} = \displaystyle \frac{\overline{\partial} \partial  \log  ||s_{U}v_{\delta_{P}}^{+} ||^{2}}{2\pi \sqrt{-1}} $; \ \ (Horizontal component) \\

    \item $\nabla b_{U} = db_{U} + b_{U} \displaystyle \partial\log ||s_{U}v_{\delta_{P}}^{+} ||^{2}$. \ \ (Vertical component)
    
\end{itemize}
The key point which allows us to describe the cone metric on $\mathscr{C}(\mathcal{Q}_{P} / \mathbb{Z}_{I(X_{P})})$ and its resolution by means of the Calabi Ansatz metric on $K_{X_{P}}$ is the complete description of the Chern connection 
\begin{equation}
\nabla = d + \partial \log ||s_{U}v_{\delta_{P}}^{+} ||^{2},
\end{equation}
and the principal Cartan-Ehresmann connection (Yang-Mills field)

\begin{equation}
\sqrt{-1}\eta = \frac{1}{2} \big (\partial - \overline{\partial} \big ) \log ||s_{U}v_{\delta_{P}}^{+}||^{2} + \sqrt{-1}d\theta_{U}.
\end{equation}
As we can see, the connections above are both defined through of the gauge potential \ref{gaugepot}.

As observed in \cite{GOTO}, in the context of K\"{a}hler-Einstein Fano manifolds, the Ricci-flat K\"{a}hler metric on $K_{X}$ obtained from the Calabi Ansatz technique \ref{CALABI3} provides a resolution for the singular cone metric defined on the Calabi-Yau cone $K_{X}^{\times} \cong {\mathscr{C}}(Q(K_{X}))$ via Cartan-Remmert reduction. Therefore, from Proposition \ref{resolutionhomogenous} and Theorem \ref{C8S8.3Sub8.3.2Teo8.3.5} we obtain the following general result.

\begin{theorem}
\label{calabiresolution}
Let $(M,\eta,G)$ be a compact homogeneous contact manifold such that $M = \mathcal{Q}_{P} / \mathbb{Z}_{I(X_{P})}$, i.e., $M = Q(K_{X_{P}})$ for some parabolic Lie subgroup $P \subset G^{\mathbb{C}}$. Then, the Cartan-Remmert reduction $\mathscr{R} \colon K_{X_{P}} \to  Y = {\mathscr{C}}(M) \cup \{o\}$ provides a crepant resolution of the Calabi-Yau cone $({\mathscr{C}}(M), \omega_{\mathscr{C}})$ such that the complete Calabi-Yau metric $\omega_{CY}$ on $K_{X_{P}}$, defined by the Calabi Ansatz
\begin{equation}   
\omega_{CY} = \displaystyle (2\pi r^{2} + C)^{\frac{1}{n+1}} \Bigg (\omega_{X_{P}} - \frac{\sqrt{-1}}{n+1} \frac{( db_{U} + b_{U} A_{U})\wedge ( d\overline{b}_{U} + \overline{b}_{U}  \overline{A}_{U})}{(2\pi r^{2} + C)} \Bigg ),
\end{equation}
provides a resolution for the singular cone metric defined on $Y = {\mathscr{C}}(M) \cup \{o\}$ by
\begin{equation} 
\omega_{\mathscr{C}} = \displaystyle r dr \wedge \Bigg (\frac{\sqrt{-1}(\overline{A}_{U} - A_{U})}{2(n+1)} + \frac{d\theta_{U}}{n+1} \Bigg )  + \frac{\pi r^{2}}{n+1} \omega_{X_{P}},        
\end{equation}
such that $\omega_{X_{P}} = -\frac{\sqrt{-1}}{2\pi}dA_{U}$ and 
\begin{equation}
A_{U} =  \displaystyle \partial \log \big | \big |s_{U}v_{\delta_{P}}^{+} \big| \big |^{2},       
\end{equation}
for some local section $s_{U} \colon U \subset X_{P} \to G^{\mathbb{C}}$, where $v_{\delta_{P}}^{+}$ denotes the highest weight vector of weight $\delta_{P}$ associated to the irreducible $\mathfrak{g}^{\mathbb{C}}$-module $V(\delta_{P})$. Furthermore, there is a Ricci-flat complete K\"{a}hler metric for every K\"{a}hler class of $K_{X_{P}}$.

\end{theorem}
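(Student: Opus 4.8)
The plan is to assemble the explicit descriptions already obtained—the cone metric from Theorem \ref{main3}, Calabi's Ricci-flat metric from Theorem \ref{C8S8.3Sub8.3.2Teo8.3.5}, the crepancy criterion of Proposition \ref{remmert}, and the existence statement of Theorem \ref{Gototheorem}—and then to verify by direct computation that they glue together as claimed. First I would settle the crepant condition. Since $M = \mathcal{Q}_P/\mathbb{Z}_{I(X_P)} = Q(K_{X_P})$ corresponds under the line-bundle/circle-bundle dictionary to $L(M) = K_{X_P}$, we are exactly in the case $\ell = I(X_P)$ of Proposition \ref{remmert}. That proposition produces a nowhere-vanishing holomorphic volume form on the total space vanishing to order $\frac{I(X_P)}{\ell} - 1 = 0$ along the zero section, i.e.\ the total space $K_{X_P}$ has trivial canonical bundle, so the Cartan-Remmert reduction $\mathscr{R}\colon K_{X_P} \to Y = \mathscr{C}(M) \cup \{o\}$ satisfies the crepancy condition $\mathscr{R}^{\ast}K_Y = K_{K_{X_P}}$.

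Next I would produce the explicit singular cone metric by expanding $\omega_{\mathscr{C}} = \frac{1}{2}d\Phi$ with $\Phi$ as in Theorem \ref{main3} and $\ell = I(X_P)$, so that the fibre coefficient $\frac{I(X_P)}{\ell(n+1)}$ collapses to $\frac{1}{n+1}$. Writing $A_U = \partial\log\|s_U v_{\delta_P}^+\|^2$ and $\overline{A}_U = \overline{\partial}\log\|s_U v_{\delta_P}^+\|^2$, the term $(\partial-\overline{\partial})\log\|s_U v_{\delta_P}^+\|^2$ becomes $A_U - \overline{A}_U$, while the representation-theoretic identity $\|v_{\delta_P}^+\|^2 = \prod_{\alpha}\|v_{\omega_\alpha}^+\|^{2\langle\delta_P,h_\alpha^\vee\rangle}$ from the proof of Theorem \ref{main1}, combined with \ref{localform}, gives $\partial\overline{\partial}\log\|s_U v_{\delta_P}^+\|^2 = -2\pi\sqrt{-1}\,\omega_{X_P}$, hence $\omega_{X_P} = -\frac{\sqrt{-1}}{2\pi}dA_U$. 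Carrying out $\frac{1}{2}d\Phi$ and separating the $r\,dr$ component from the base component then yields exactly the stated formula for $\omega_{\mathscr{C}}$.

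Then I would specialize Calabi's metric. Theorem \ref{C8S8.3Sub8.3.2Teo8.3.5} already gives the complete Ricci-flat K\"ahler form on $K_{X_P}$ in terms of $\nabla b_U = db_U + b_U A_U$ and $\omega_{X_P}$; substituting $A_U = \partial\log\|s_U v_{\delta_P}^+\|^2$ from \ref{gaugepot} puts it in the displayed form. The crux is to show that this $\omega_{CY}$ genuinely \emph{resolves} the singular cone metric: under the biholomorphism $K_{X_P}^{\times} \cong \mathscr{C}(M)$ furnished by the Cartan-Remmert reduction, with the radial coordinates matched through the K\"ahler-potential identification $\frac{1}{2} r^2 = |b_U|^2\,\|s_U v_{\delta_P}^+\|^2$ established in Section \ref{sec4}, I would check that setting the constant $C = 0$ degenerates $\omega_{CY}$ term-by-term into $\omega_{\mathscr{C}}$, so that for $C>0$ the form $\omega_{CY}$ is a smooth K\"ahler metric on all of $K_{X_P}$ asymptotic to the cone metric. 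The precise all-order asymptotics, together with the fact that $[\omega_{CY}]\in H^2_c(K_{X_P},\mathbb{R})$ (Remark \ref{supportcalabi}), are supplied by Theorem \ref{resolutionvan} via Proposition \ref{resolutionhomogenous}.

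Finally, for the \emph{furthermore} clause I would invoke Theorem \ref{Gototheorem}. By Remark \ref{conevariety} the space $Y = \mathscr{C}(M)\cup\{o\}$ is an affine variety with a single normal isolated singularity whose complement is the cone over the Sasaki-Einstein manifold $M$, and the crepant resolution $\mathscr{R}\colon K_{X_P}\to Y$ has trivial canonical bundle by the first step; Goto's theorem then yields a Ricci-flat complete K\"ahler metric in every K\"ahler class of $K_{X_P}$, not merely the compactly supported one. I expect the genuinely delicate point to be the coordinate and normalization matching that identifies the $C\to 0$ limit of the Calabi Ansatz with $\omega_{\mathscr{C}}$; the deeper analytic inputs—existence and asymptotic decay to all orders—are imported wholesale from \cite{RESOLUTIONCOMPSUP} and \cite{GOTO}.
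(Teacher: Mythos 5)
Your proposal is correct and follows essentially the same route as the paper: it assembles Theorem \ref{main3} for the cone metric, Theorem \ref{C8S8.3Sub8.3.2Teo8.3.5} for the Calabi Ansatz, the $\ell = I(X_P)$ case of Proposition \ref{remmert} for crepancy, Theorem \ref{resolutionvan} for the asymptotic resolution statement, and Theorem \ref{Gototheorem} (via Remark \ref{conevariety}) for the final clause. The only difference is that you propose an additional explicit check that the $C\to 0$ degeneration of the Calabi Ansatz recovers $\omega_{\mathscr{C}}$ after the radial reparametrization, which the paper omits in favor of citing \cite{RESOLUTIONCOMPSUP} directly.
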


\begin{proof}
This result follows from the following facts: 

\begin{itemize}

\item By applying Theorem \ref{C8S8.3Sub8.3.2Teo8.3.5} on 

\begin{center}
$K_{X_{P}} \to (X_{P},\omega_{X_{P}})$,
\end{center}
we get the Calabi-Yau metric $[\omega_{CY}] \in H_{c}^{2}(K_{X_{P}},\mathbb{R})$ provided by the Calabi Ansatz technique, see Remark \ref{supportcalabi};

\item From the Boothby-Wang fibration

\begin{center}
$(\mathcal{Q}_{P} / \mathbb{Z}_{I(X_{P})}, \frac{1}{n+1}\eta) \to (X_{P}, \frac{\pi}{n+1}\omega_{X_{P}})$, 
\end{center}
we obtain a Calabi-Yau metric $\omega_{\mathscr{C}}$ on the cone $\mathscr{C}(\mathcal{Q}_{P} / \mathbb{Z}_{I(X_{P})})$ just like in Theorem \ref{main3}.
\end{itemize}
Since the Cartan-Remmert reduction 
\begin{center}
$\mathscr{R} \colon K_{X_{P}} \to Y = \mathscr{C}(\mathcal{Q}_{P} / \mathbb{Z}_{I(X_{P})}) \cup \{o\},$ 
\end{center}
defines a crepant resolution for the cone $Y = \mathscr{C}(\mathcal{Q}_{P} / \mathbb{Z}_{I(X_{P})}) \cup \{o\}$, from Theorem \ref{resolutionvan} it follows that the metric $\omega_{CY}$ obtained from the Calabi Ansatz provides a resolution for the cone metric $\omega_{\mathscr{C}}$. Moreover, since $Y =  \mathscr{C}(\mathcal{Q}_{P} / \mathbb{Z}_{I(X_{P})}) \cup \{o\}$ is an affine variety, see Remark \ref{conevariety}, from Theorem \ref{Gototheorem} we have that there is a Ricci-flat complete K\"{a}hler metric in every K\"{a}hler class of $K_{X_{P}}$. 
\end{proof}

As we can see in Theorem \ref{calabiresolution}, the gauge potential \ref{gaugepot} plays an important role in our approach. Moreover, the result above allows us to describe explicitly a huge class of examples which illustrate the existence part of Conjecture \ref{conj}. As we will see in the next subsection, the last result provides a constructive method to describe the resolution of Calabi-Yau metrics defined on certain Calabi-Yau cones over homogeneous Sasaki-Einstein manifolds.

\subsection{Examples of resolved Calabi-Yau cones via Lie theory}
\label{subsec5.2}
This subsection is devoted to describe how the result of Theorem \ref{calabiresolution} can be applied in concrete cases. The first example which we describe below covers a huge class of important manifolds obtained from maximal parabolic Lie subgroups (e.g. minuscule flag manifolds). 

\begin{example}[Basic model]
\label{maxparabolic}
As in Subsection \ref{basiccase}, consider $X_{P_{\omega_{\alpha}}} = G^{\mathbb{C}}/P_{\omega_{\alpha}}$, where $P_{\omega_{\alpha}} \subset G^{\mathbb{C}}$ is a maximal parabolic Lie subgroup. As we have seen, in this case we have

\begin{center}
    
$\mathscr{P}(X_{P_{\omega_{\alpha}}}, {\rm{U}}(1)) = \mathbb{Z}\mathrm{e}(\mathcal{Q}_{P_{\omega_{\alpha}}}) \ \ {\text{and}} \ \  K_{X_{P_{\omega_{\alpha}}}}^{-1} = L_{\chi_{\omega_{\alpha}}}^{\otimes\langle \delta_{P_{\omega_{\alpha}}},h_{\alpha}^{\vee} \rangle},$
    
\end{center}
thus we obtain $I(X_{P_{\omega_{\alpha}}}) = \langle \delta_{P_{\omega_{\alpha}}},h_{\alpha}^{\vee} \rangle$. From this, we have 

\begin{center}
    
$Q(K_{X_{P_{\omega_{\alpha}}}}) = \mathcal{Q}_{P_{\omega_{\alpha}}} / \mathbb{Z}_{\langle \delta_{P_{\omega_{\alpha}}},h_{\alpha}^{\vee} \rangle}.$
    
\end{center}
It follows from Theorem \ref{main3} that the Calabi-Yau cone metric $\omega_{\mathscr{C}} = \frac{1}{2}d\Phi$ on $\mathscr{C}( \mathcal{Q}_{P_{\omega_{\alpha}}} / \mathbb{Z}_{\langle \delta_{P_{\omega_{\alpha}}},h_{\alpha}^{\vee} \rangle})$ is determined by

\begin{center}

$\Phi = \displaystyle \frac{\langle \delta_{P_{\omega_{\alpha}}},h_{\alpha}^{\vee} \rangle r^{2}}{2 (n+1)\sqrt{-1}}\big ( \partial - \overline{\partial} \big )\log \big | \big |s_{U}v_{\omega_{\alpha}}^{+} \big| \big |^{2} + \frac{r^{2}}{n+1}d\theta_{U},$

\end{center}
notice that in this case we have $v_{\delta_{P_{\omega_{\alpha}}}}^{+} = (v_{\omega_{\alpha}}^{+})^{\otimes \langle \delta_{P_{\omega_{\alpha}}},h_{\alpha}^{\vee} \rangle}$. Hence, we obtain

\begin{center}
    
$\displaystyle \omega_{\mathscr{C}} = r dr \wedge \Bigg (\frac{\langle \delta_{P_{\omega_{\alpha}}},h_{\alpha}^{\vee} \rangle \big ( \partial - \overline{\partial} \big )\log  ||s_{U}v_{\omega_{\alpha}}^{+} ||^{2}}{2(n+1)\sqrt{-1}} + \frac{d\theta_{U}}{n+1} \Bigg )  + \frac{ r^{2} \langle \delta_{P_{\omega_{\alpha}}},h_{\alpha}^{\vee} \rangle \overline{\partial} \partial\log ||s_{U}v_{\omega_{\alpha}}^{+} ||^{2}}{2(n+1)\sqrt{-1}},$

\end{center}
which defines a singular metric on the cone $\mathscr{C}( \mathcal{Q}_{P_{\omega_{\alpha}}} / \mathbb{Z}_{\langle \delta_{P_{\omega_{\alpha}}},h_{\alpha}^{\vee} \rangle}) \cup \{o\}$.

Now, by taking the crepant resolution provided by the Cartan-Remmert reduction, we obtain a resolution for the singular cone metric above given by the Calabi Ansatz 
    
\begin{center}
    
$\omega_{CY} = \displaystyle (2\pi r^{2} + C)^{\frac{1}{n+1}} \Bigg (\omega_{X_{P_{\omega_{\alpha}}}} - \frac{\sqrt{-1}}{n+1} \frac{\nabla b_{U} \wedge \overline{\nabla b}_{U}}{(2\pi r^{2} + C)} \Bigg ),$
\end{center}
such that $C > 0$ is some positive constant and

\begin{itemize}

\item $\omega_{X_{P_{\omega_{\alpha}}}} = \displaystyle \frac{\langle \delta_{P_{\omega_{\alpha}}},h_{\alpha}^{\vee} \rangle  \overline{\partial} \partial \log \big | \big |s_{U}v_{\omega_{\alpha}}^{+} \big| \big |^{2}}{2\pi \sqrt{-1}}$,

\item $\nabla b_{U} = d b_{U} + \langle \delta_{P_{\omega_{\alpha}}},h_{\alpha}^{\vee} \rangle b_{U}\partial \log \big | \big |s_{U}v_{\omega_{\alpha}}^{+} \big| \big |^{2}.$
    
\end{itemize}
Thus, the Calabi-Yau manifold $(K_{X_{P_{\omega_{\alpha}}}},\omega_{CY})$ provides a resolution for the singular cone over the Sasaki-Einstein manifold $Q(K_{X_{P_{\omega_{\alpha}}}}) = \mathcal{Q}_{P_{\omega_{\alpha}}} / \mathbb{Z}_{I(X_{P_{\omega_{\alpha}}})}$.

\begin{figure}[h]
\centering
\includegraphics[scale=0.5]{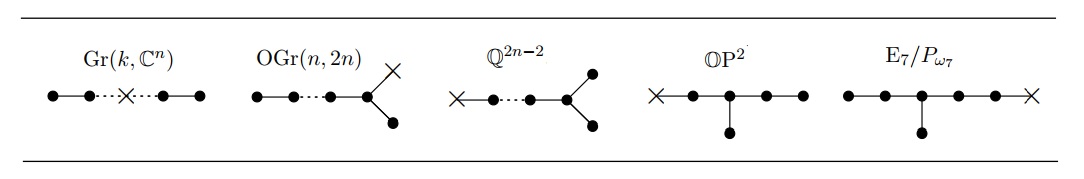}
\caption{Minuscule flag manifolds associated to maximal parabolic Lie subgroups.}
\end{figure}
\end{example}

\begin{remark}
\label{GOTOREMARK}
It is worth pointing out that for $X_{P_{\omega_{\alpha}}}$ as in the example above we have
\begin{center}
\begin{tikzcd}

0 \arrow[r] & H_{c}^{2}(K_{X_{P_{\omega_{\alpha}}}}) \arrow[r, "\beta^{\ast}"] & H^{2}(K_{X_{P_{\omega_{\alpha}}}}) \arrow[r, "\iota^{\ast}"] & H^{2}(Q(K_{X_{P_{\omega_{\alpha}}}})) \arrow[r] & 0, 

\end{tikzcd}
\end{center}
see Remark \ref{supportcalabi}. Thus, since 

\begin{center}

$1 = b_{2}(X_{P_{\omega_{\alpha}}}) = \dim H^{2}(X_{P_{\omega_{\alpha}}}) = \dim H^{2}(K_{X_{P_{\omega_{\alpha}}}}),$

\end{center}
it follows that $H^{2}(K_{X_{P_{\omega_{\alpha}}}},\mathbb{R}) = \mathbb{R}[\omega_{CY}]$, i.e. any K\"{a}hler class is cohomologous to the Calabi Ansatz K\"{a}hler form $\omega_{CY}$. We also observe that for the case which $P = P_{\Theta}$, such that $\# (\Sigma \backslash \Theta) > 1$, we have 
\begin{center}

$ \# (\Sigma \backslash \Theta) = b_{2}(X_{P}) = \dim H^{2}(X_{P}) = \dim H^{2}(K_{X_{P}}).$

\end{center}
Hence, from \cite[Example 6.3]{GOTO} there is a K\"{a}hler class which does not belong to the compactly supported cohomology group of $K_{X_{P}}$. Thus, we obtain a huge class of examples of Ricci-flat K\"{a}hler metrics defined on canonical bundles of flag manifolds associated to parabolic subgroups which satisfy $\# (\Sigma \backslash \Theta) > 1$, e.g. $K_{X_{B}}$, where $B \subset G^{\mathbb{C}}$ is a Borel subgroup. 

\end{remark}
Let us provide some particular examples of the ideas explored above in the setting of minuscule flag manifolds associated to ${\rm{SL}}(n+1,\mathbb{C})$.
\begin{example}[Calabi-Yau cone over $ \mathbb{R}\mathbb{P}^{3} = S^{3}/\mathbb{Z}_{2}$]
Consider $G^{\mathbb{C}} = {\rm{SL}}(2,\mathbb{C})$ and $X_{B} = {\rm{SL}}(2,\mathbb{C})/B$ as in Example \ref{HOPFBUNDLE}. As we have seen previously, in this case we have $X_{B} = \mathbb{C}{\rm{P}}^{1}$ and

\begin{center}
$T\mathbb{C}{\rm{P}}^{1} = K_{\mathbb{C}{\rm{P}}^{1}}^{-1} = L_{\chi_{\omega_{\alpha}}}^{ \otimes \langle \delta_{B},h_{\alpha}^{\vee} \rangle},$    
\end{center}
such that $I(\mathbb{C}{\rm{P}}^{1}) = \langle \delta_{B},h_{\alpha}^{\vee} \rangle = 2$. Now, since $\mathscr{P}(\mathbb{C}{\rm{P}}^{1},{\rm{U}}(1)) = \mathbb{Z}\mathrm{e}(Q(\omega_{\alpha}))$, where $Q(-\omega_{\alpha}) = \mathcal{Q}_{B}$ and $\mathcal{Q}_{B} = S^{3}$, it follows that

\begin{center}
$Q(K_{\mathbb{C}{\rm{P}}^{1}}) = S^{3}/\mathbb{Z}_{2} =  \mathbb{R}\mathbb{P}^{3}.$    
\end{center}
By considering the opposite big cell $U =  N^{-}x_{0} \subset X_{B}$ and the local section $s_{U} \colon U \subset \mathbb{C}{\rm{P}}^{1} \to {\rm{SL}}(2,\mathbb{C})$ defined by

\begin{center}

$s_{U}(nx_{0}) = n$, \ \ $\forall n \in N^{-}$,
\end{center}
since $V(\omega_{\alpha}) = \mathbb{C}^{2}$ and $v_{\omega_{\alpha}}^{+} = e_{1}$, see Example \ref{exampleP1}, it follows that 
\begin{center}
$A_{U} = 2\partial \log \big (1 + |z|^{2} \big) = \displaystyle \frac{2\overline{z}dz}{1 + |z|^{2}}.$
\end{center}
Thus, from Theorem \ref{calabiresolution} we have the Calabi-Yau metric $\omega_{\mathscr{C}}$ on $\mathscr{C}( \mathbb{R}\mathbb{P}^{3})$ given by

\begin{center}
    
$\omega_{\mathscr{C}} = \displaystyle rdr \wedge \Bigg ( \frac{\overline{z}dz - zd\overline{z}}{2\sqrt{-1}(1 + |z|^{2})} + \frac{d\theta_{U}}{2} \Bigg ) + \frac{r^{2}d\overline{z} \wedge dz}{2\sqrt{-1}(1 + |z|^{2})^{2}}.$
    
\end{center}
The metric above defines a singular metric on $\mathscr{C}( \mathbb{R}\mathbb{P}^{3}) \cup \{o\}$ with conical singularity at $r = 0$. By considering the Cartan-Remmert reduction of $K_{\mathbb{C}{\rm{P}}^{1}} = T^{\ast}\mathbb{C}{\rm{P}}^{1}$, we obtain a resolution of the metric above provided by the Calabi Ansatz metric $\omega_{CY}$ such that 

\begin{center}
    
$\omega_{CY} = \displaystyle \sqrt{ 2\pi r^{2} + C} \Bigg (   \frac{d\overline{z} \wedge dz}{\pi \sqrt{-1}(1+|z|^{2})^{2}} - \frac{\Big ( d\overline{b}_{U} + \frac{\overline{b}_{U} zd \overline{z}}{(1 + |z|^{2} )}\Big ) \wedge \Big ( db_{U} + \frac{b_{U} \overline{z}dz}{(1 + |z|^{2} )}\Big )}{2\sqrt{-1}(2\pi r^{2} + C)} \Bigg ).$    
    
\end{center}
It is worth pointing out that the metric above is also asymptotically locally Euclidean. In fact, we have $K_{\mathbb{C}{\rm{P}}^{1}} = {\text{Bl}}_{0}(\mathbb{C}^{2}/\mathbb{Z}_{2})$, i.e., the canonical bundle $K_{\mathbb{C}{\rm{P}}^{1}}$ can be seen as the blow-up of $\mathbb{C}^{2}/\mathbb{Z}_{2}$ at $0$. Thus, we have

\begin{center}
    
$\mathscr{C}(S^{3}/\mathbb{Z}_{2}) \cong K_{\mathbb{C}{\rm{P}}^{1}}^{\times} \cong \big (\mathbb{C}^{2}/\mathbb{Z}_{2} \big ) \backslash \{0\}.$
    
\end{center}
Now, let $R = |z_{1}|^{2} + |z_{2}|^{2}$ be a smooth function on $\mathbb{C}^{2}$ and $F_{s} \colon  (\mathbb{C}^{2}/\mathbb{Z}_{2} \big ) \backslash \{0\} \to \mathbb{R}$ such that 

\begin{center}
    
$ F_{s}(R) = \displaystyle R \sqrt{1 + \frac{s^{2}}{R^{2}}} + s \log \Bigg ( \frac{R}{s + \sqrt{s^{2} + R^{2}}}\Bigg ),$
    
\end{center}
where $0 < s \leq 1$. Then, we have that $\omega_{s} = \sqrt{-1} \partial \overline{\partial}F_{s}$ can be smoothly extended in order to define a complete asymptotically locally Euclidean (ALE) Ricci-flat K\"{a}hler-Einstein metric on $K_{\mathbb{C}{\rm{P}}^{1}} = {\text{Bl}}_{0}(\mathbb{C}^{2}/\mathbb{Z}_{2})$,  called the Eguchi-Hanson metric \cite{EGUCHIHANSON}, \cite[Example 7.2.2]{JOYCE}. Hence, we have that $\omega_{s}$ converges smoothly to the flat metric $\omega_{0}$, namely 

\begin{center}
    
$\omega_{s} \sim \sqrt{-1} \partial \overline{\partial} \Big ( |z_{1}|^{2} + |z_{2}|^{2} \Big ),$
    
\end{center}
when $s \to 0$ and $R \gg 0$. For other constructions on $\mathscr{O}(-k) \to \mathbb{C}{\rm{P}}^{1}$, $\forall k \geq 1$, see for instance \cite{LEBRUN}.

\end{example}

\begin{example}[Calabi-Yau cone over the Lens space $S^{2n+1}/\mathbb{Z}_{n+1}$] The same ideas of the previous example can be generalized to $\mathbb{C}{\rm{P}}^{n}$. Consider $G^{\mathbb{C}} = {\rm{SL}}(n+1,\mathbb{C})$ and $P = P_{\omega_{\alpha_{1}}}$, see Example \ref{examplePn}.

As we have seen in Example \ref{COMPLEXHOPF}, in this case we have

\begin{center}
    
$X_{P_{\omega_{\alpha_{1}}}} = \mathbb{C}{\rm{P}}^{n}$ \ \  and  \ \ $\mathscr{P}(\mathbb{C}{\rm{P}}^{n},{\rm{U}}(1)) = \mathbb{Z}\mathrm{e}(\mathcal{Q}_{P_{\omega_{\alpha_{1}}}})$,
    
\end{center}
such that $\mathcal{Q}_{P_{\omega_{\alpha_{1}}}} = Q(\mathscr{O}(-1))$. Since $K_{\mathbb{C}{\rm{P}}^{n}}^{\otimes \frac{1}{n+1}} = \mathscr{O}(-1)$ and $\mathcal{Q}_{P_{\omega_{\alpha_{1}}}} = S^{2n+1}$, it follows that 

\begin{center}
    
$Q(K_{\mathbb{C}{\rm{P}}^{n}}) = S^{2n+1}/\mathbb{Z}_{n+1}$.    
    
\end{center}
Therefore, we have that the Cartan-Remmert reduction 

\begin{center}
    
$\mathscr{R} \colon K_{\mathbb{C}{\rm{P}}^{n}} \to \mathscr{C}(S^{2n+1}/\mathbb{Z}_{n+1}) \cup \{o\},$

\end{center}
provides a crepant resolution for the singular Calabi-Yau cone over $S^{2n+1}/\mathbb{Z}_{n+1}$. From this, we can apply Theorem \ref{calabiresolution} in order to describe the singular Ricci-flat K\"{a}hler metric $\omega_{\mathscr{C}}$ on $\mathscr{C}(S^{2n+1}/\mathbb{Z}_{n+1}) \cup \{o\}$ and its resolution $\omega_{CY}$ provided by the Calabi Ansatz metric on $K_{\mathbb{C}{\rm{P}}^{n}}$. 

By taking a local section $s_{U} \colon U \subset \mathbb{C}{\rm{P}}^{n} \to {\rm{SL}}(n+1,\mathbb{C})$ on the opposite big cell $U =  R_{u}(P_{\omega_{\alpha_{1}}})^{-}x_{0} \subset \mathbb{C}{\rm{P}}^{n}$, such that 
$$s_{U}(nx_{0}) = n \in {\rm{SL}}(n+1,\mathbb{C}),$$ 
since $V(\omega_{\alpha_{1}}) = \mathbb{C}^{n+1}$,  $v_{\omega_{\alpha_{1}}}^{+} = e_{1}$ and $I(\mathbb{C}{\rm{P}}^{n}) = n+1$, it follows that 

\begin{center}

$A_{U} =  (n+1)\displaystyle \partial \log \Big (1 + \sum_{l = 1}^{n}|z_{l}|^{2} \Big ).$
    
\end{center}
Hence, the singular cone metric $\omega_{\mathscr{C}}$ can be expressed by 

\begin{center}
    
$\omega_{\mathscr{C}} =  \displaystyle r dr \wedge \Bigg (\frac{\big ( \partial - \overline{\partial} \big ) \log \big (1 +  \sum_{l = 1}^{n}|z_{l}|^{2} \big )}{2\sqrt{-1}} + \frac{d\theta_{U}}{n+1} \Bigg )  + \frac{ r^{2}\overline{\partial} \partial\log \big (1 + \sum_{l = 1}^{n}|z_{l}|^{2} \big )}{2\sqrt{-1}},$

\end{center}
and the resolution for the singular cone metric above is given by the Calabi Ansatz 
    
\begin{center}
    
$\omega_{CY} = \displaystyle (2\pi r^{2} + C)^{\frac{1}{n+1}} \Bigg (\omega_{\mathbb{C}{\rm{P}}^{n}} - \frac{\sqrt{-1}}{n+1} \frac{\nabla b_{U} \wedge \overline{\nabla b}_{U}}{(2\pi r^{2} + C)} \Bigg ),$
\end{center}
where $C > 0$ is some positive constant and

\begin{itemize}

\item $\omega_{\mathbb{C}{\rm{P}}^{n}} = \displaystyle \frac{(n+1)  \overline{\partial} \partial  \log \Big (1 + \sum_{l = 1}^{n}|z_{l}|^{2} \Big )}{2\pi \sqrt{-1}}$,\\

\item $\nabla b_{U} = d b_{U} + (n+1) b_{U} \displaystyle \partial \log \Big (1 + \sum_{l = 1}^{n}|z_{l}|^{2} \Big ).$
    
\end{itemize}
Thus, we obtain a resolution for the singular cone over $S^{2n+1}/\mathbb{Z}_{n+1}$ provided by the Calabi-Yau manifold $(K_{\mathbb{C}{\rm{P}}^{n}},\omega_{CY})$. Further results on scalar flat metrics and other constructions on $\mathscr{O}(-k) \to \mathbb{C}{\rm{P}}^{n}$, $\forall k \geq n$, can be found in \cite{PEDERSEN}, \cite{abreu}.

\end{example}

\begin{example}[Calabi-Yau cone over $\mathscr{V}_{2}(\mathbb{R}^{6})/\mathbb{Z}_{4}$] As we have seen in Examples \ref{grassmanian} and \ref{STIEFEL}, if we consider $G^{\mathbb{C}} = {\rm{SL}}(4,\mathbb{C})$ and $P = P_{\omega_{\alpha_{2}}}$, it follows that 

\begin{center}
$X_{P_{\omega_{\alpha_{2}}}} = {\rm{Gr}}(2,\mathbb{C}^{4})$ \ \ and \ \ $\mathscr{P}({\rm{Gr}}(2,\mathbb{C}^{4}),{\rm{U}}(1)) = \mathbb{Z}\mathrm{e}(\mathcal{Q}_{P_{\omega_{\alpha_{2}}}}),$
\end{center}
such that $\mathcal{Q}_{P_{\omega_{\alpha_{2}}}} = Q(\mathscr{O}_{\alpha_{2}}(-1)) = \mathscr{V}_{2}(\mathbb{R}^{6})$. From Example \ref{grassmanian} we have $K_{{\rm{Gr}}(2,\mathbb{C}^{4})}^{\otimes \frac{1}{4}} = \mathscr{O}_{\alpha_{2}}(-1)$, thus
\begin{center}
    
$Q(K_{{\rm{Gr}}(2,\mathbb{C}^{4})}) = \mathscr{V}_{2}(\mathbb{R}^{6})/\mathbb{Z}_{4}$.    
    
\end{center}
Since in this case we have  $V(\omega_{\alpha_{2}}) = \bigwedge^{2}(\mathbb{C}^{4})$ and $v_{\omega_{\alpha_{2}}}^{+} =  e_{1} \wedge e_{2}$, the gauge potential $A_{U}$ over the opposite big cell $U = R_{u}(P_{\omega_{\alpha_{2}}})^{-}x_{0}$ is given by 

\begin{center}

$A_{U} = \displaystyle 4 \partial \log \Big ( 1 + \sum_{k = 1}^{4}|z_{k}|^{2} + \bigg |\det \begin{pmatrix}
 z_{1} & z_{3} \\
 z_{2} & z_{4}
\end{pmatrix} \bigg |^{2} \Big).$
\end{center}
Hence, from Theorem \ref{calabiresolution} and Example \ref{maxparabolic} we have the singular Calabi-Yau metric defined on the cone over $\mathscr{V}_{2}(\mathbb{R}^{6})/\mathbb{Z}_{4}$ given by

$\omega_{\mathscr{C}} = r dr \wedge \Bigg (2 \frac{ \displaystyle \big ( \partial - \overline{\partial} \big )\log \Big ( 1 + \sum_{k = 1}^{4}|z_{k}|^{2} + \bigg |\det \begin{pmatrix}
 z_{1} & z_{3} \\
 z_{2} & z_{4}
\end{pmatrix} \bigg |^{2} \Big)}{5\sqrt{-1}} + \displaystyle \frac{d\theta_{U}}{5} \Bigg ) $
$$ + \ \ \displaystyle \frac{2r^{2}}{5\sqrt{-1}} \overline{\partial} \partial\log \Big ( 1 + \sum_{k = 1}^{4}|z_{k}|^{2} + \bigg |\det \begin{pmatrix}
 z_{1} & z_{3} \\
 z_{2} & z_{4}
\end{pmatrix} \bigg |^{2} \Big).$$

By applying Theorem \ref{calabiresolution} we obtain a resolution for the singular metric above provided by the Calabi Ansatz 

\begin{center}
$\omega_{CY} = \displaystyle (2\pi r^{2} + C)^{\frac{1}{5}} \Bigg (\omega_{{\rm{Gr}}(2,\mathbb{C}^{4})} - \frac{\sqrt{-1}}{5} \frac{\nabla b_{U} \wedge \overline{\nabla b}_{U}}{(2\pi r^{2} + C)} \Bigg ),$
\end{center}
such that 
\begin{itemize}

\item $\omega_{{\rm{Gr}}(2,\mathbb{C}^{4})} =  \displaystyle \frac{2 \sqrt{-1}}{\pi}  \partial \overline{\partial} \log \Big (1+ \sum_{k = 1}^{4}|z_{k}|^{2} + \bigg |\det \begin{pmatrix}
 z_{1} & z_{3} \\
 z_{2} & z_{4}
\end{pmatrix} \bigg |^{2} \Big)$,

\item $\nabla b_{U} = db_{U} + 4 b_{U} \displaystyle \partial \log \Big (1+ \sum_{k = 1}^{4}|z_{k}|^{2} + \bigg |\det \begin{pmatrix}
 z_{1} & z_{3} \\
 z_{2} & z_{4}
\end{pmatrix} \bigg |^{2} \Big)$.
\end{itemize}
Hence, we have that $(K_{{\rm{Gr}}(2,\mathbb{C}^{4})},\omega_{CY})$ provides a resolution for the singular Calabi-Yau cone obtained from $(\mathscr{C}(\mathscr{V}_{2}(\mathbb{R}^{6})/\mathbb{Z}_{4}),\omega_{\mathscr{C}})$.

\end{example}

\begin{example}[Calabi-Yau cone $K_{{\rm{Gr}}(k,\mathbb{C}^{n+1})}^{\times}$] 
\label{grassexample}
The previous example can be generalized as follows. Consider $G^{\mathbb{C}} = {\rm{SL}}(n+1,\mathbb{C})$, by fixing the Cartan subalgebra $\mathfrak{h} \subset \mathfrak{sl}(n+1,\mathbb{C})$ given by diagonal matrices whose the trace is equal to zero, we have the set of simple roots given by
$$\Sigma = \Big \{ \alpha_{l} = \epsilon_{l} - \epsilon_{l+1} \ \Big | \ l = 1, \ldots,n\Big\},$$
here $\epsilon_{l} \colon {\text{diag}}\{a_{1},\ldots,a_{n+1} \} \mapsto a_{l}$, $ \forall l = 1, \ldots,n+1$. Therefore, the set of positive roots is given by
$$\Pi^+ = \Big \{ \alpha_{ij} = \epsilon_{i} - \epsilon_{j} \ \Big | \ i<j  \Big\}. $$
In this case we consider $\Theta = \Sigma \backslash \{\alpha_{k}\}$ and $P = P_{\omega_{\alpha_{k}}}$, thus we have

\begin{center}

${\rm{SL}}(n+1,\mathbb{C})/P_{\omega_{\alpha_{k}}} = {\rm{Gr}}(k,\mathbb{C}^{n+1}).$

\end{center}
A straightforward computation shows that 

\begin{center}

$I({\rm{Gr}}(k,\mathbb{C}^{n+1})) = n+1,$ \ \  and  \ \ $\mathscr{P}({\rm{Gr}}(k,\mathbb{C}^{n+1}),{\rm{U}}(1)) = \mathbb{Z}\mathrm{e}(\mathcal{Q}_{P_{\omega_{\alpha_{k}}}}),$

\end{center}
which imples that

\begin{center}

$Q(K_{{\rm{Gr}}(k,\mathbb{C}^{n+1})}) = \mathcal{Q}_{P_{\omega_{\alpha_{k}}}}/\mathbb{Z}_{n+1}.$

\end{center}
Since we have $V(\omega_{\alpha_{k}}) = \bigwedge^{k}(\mathbb{C}^{n+1})$ and $v_{\omega_{\alpha_{k}}}^{+} = e_{1} \wedge \ldots \wedge e_{k}$, by taking the coordinate neighborhood $U =  R_{u}(P_{\omega_{\alpha_{k}}})^{-}x_{0} \subset {\rm{Gr}}(k,\mathbb{C}^{n+1})$, such that 

\begin{center}

$Z \in \mathbb{C}^{(n+1-k)k} \mapsto n(Z)x_{0} = \begin{pmatrix}
 \ 1_{k} & 0_{k,n+1-k} \\
 Z & 1_{n+1-k}
\end{pmatrix}x_{0},$

\end{center}
here we identified $\mathbb{C}^{(n+1-k)k} \cong {\rm{M}}_{n+1-k,k}(\mathbb{C})$, we can take the local section $s_{U} \colon U \subset {\rm{Gr}}(k,\mathbb{C}^{n+1})\to {\rm{SL}}(n+1,\mathbb{C})$ defined by

\begin{center}

$s_{U}(n(Z)x_{0}) = n(Z) = \begin{pmatrix}
 \ 1_{k} & 0_{k,n+1-k} \\
 Z & 1_{n+1-k}
\end{pmatrix}.$

\end{center}
From the data above we obtain the gauge potential 

\begin{center}

$A_{U} = (n+1)\partial \log \Bigg (\sum_{I} \bigg | \det_{I} \begin{pmatrix}
 \ 1_{k} \\
 Z 
\end{pmatrix} \bigg |^{2} \Bigg ),$

\end{center}
where the sum above is taken over all $k \times k$ submatrices whose the lines are labeled by $I = \{i_{1} < \ldots < i_{k}\} \subset \{1, \ldots, n+1\}$. Thus, we have the singular metric on the cone $\mathscr{C}(\mathcal{Q}_{P_{\omega_{\alpha_{k}}}}/\mathbb{Z}_{n+1}) \cup \{o\}$ given by
    
$\omega_{\mathscr{C}} = \displaystyle r dr \wedge \Bigg (\frac{(n+1)}{2((n+1-k)k + 1)\sqrt{-1}} \big ( \partial - \overline{\partial} \big )\log \textstyle{\Bigg (\sum_{I} \bigg | \det_{I} \begin{pmatrix}
 \ 1_{k} \\
 Z 
\end{pmatrix} \bigg |^{2} \Bigg )} + \displaystyle \frac{d\theta_{U}}{(n+1-k)k + 1} \Bigg ) $ 

$$ + \ \frac{ r^{2}(n+1)}{2((n+1-k)k + 1)\sqrt{-1}}\overline{\partial} \partial \log \textstyle{ \Bigg (\sum_{I} \bigg | \det_{I} \begin{pmatrix}
 \ 1_{k} \\
 Z 
\end{pmatrix} \bigg |^{2} \Bigg )}.$$
From this, the Calabi Ansatz metric on $K_{{\rm{Gr}}(k,\mathbb{C}^{n+1})}$ defined by

\begin{center}

$\omega_{CY} = \displaystyle (2\pi r^{2} + C)^{\frac{1}{(n+1-k)k + 1}} \Bigg (\omega_{{\rm{Gr}}(k,\mathbb{C}^{n+1})} - \frac{\sqrt{-1}}{(n+1-k)k + 1} \frac{\nabla b_{U} \wedge \overline{\nabla b}_{U}}{(2\pi r^{2} + C)} \Bigg ),$

\end{center}
such that 

\begin{itemize}

\item $\omega_{{\rm{Gr}}(k,\mathbb{C}^{n+1})} = \displaystyle \frac{(n+1)}{2\pi \sqrt{-1}}\overline{\partial} \partial\log \textstyle{ \Bigg (\sum_{I} \bigg | \det_{I} \begin{pmatrix}
 \ 1_{k} \\
 Z 
\end{pmatrix} \bigg |^{2} \Bigg )}$,

\item $\nabla b_{U} = d b_{U} + (n+1) b_{U}\partial \log \Bigg (\sum_{I} \bigg | \det_{I} \begin{pmatrix}
 \ 1_{k} \\
 Z 
\end{pmatrix} \bigg |^{2} \Bigg ),$
    
\end{itemize}
provides a resolution for the singular cone metric $\omega_{\mathscr{C}}$. 
\end{example}

As we have seen the examples which we have described so far are given by maximal parabolic subgroups of ${\rm{SL}}(n+1,\mathbb{C})$. In what follows we provide examples of maximal flag manifolds.

\begin{example}[Calabi-Yau cone over $X_{1,1}/\mathbb{Z}_{2} $] Consider $G^{\mathbb{C}} = {\rm{SL}}(3,\mathbb{C})$ and $P_{\emptyset} = B$ (Borel subgroup). In this case we have the Wallach flag manifold

\begin{center}

$X_{B} = {\rm{SL}}(3,\mathbb{C})/B = {\rm{SU}}(3)/T^{2}.$

\end{center}
By keeping the notation of the previous example, we have that $\Sigma = \{\alpha_{1},\alpha_{2}\}$ and 

\begin{center}

$\Pi^{+} = \big \{ \alpha_{1},\alpha_{2}, \alpha_{1} + \alpha_{2}\big \}$,

\end{center}
thus we obtain $\delta_{B} = 2\alpha_{1} + 2\alpha_{2}$. A straightforward computation shows that $I(X_{B}) = 2$ and

\begin{center}
$\mathscr{P}(X_{B},{\rm{U}}(1)) = \mathbb{Z}\mathrm{e}(Q(\omega_{\alpha_{1}})) \oplus \mathbb{Z}\mathrm{e}(Q(\omega_{\alpha_{2}})).$
\end{center}
Hence, we have 

\begin{center}

$\mathcal{Q}_{B} = Q(-\omega_{\alpha_{1}}) + Q(-\omega_{\alpha_{2}}) = {\rm{SU}}(3)/{\rm{U}}(1) = X_{1,1},$

\end{center}
the manifold $X_{1,1}$ is an example of Aloff-Wallach space \cite{ALOFF}. Therefore, from the last comments we obtain 

\begin{center}

$Q(K_{X_{B}}) = X_{1,1}/\mathbb{Z}_{2}.$

\end{center}
In order to compute the cone metric $\omega_{\mathscr{C}}$ on $\mathscr{C}(X_{1,1}/\mathbb{Z}_{2})$ we observe that in this case we have

\begin{center}

$V(\omega_{\alpha_{1}}) = \mathbb{C}^{3}$ \ \  and \ \ $V(\omega_{\alpha_{2}}) = \bigwedge^{2}(\mathbb{C}^{3}),$

\end{center}
where $v_{\omega_{\alpha_{1}}}^{+} = e_{1}$ and $v_{\omega_{\alpha_{2}}}^{+} = e_{1} \wedge e_{2}$. Now, we consider the opposite big cell $U = R_{u}(B)^{-}x_{0} \subset X_{B}$ such that 

\begin{center}

$U = \Bigg \{ \begin{pmatrix}
1 & 0 & 0 \\
z_{1} & 1 & 0 \\                  
z_{2}  & z_{3} & 1
 \end{pmatrix}x_{0} \ \Bigg | \ z_{1},z_{2},z_{3} \in \mathbb{C} \Bigg \}$.

\end{center}
By taking the local section $s_{U} \colon U \subset X_{B} \to {\rm{SL}}(3,\mathbb{C})$, such that $s_{U}(nx_{0}) = n$, a straightforward computation shows that the gauge potential \ref{GAU} is given by

\begin{center}

$A_{U} = \displaystyle 2 \partial \log \bigg ( 1 + \sum_{i = 1}^{2}|z_{i}|^{2} \bigg ) + 2 \partial \log \bigg (1 + |z_{3}|^{2} + \bigg | \det \begin{pmatrix}
z_{1} & 1  \\                  
z_{2}  & z_{3} 
 \end{pmatrix} \bigg |^{2} \bigg ).$

\end{center}
From the expression above we obtain the following formula for $\omega_{\mathscr{C}}$

$\omega_{\mathscr{C}} = r dr \wedge \Bigg ( \frac{ \big ( \partial - \overline{\partial} \big ) \log \bigg [ \bigg ( 1 + \displaystyle \sum_{i = 1}^{2}|z_{i}|^{2} \bigg ) \bigg (1 + |z_{3}|^{2} + \bigg | \det \begin{pmatrix}
z_{1} & 1  \\                  
z_{2}  & z_{3} 
 \end{pmatrix} \bigg |^{2} \bigg ) \bigg ]}{4\sqrt{-1}} + \displaystyle \frac{d\theta_{U}}{4} \Bigg ) $
 
$$ + \ \ \displaystyle \frac{r^{2}}{4\sqrt{-1}} \overline{\partial} \partial\log \bigg [ \bigg ( 1 + \sum_{i = 1}^{2}|z_{i}|^{2} \bigg ) \bigg (1 + |z_{3}|^{2} + \bigg | \det \begin{pmatrix}
z_{1} & 1  \\                  
z_{2}  & z_{3} 
 \end{pmatrix} \bigg |^{2} \bigg ) \bigg ].$$
Therefore, from the Cartan-Remmert reduction we obtain a crepant resolution for the singular cone $(\mathscr{C}(X_{1,1}/\mathbb{Z}_{2}),\omega_{\mathscr{C}})$ provided by $(K_{{\rm{SU}}(3)/T^{2}},\omega_{CY})$ such that 

\begin{center}    
$\omega_{CY} = \displaystyle (2\pi r^{2} + C)^{\frac{1}{4}} \Bigg (\omega_{{\rm{SU}}(3)/T^{2}} - \frac{\sqrt{-1}}{4} \frac{\nabla b_{U} \wedge \overline{\nabla b}_{U}}{(2\pi r^{2} + C)} \Bigg ),$
\end{center}
with 
\begin{itemize}

\item $\omega_{{\rm{SU}}(3)/T^{2}} = \displaystyle \frac{1}{\pi \sqrt{-1}} \overline{\partial} \partial\log \bigg [ \bigg ( 1 + \sum_{i = 1}^{2}|z_{i}|^{2} \bigg ) \bigg (1 + |z_{3}|^{2} + \bigg | \det \begin{pmatrix}
z_{1} & 1  \\                  
z_{2}  & z_{3} 
 \end{pmatrix} \bigg |^{2} \bigg ) \bigg ],$
 
\item $\nabla b_{U} = d b_{U} + 2 b_{U}\partial\log \bigg [ \bigg ( 1 + \displaystyle \sum_{i = 1}^{2}|z_{i}|^{2} \bigg ) \bigg (1 + |z_{3}|^{2} + \bigg | \det \begin{pmatrix}
z_{1} & 1  \\                  
z_{2}  & z_{3} 
 \end{pmatrix} \bigg |^{2} \bigg ) \bigg ].$ 

\end{itemize}
It is worth pointing out that from Remark \ref{GOTOREMARK} we obtain
\begin{center}
$ 2 = b_{2}({\rm{SU}}(3)/T^{2}) = \dim H^{2}({\rm{SU}}(3)/T^{2}) = \dim H^{2}(K_{{\rm{SU}}(3)/T^{2}}).$
\end{center}
Thus, from \cite[Example 6.3]{GOTO} we have a K\"{a}hler class which does not belong to the compactly supported cohomology group of $K_{{\rm{SU}}(3)/T^{2}}$.
\end{example}

\begin{example}[Calabi-Yau cone $K_{{\rm{SU}}(n+1)/T^{n}}^{\times}$] The previous example can be easily generalized. In fact, consider the Lie-theoretical data for the Lie group ${\rm{SL}}(n+1,\mathbb{C})$ as in Example \ref{grassexample}. By taking $P = B \subset {\rm{SL}}(n+1,\mathbb{C})$ (Borel subgroup), we obtain

\begin{center}

$X_{B} = {\rm{SL}}(n+1,\mathbb{C})/B = {\rm{SU}}(n+1)/T^{n}$,

\end{center}
notice that in this case we have $\Theta = \emptyset$. Now, a straightforward computation shows that 

\begin{center}

$\mathscr{P}(X_{B},{\rm{U}}(1)) = \displaystyle \bigoplus_{l = 1}^{n} \mathbb{Z}\mathrm{e}(Q(\omega_{\alpha_{l}})).$

\end{center}
In order to compute the metrics of Theorem \ref{calabiresolution}, we observe that in this case we have
\begin{center}

$V(\omega_{\alpha_{l}}) = \bigwedge^{l}(\mathbb{C}^{n+1})$, \ \ $v_{\omega_{\alpha_{l}}}^{+} = e_{1} \wedge \ldots \wedge e_{l},$

\end{center}
for $l = 1,\ldots,n$. Let $U = R_{u}(B)^{-}x_{0} \subset X_{B}$ be the opposite big cell. For this particular example this open set is parameterized by the holomorphic coordinates 
$$n = \begin{pmatrix}
  1 & 0 & 0 & \cdots & 0 \\
  z_{21} & 1 & 0 & \cdots & 0  \\
  z_{31} & z_{32} & 1 &\cdots & 0 \\
  \vdots & \vdots & \vdots & \ddots & \vdots \\
  z_{n+1,1} & z_{n+1,2} & z_{n+1,3} & \cdots & 1 
 \end{pmatrix},$$
where $ n = n^{-}(z) \in N^{-}$ and $z = (z_{ij}) \in \mathbb{C}^{\frac{n(n+1)}{2}}$. We define for each subset $I = \{i_{1} < \cdots < i_{k}\} \subset \{1,\cdots,n+1\}$, with $1 \leq k \leq n$, the polynomial function $\det_{I} \colon {\rm{SL}}(n+1,\mathbb{C}) \to \mathbb{C}$, such that 

$$\textstyle{\det_{I}}(g) = \det \begin{pmatrix}
  g_{i_{1}1} & g_{i_{1}2} & \cdots & g_{i_{1} k} \\
  g_{i_{2}1} & g_{i_{2}2} & \cdots & g_{i_{2} k}  \\
  \vdots & \vdots& \ddots & \vdots \\
  g_{i_{k}1} & g_{i_{k}2} & \cdots & g_{i_{k}k} 
 \end{pmatrix},$$
for every $g \in {\rm{SL}}(n+1,\mathbb{C})$. From this, we have for every $g \in {\rm{SL}}(n+1,\mathbb{C})$ that 

\begin{center}

$g(e_{1} \wedge \ldots \wedge e_{l}) =  \displaystyle \sum_{i_{1} < \cdots < i_{l}} \textstyle{\textstyle{\det_{I}}}(g)e_{i_{1}} \wedge \ldots \wedge e_{i_{l}},$

\end{center}
notice that the sum above is taken over $I = \{i_{1} < \cdots < i_{l}\} \subset \{1,\cdots,n+1\}$, with $1 \leq l \leq n$. By taking the local section $s_{U} \colon U \subset X_{B} \to {\rm{SL}}(n+1,\mathbb{C})$, such that $s_{U}(n^{-}(z)x_{0}) = n^{-}(z)$, we obtain the gauge potential

\begin{center}

$A_{U} = \displaystyle \sum_{l = 1}^{n} \langle \delta_{B},h_{\alpha_{l}}^{\vee} \rangle \partial \log \Bigg (  \sum_{i_{1} < \cdots < i_{l}} \bigg | \textstyle{\det_{I}} \begin{pmatrix}
  1 & 0 & \cdots & 0 \\
  z_{21} & 1 & \cdots & 0  \\
  \vdots & \vdots & \ddots & \vdots \\
  z_{n+1,1} & z_{n+1,2} & \cdots & 1 
 \end{pmatrix} \bigg |^{2}\Bigg).$ 

\end{center}
For the sake of simplicity we shall denote

\begin{center}

$A_{U} = \displaystyle \partial \log \Bigg ( \prod_{l = 1}^{n} \Big (  \sum_{i_{1} < \cdots < i_{l}} \big | \textstyle{\det_{I}} \big (n^{-}(z) \big ) \big |^{2}\Big)^{\langle \delta_{B},h_{\alpha_{l}}^{\vee} \rangle}\Bigg),$ 

\end{center}
to stand for the previous expression. Hence, by applying Theorem \ref{calabiresolution} we obtain the cone metric $\omega_{\mathscr{C}}$ such that

$\omega_{\mathscr{C}} = \displaystyle r dr \wedge \Bigg (\frac{(\overline{\partial}-\partial)\log \Bigg (\displaystyle \prod_{l = 1}^{n} \Big (  \sum_{i_{1} < \cdots < i_{l}} \big | \textstyle{\det_{I}} \big (n^{-}(z) \big ) \big |^{2}\Big)^{\langle \delta_{B},h_{\alpha_{l}}^{\vee} \rangle}\Bigg)}{\sqrt{-1}(n(n+1)+2)} + \frac{2d\theta_{U}}{n(n+1)} \Bigg )$    
    
$$ + \ \ \displaystyle \frac{r^{2}}{\sqrt{-1}(n(n+1)+2)} {\overline{\partial}}\partial \log \Bigg ( \prod_{l = 1}^{n} \Big (  \sum_{i_{1} < \cdots < i_{l}} \big | \textstyle{\det_{I}} \big (n^{-}(z) \big ) \big |^{2}\Big)^{\langle \delta_{B},h_{\alpha_{l}}^{\vee} \rangle}\Bigg),$$
the metric above defines a Calabi-Yau metric on the cone $K_{{\rm{SU}}(n+1)/T^{n}}^{\times}$. From the Cartan-Remmert reduction we obtain a resolution for the singular Calabi-Yau cone $K_{{\rm{SU}}(n+1)/T^{n}}^{\times} \cup \{o\}$ provided by $(K_{{\rm{SU}}(n+1)/T^{n}},\omega_{CY})$ where

\begin{center}    
$\omega_{CY} = \displaystyle (2\pi r^{2} + C)^{\frac{2}{n(n+1)+2}} \Bigg (\omega_{{\rm{SU}}(n+1)/T^{n}} - \frac{2\sqrt{-1}}{n(n+1)+2} \frac{\nabla b_{U} \wedge \overline{\nabla b}_{U}}{(2\pi r^{2} + C)} \Bigg ),$
\end{center}
such that
\begin{itemize}

\item $\omega_{{\rm{SU}}(n+1)/T^{n}} = \displaystyle \frac{1}{2\pi \sqrt{-1}} \overline{\partial} \partial\log \Bigg (\displaystyle \prod_{l = 1}^{n} \Big (  \sum_{i_{1} < \cdots < i_{l}} \big | \textstyle{\det_{I}} \big (n^{-}(z) \big ) \big |^{2}\Big)^{\langle \delta_{B},h_{\alpha_{l}}^{\vee} \rangle}\Bigg),$
 
\item $\nabla b_{U} = d b_{U} + b_{U}\partial\log \Bigg (\displaystyle \prod_{l = 1}^{n} \Big (  \sum_{i_{1} < \cdots < i_{l}} \big | \textstyle{\det_{I}} \big (n^{-}(z) \big ) \big |^{2}\Big)^{\langle \delta_{B},h_{\alpha_{l}}^{\vee} \rangle}\Bigg).$ 

\end{itemize}
Thus, we obtain an explicit description for the resolution of the Calabi-Yau cone, which has the Sasaki-Einstein manifold $Q(K_{{\rm{SU}}(n+1)/T^{n}})$ as a link of isolated singularity, given by $(K_{{\rm{SU}}(n+1)/T^{n}},\omega_{CY})$. It is worthwhile to observe that, according to Remark \ref{GOTOREMARK}, we have  

\begin{center}

$ n = \# (\Sigma) = \dim H^{2}({\rm{SU}}(n+1)/T^{n}) = \dim H^{2}(K_{{\rm{SU}}(n+1)/T^{n}}).$

\end{center}
Hence, in this case, we obtain a family of Ricci-flat K\"{a}hler metrics on $K_{{\rm{SU}}(n+1)/T^{n}}$ which does not belong to its compactly supported cohomology group. Therefore, this example also provides a concrete realization for \cite[Example 6.3]{GOTO}.

\end{example}

The examples described in this section provide a huge class of concrete nontrivial examples for the existence part of Conjecture \ref{conj}. Many of the computations which we have done for homogeneous contact manifolds associated to ${\rm{SL}}(n+1,\mathbb{C})$ also can be done for other classical groups, namely, $\rm{SO}(n,\mathbb{C})$ and ${\rm{Sp}}(2n,\mathbb{C})$, see for instance \cite{EDER} to some computations of the Calabi Ansatz metric in low dimensional cases. 

For Lie groups associated to exceptional Lie algebras the computation becomes highly nontrivial. The main reason, in this case, is that we do not have a manageable matrix realization for the associated Lie algebras, thus we can not directly derive a suitable local expression for connections and gauge potentials involved in our computations.

\section*{Acknowledgement}
The author would like to thank the anonymous reviewers for their helpful and constructive comments that greatly contributed to improving the final version of the paper.

\end{document}